\documentclass[11pt]{amsart}

%%%%%%%%%%%%%%%%%%%%%%%%%%%%%%%%%%%%%%%%%%%
%%%%%%%%%%%%%%%%%%%%%%%%%%%%%%%%%%%%%%%%%%%

% PACKAGES
\usepackage{colonequals}
\usepackage[autostyle, english = american]{csquotes}
\usepackage{dirtytalk}
\usepackage{makecell}
\usepackage{subcaption}
\captionsetup[subfigure]{labelfont=rm,labelformat=parens}
\usepackage{subfiles}
\usepackage{comment}
\usepackage{float}
\usepackage{marginnote}
\usepackage{tabu}
\usepackage{euscript}
\usepackage{mathdots}
\usepackage[dvipsnames]{xcolor}
\usepackage{graphicx}			
\usepackage{amssymb}
\usepackage{mathrsfs}
\usepackage{amsthm}
\usepackage{amsmath}
\usepackage{stmaryrd}
\usepackage{tikz}
\usepackage{tikz-cd}
\usepackage{accents}
\usepackage{upgreek}
\usepackage{enumerate}
\usepackage{bm}
\usepackage{mathtools}
\usetikzlibrary{patterns}
\usepackage[all]{xy}
\usepackage{caption}
\usepackage{url}
\usepackage{float}
\usepackage{todonotes} %\setuptodonotes{color=purple!30}
\usepackage{colonequals}
\usepackage{bbm}
\usepackage{longtable}
\usepackage[full]{textcomp}
\usepackage[cal=cm]{mathalfa}
\usepackage{xparse}
\usepackage{comment}
\usepackage{cite}
\usepackage{enumitem}

%\def\rest#1{\big|_{#1}}

%TIKZ LIBRARIES  
\usetikzlibrary{calc}
\usetikzlibrary{fadings}
\usetikzlibrary{decorations.pathmorphing}
\usetikzlibrary{decorations.pathreplacing}
\usepackage{tikz,tikz-cd,tikz-3dplot}
\usepackage{pgfplots}
\usetikzlibrary{arrows,shadows,positioning, calc, decorations.markings, 
	hobby,quotes,angles,decorations.pathreplacing,intersections,shapes}
%\usepgflibrary{shapes.geometric}
\usetikzlibrary{fillbetween,backgrounds}
%\usetikzlibrary{hobby}

% INACTIVE PACKAGES
%\usepackage[T1]{fontenc}
%\usepackage[utf8]{inputenc}
%\usepackage{amsfonts}
%\usepackage[hebrew,english]{babel}
%\usepackage[headings]{fullpage}
%\usepackage{pdfcomment}
%\usepackage{makeidx}
%\makeindex
%\usepackage{moreverb}
%\usepackage{mathtools}
%\usepackage{marginnote}
%\usepackage{pifont}
%\usepackage{pictexwd,dcpic}
%\usepackage{setspace}
%\usepackage{lastpage}
%\usepackage{natbib}
%\usepackage[backend=bibtex,style=alpha]{biblatex}
%\addbibresource{Bibliography.bib}
%\usepackage{marginnote}
%\setlength{\marginparwidth}{1in}
%\newcommand{\marginparstretch}{0.6}
%\let\oldmarginpar\marginpar
%\renewcommand\marginpar[1]{\-\oldmarginpar[\framebox{\setstretch{\marginparstretch}\begin{minipage}{\marginparwidth}{\raggedleft\tiny #1}\end{minipage}}]{\framebox{\setstretch{\marginparstretch}\begin{minipage}{\marginparwidth}{\raggedright\tiny #1}\end{minipage}}}}

% CUSTOM COMMAND FOR "STEP" ENVIRONMENT

\newcounter{step}
\newcommand{\proofstep}[1]{\refstepcounter{step}\noindent\emph{\underline{\smash{Step~\thestep: #1.}}}}

%%%%%%%%%%%%%%%%%%%%%%%%%%%%%%%%%%%%%%%%%%%
%%%%%%%%%%%%%% 2: OPTIONS   %%%%%%%%%%%%%%%
%%%%%%%%%%%%%%%%%%%%%%%%%%%%%%%%%%%%%%%%%%%

% WAS THIS FOR ARXIV?
%\newcommand\hmmax{0}
%\newcommand\bmmax{0}

% LINE SPACING
\linespread{1}

% PARAGRAPH SKIP

\setlength{\parskip}{4pt}

% MARGIN
\usepackage[margin=0.9in]{geometry}

% TABLE OF CONTENTS
\setcounter{tocdepth}{1}
\setcounter{secnumdepth}{5}

% TIKZ SETUP
\tikzset{
	commutative diagrams/.cd, 
	arrow style=tikz, 
	diagrams={>=stealth}
}
\tikzset{
	arrow/.pic={\path[tips,every arrow/.try,->,>=#1] (0,0) -- +(0,4pt);},
	pics/arrow/.default={triangle 90}
}
\tikzset{->-/.style={decoration={
			markings,
			mark=at position .6 with {\arrow{latex}}},postaction={decorate}}
}
\tikzset{
	c/.style={every coordinate/.try}
}

%%%%%%%%%%%%%%%%%%%%%%%%%%%%%%%%%%%%%%%%%%%
%%%%%%%%%%%%%% 3: MACROS   %%%%%%%%%%%%%%%%
%%%%%%%%%%%%%%%%%%%%%%%%%%%%%%%%%%%%%%%%%%%

% CUSTOM THMS, ETC.

%CustomTheorem
\theoremstyle{plain}

\newenvironment{customthm}[1]
{\innercustomthm}
{\endinnercustomthm}

%CustomCorollary

%CustomConjecture

% DOTS IN TOC (COURTESEY TYLER FOSTER)
\makeatletter
\def\@tocline#1#2#3#4#5#6#7{\relax
	\ifnum #1>\c@tocdepth % then omit
	\else
	\par \addpenalty\@secpenalty\addvspace{#2}%
	\begingroup \hyphenpenalty\@M
	\@ifempty{#4}{%
		\@tempdima\csname r@tocindent\number#1\endcsname\relax
	}{%
		\@tempdima#4\relax
	}%
	\parindent\z@ \leftskip#3\relax \advance\leftskip\@tempdima\relax
	\rightskip\@pnumwidth plus4em \parfillskip-\@pnumwidth
	#5\leavevmode\hskip-\@tempdima
	\ifcase #1
	\or\or \hskip 1em \or \hskip 2em \else \hskip 3em \fi%
	#6\nobreak\relax
	\dotfill\hbox to\@pnumwidth{\@tocpagenum{#7}}\par
	\nobreak
	\endgroup
	\fi}
\makeatother

% MARGINNOTE
\newcounter{marginnote}
\setcounter{marginnote}{0}

% MATH ALPHABET
\DeclareMathAlphabet{\mathpzc}{OT1}{pzc}{m}{it}

%This reverse-links the references in the paper. Useful for large papers.
\usepackage[backref=page]{hyperref}
\hypersetup{
  colorlinks   = true,          %Colors links instead of ugly boxes
  urlcolor     = PineGreen,          %Color for external hyperlinks
  linkcolor    = PineGreen,          %Color of internal links
  citecolor   = PineGreen             %Color of citations
}

% OLD HYPERLINK COLOURING
\begin{comment}
% Backrefs
\usepackage[backref=page]{hyperref} 
\hypersetup{
	colorlinks   = true,          %Colors links instead of ugly boxes
	urlcolor     = purple,          %Color for external hyperlinks
	linkcolor    = purple,          %Color of internal links
	citecolor   = purple             %Color of citations
}
\end{comment}
%\usepackage{hyperref}
% Cleveref
\usepackage{cleveref}

% Theorems etc. with italics
\theoremstyle{plain}
\newtheorem{theorem}{Theorem}[section]
\newtheorem{thm}{Theorem}[section]

\newtheorem{conjecture}[theorem]{Conjecture}

\newtheorem{lemma}[theorem]{Lemma}
\newtheorem{proposition}[theorem]{Proposition}

% Theorems etc. without italics
\theoremstyle{definition}

\newtheorem{remark}[theorem]{Remark}

\newtheorem*{runningexample*}{Running example}

\newtheorem*{aside*}{Aside}

\newtheorem{definition}[theorem]{Definition}
\newtheorem{example}[theorem]{Example}

\newtheorem{proposition-definition}[theorem]{Proposition-Definition}

% Math operators, commands, etc.
\newcommand{\xdashleftrightarrow}[2][]{\ext@arrow 3359\leftrightarrowfill@@{#1}{#2}}

\newcommand{\Gm}{\mathbb{G}_{\operatorname{m}}}

\newcommand{\ol}[1]{\overline{#1}}
\newcommand{\ul}[1]{\underline{#1}}
\newcommand{\bcd}{\begin{center}\begin{tikzcd}}
		\newcommand{\ecd}{\end{tikzcd}\end{center}}

\newcommand{\Aaff}{\mathbb{A}}

\newcommand{\PP}{\mathbb{P}}
\newcommand{\OO}{\mathcal{O}}
\newcommand{\N}{\mathbb{N}}
\newcommand{\Z}{\mathbb{Z}}

\newcommand{\kfield}{\Bbbk}

\newcommand{\Mcal}{\mathcal{M}}

\newcommand{\Gcal}{\mathcal{G}}
\newcommand{\Ucal}{\mathcal{U}}

\newcommand{\pt}{\mathrm{pt}}

\newcommand{\Ugen}{\Ucal}

\newcommand{\sgn}{\operatorname{sgn}}

\newcommand{\Ker}{\operatorname{Ker}}

% cleveref

\crefname{equation}{eq.}{eqs.}
\crefname{eqnarray}{eq.}{eqs.}
\crefname{conjecture}{conjecture}{conjectures}
\crefname{lemma}{lemma}{lemmas}
\crefname{theorem}{theorem}{theorems}
\crefname{claim}{claim}{claims}
\crefname{remark}{remark}{remarks}
\crefname{proposition}{proposition}{propositions}
\crefname{section}{section}{sections}
\crefname{appendix}{appendix}{appendices}
\crefname{corollary}{corollary}{corollaries}
\crefname{figure}{figure}{figures}
\crefname{table}{table}{tables}
\crefname{example}{example}{examples}
\crefname{assumption}{assumption}{assumptions}
\crefname{definition}{definition}{definitions}
\crefname{innercustomthm}{theorem}{theorems}
\crefname{innercustomconj}{conjecture}{conjectures}

%itemize, enumerate
\setlist[enumerate,1]{label=(\arabic*),itemsep=0.9ex}
\setlist[itemize]{itemsep=0.9ex}

\usetikzlibrary{decorations.pathmorphing,shapes,calc,decorations.markings}

\renewcommand{\tilde}[1]{\widetilde{#1}}

\newcommand{\Ical}{\mathcal{I}}

\newcommand{\Id}{\mathrm{Id}}

\DeclareMathOperator{\codim}{codim}

\DeclareMathOperator{\Gr}{Gr}
\DeclareMathOperator{\PGL}{PGL}

\DeclareMathOperator{\pr}{pr}
\DeclareMathOperator{\rk}{rk}
\DeclareMathOperator{\len}{len}
\renewcommand{\O}{\mathcal{O}}

\renewcommand{\P}{\mathbb{P}}
\newcommand{\M}{\mathcal{M}}

\newcommand{\abs}[1]{\left\lvert#1\right\rvert}

\newcommand{\fgt}{\operatorname{fgt}}

\newcommand{\rob}[1]{{\color{teal}[Rob:] #1 }}
\newcommand{\navid}[1]{{\color{olive}[Navid:] #1 }}

\setcounter{tocdepth}{1}

\begin{document}

\title[Projective configuration counts]{Counting point configurations in projective space}
\author{Alex Fink, Navid Nabijou, Rob Silversmith}

\begin{abstract} We investigate the enumerative geometry of point configurations in projective space. We define ``projective configuration counts'': these enumerate configurations of points in projective space such that certain specified subsets are in fixed relative positions. The $\P^1$ case recovers cross-ratio degrees, which arise naturally in numerous contexts.
We establish two main results. The first is a combinatorial upper bound given by the number of weighted transversals of a bipartite graph. The second is a recursion that relates counts associated to projective spaces of different dimensions, by projecting away from a given point. 
Key inputs include the Gelfand--MacPherson correspondence, the Jacobi--Trudi and Thom--Porteous formulae, and the notion of surplus from matching theory of bipartite graphs.
\end{abstract}

\maketitle
\tableofcontents

\section*{Introduction}

\subsection{Cross-ratio degrees} Consider the moduli space $\Mcal_{0,n}$ of configurations of $n$ distinct labelled points in $\P^1$ up to the action of $\PGL_2$. A \textbf{cross-ratio degree} counts the number of such configurations satisfying $n-3$ given cross-ratio constraints. Formally, it is the degree of a product of forgetful maps
\[ \Mcal_{0,n} \to \prod_{j=1}^{n-3} \Mcal_{0,I_j} \]
arising from a choice of $4$-element subsets $I_1,\ldots,I_{n-3} \subseteq [n] \colonequals \{1,\ldots,n\}$. Cross-ratio degrees have been studied from numerous perspectives, enjoying connections to hypertree projections \cite{CastravetTevelev2013}, graph matchings and linear Gromov--Witten theory \cite{Silversmith2022}, scattering amplitudes \cite{Silversmith2024}, tropical curves \cite{Goldner2021}, graph rigidity \cite{GalletGraseggerSchicho2020}, and intersection theory on the compactification $\overline{\Mcal}_{0,n}$ \cite{BrakensiekEurLarsonLi2023}.

\subsection{Projective configuration counts} We initiate the study of cross-ratio degrees in higher dimensions. The analogue of $\Mcal_{0,n}$ is the moduli space
\[ X(r,[n]) \]
of configurations of $n$ linearly general labelled points in $\P^{r-1}$ up to the action of $\PGL_r$. Since $\PGL_r$ acts on $\P^{r-1}$ simply $(r+1)$-transitively, we assume $n \geqslant r+1$. When $n=r+1$ the moduli space is a point, and when $n=r+2$ the moduli space is isomorphic to the complement in $\P^{r-1}$ of a union of linear subspaces.

Fix therefore $r \geqslant 2$, $n \geqslant r+1$ and let $k \colonequals n-r-1$. Fix subsets 
\[ I_1,\ldots,I_k \subseteq [n] \]
of size $r+2$, and let $\Ical = (I_1,\ldots,I_k)$. Consider the product of forgetful maps
\[ \fgt_\Ical \colon X(r,[n]) \to \prod_{j=1}^{k} X(r,I_j). \]
The numerics are chosen such that this is a morphism between smooth, connected varieties of the same dimension. The associated \textbf{projective configuration count} is denoted and defined:
\[ d_{r,n}(\Ical) \colonequals \deg (\fgt_\Ical) \in \N. \]
In this paper we establish two main results on projective configuration counts: a combinatorial upper bound (Theorem \ref{thm: upper bound introduction}) and a dimension reduction principle (Theorem \ref{thm: dimension reduction introduction}). 

\subsection{Upper bound} To state our first main theorem, we introduce some terminology, illustrated in Example \ref{example naive upper bound fails} below. We encode the datum $\Ical$ as a bipartite graph which we call the \textbf{configuration graph} $\Gamma(\Ical)$, with left vertices indexed by the markings $i \in [n]$ and right vertices indexed by the subsets $I_j$ for $j \in [k]$. There is an edge connecting $i \in [n]$ and $j \in [k]$ if and only if $i \in I_j$. 

We have $k<n$, but if we choose a subset $S \subseteq [n]$ of size $r+1$ then we can construct the \textbf{pruned configuration graph}
$\Gamma(\Ical)\!\setminus\!S$ by deleting the vertices indexed by $S$ (and all adjacent edges). Note that $\Gamma(\Ical)\!\setminus\!S$ has the same number of left and right vertices.

Finally, given an integer $m \geqslant 1$ an \textbf{$m$-weighted transversal} of $\Gamma(\Ical)\!\setminus\!S$ is an assignment of non-negative integer weights to the edges of $\Gamma(\Ical)\!\setminus\!S$, such that at each vertex the sum of the adjacent weights is equal to $m$. A $1$-weighted transversal is otherwise known as a \textbf{perfect matching} or \textbf{transversal}.

Our first main result is an upper bound for projective configuration counts, generalising \cite[Theorem~1.1]{Silversmith2022}.
\begin{customthm}{A}[Theorem \ref{thm: upper bound}] \label{thm: upper bound introduction} The projective configuration count $d_{r,n}(\Ical)$ is at most the number of $(r-1)$-weighted transversals of $\Gamma(\Ical)\!\setminus\!S$.	
\end{customthm}

\begin{example} \label{example naive upper bound fails} Take $r=3$ and $n=8$, so that $k=4$. Consider the following data $\Ical=(I_1,I_2,I_3,I_4)$ for a projective configuration count:
\begin{align*}
I_1 & = \{1,2,3,4,5\}, \\
I_2 & = \{3,4,5,6,7\}, \\
I_3 &  = \{5,6,7,8,1\}, \\
I_4 & = \{7,8,1,2,3\}.
\end{align*}
Let $S=\{1,3,5,7\}$. The configuration graph and pruned configuration graph are:
\begin{align*}
\Gamma(\Ical)&=\raisebox{-50pt}{
    \begin{tikzpicture}[scale=0.8]
    % Left-hand side
    \draw[fill=black] (0,0) circle[radius=2pt];
    \draw (0,0) node[left]{$2$};
    \draw[fill=black] (0,-1) circle[radius=2pt];
     \draw (0,-1) node[left]{$4$};
      \draw[fill=black] (0,-2) circle[radius=2pt];
     \draw (0,-2) node[left]{$6$};
      \draw[fill=black] (0,-3) circle[radius=2pt];
     \draw (0,-3) node[left]{$8$};
     \draw[fill=black] (0,0.5) circle[radius=2pt];
    \draw (0,0.5) node[left]{$1$};
    \draw[fill=black] (0,-.5) circle[radius=2pt];
     \draw (0,-.5) node[left]{$3$};
      \draw[fill=black] (0,-1.5) circle[radius=2pt];
     \draw (0,-1.5) node[left]{$5$};
      \draw[fill=black] (0,-2.5) circle[radius=2pt];
     \draw (0,-2.5) node[left]{$7$};
%
     % Right-hand side
    \draw[fill=black] (3,0) circle[radius=2pt];
    \draw (3,0) node[right]{$I_1$};
    \draw[fill=black] (3,-1) circle[radius=2pt];
    \draw (3,-1) node[right]{$I_2$};
    \draw[fill=black] (3,-2) circle[radius=2pt];
    \draw (3,-2) node[right]{$I_3$};
    \draw[fill=black] (3,-3) circle[radius=2pt];
    \draw (3,-3) node[right]{$I_4$};
%
	% Edges
    \draw (0,0.5)--(3,0);
    \draw (0,0.5)--(3,-2);
    \draw (0,0.5)--(3,-3);
    \draw (0,0)--(3,0);
    \draw (0,0)--(3,-3);
    \draw (0,-0.5)--(3,0);
    \draw (0,-0.5)--(3,-1);
    \draw (0,-0.5)--(3,-3);
    \draw (0,-1)--(3,0);
    \draw (0,-1)--(3,-1);
    \draw (0,-1.5)--(3,0);
    \draw (0,-1.5)--(3,-1);
    \draw (0,-1.5)--(3,-2);
    \draw (0,-2)--(3,-1);
    \draw (0,-2)--(3,-2);
    \draw (0,-2.5)--(3,-1);
    \draw (0,-2.5)--(3,-2);
    \draw (0,-2.5)--(3,-3);
    \draw (0,-3)--(3,-2);
    \draw (0,-3)--(3,-3);
\end{tikzpicture}
}
    &\Gamma(\Ical)\!\setminus\!S&=\raisebox{-50pt}{
    \begin{tikzpicture}[scale=0.8]
    % Left-hand side
    \draw[fill=black] (0,0) circle[radius=2pt];
    \draw (0,0) node[left]{$2$};
    \draw[fill=black] (0,-1) circle[radius=2pt];
     \draw (0,-1) node[left]{$4$};
      \draw[fill=black] (0,-2) circle[radius=2pt];
     \draw (0,-2) node[left]{$6$};
      \draw[fill=black] (0,-3) circle[radius=2pt];
     \draw (0,-3) node[left]{$8$};
%
     % Right-hand side
    \draw[fill=black] (3,0) circle[radius=2pt];
    \draw (3,0) node[right]{$I_1$};
    \draw[fill=black] (3,-1) circle[radius=2pt];
    \draw (3,-1) node[right]{$I_2$};
    \draw[fill=black] (3,-2) circle[radius=2pt];
    \draw (3,-2) node[right]{$I_3$};
    \draw[fill=black] (3,-3) circle[radius=2pt];
    \draw (3,-3) node[right]{$I_4$};
%
	% Edges
    \draw (0,0) -- (3,0);
    \draw (0,0) to [out=90,in=0] (-0.5,0.5) to [out=180,in=90] (-1.2,-1.5) to [out=270,in=180] (-0.5,-3.75) to [out=0,in=180] (2.5,-3.75) to [out=0,in=270] (3,-3);
    \draw (0,-1) -- (3,0);
    \draw (0,-1) -- (3,-1);
    \draw (0,-2) -- (3,-1);
    \draw (0,-2) -- (3,-2);
    \draw (0,-3) -- (3,-2);
    \draw (0,-3) -- (3,-3);
\end{tikzpicture}
}
\end{align*}

The graph $\Gamma(\Ical)\!\setminus\!S$ has exactly three $2$-weighted transversals, namely:
\[
\begin{tikzpicture}[scale=0.8]
    % FIGURE 1
    % Left-hand side
    \draw[fill=black] (0,0) circle[radius=2pt];
    \draw (0,0) node[left]{$2$};

    \draw[fill=black] (0,-1) circle[radius=2pt];
     \draw (0,-1) node[left]{$4$};
     
      \draw[fill=black] (0,-2) circle[radius=2pt];
     \draw (0,-2) node[left]{$6$};
     
      \draw[fill=black] (0,-3) circle[radius=2pt];
     \draw (0,-3) node[left]{$8$};

     % Right-hand side
    \draw[fill=black] (3,0) circle[radius=2pt];
    \draw (3,0) node[right]{$I_1$};
    
    \draw[fill=black] (3,-1) circle[radius=2pt];
    \draw (3,-1) node[right]{$I_2$};

    \draw[fill=black] (3,-2) circle[radius=2pt];
    \draw (3,-2) node[right]{$I_3$};

    \draw[fill=black] (3,-3) circle[radius=2pt];
    \draw (3,-3) node[right]{$I_4$};

    \draw (0,0) -- (3,0); 
    \draw[blue] (1.5,0) node[above]{\small$1$};
    \draw (0,0) to [out=90,in=0] (-0.5,0.5) to [out=180,in=90] (-1.2,-1.5) to [out=270,in=180] (-0.5,-3.75) to [out=0,in=180] (2.5,-3.75) to [out=0,in=270] (3,-3);
    \draw[blue] (-1.2,-1.5) node[left]{\small$1$};
    \draw (0,-1) -- (3,0);
    \draw[blue] (1.1,-0.5) node[left]{\small$1$};
    \draw (0,-1) -- (3,-1);
    \draw[blue] (2.1,-1.1) node[above]{\small$1$};
    \draw (0,-2) -- (3,-1);
    \draw[blue] (1.1,-1.5) node[left]{\small$1$};
    \draw (0,-2) -- (3,-2);
    \draw[blue] (2.1,-2.1) node[above]{\small$1$};
    \draw (0,-3) -- (3,-2);
    \draw[blue] (1.1,-2.5) node[left]{\small$1$};
    \draw (0,-3) -- (3,-3);
    \draw[blue] (2.1,-3.1) node[above]{\small$1$};

    % FIGURE 2
    % Left-hand side
    \draw[fill=black] (6,0) circle[radius=2pt];
    \draw (6,0) node[left]{$2$};

    \draw[fill=black] (6,-1) circle[radius=2pt];
     \draw (6,-1) node[left]{$4$};
     
      \draw[fill=black] (6,-2) circle[radius=2pt];
     \draw (6,-2) node[left]{$6$};
     
      \draw[fill=black] (6,-3) circle[radius=2pt];
     \draw (6,-3) node[left]{$8$};

     % Right-hand side
    \draw[fill=black] (9,0) circle[radius=2pt];
    \draw (9,0) node[right]{$I_1$};
    
    \draw[fill=black] (9,-1) circle[radius=2pt];
    \draw (9,-1) node[right]{$I_2$};

    \draw[fill=black] (9,-2) circle[radius=2pt];
    \draw (9,-2) node[right]{$I_3$};

    \draw[fill=black] (9,-3) circle[radius=2pt];
    \draw (9,-3) node[right]{$I_4$};

    \draw (6,0) -- (9,0); 
    \draw[blue] (7.5,0) node[above]{\small$2$};
    \draw (6,0) to [out=90,in=0] (5.5,0.5) to [out=180,in=90] (4.8,-1.5) to [out=270,in=180] (5.5,-3.75) to [out=0,in=180] (8.5,-3.75) to [out=0,in=270] (9,-3);
    \draw[blue] (4.8,-1.5) node[left]{\small$0$};
    \draw (6,-1) -- (9,0);
    \draw[blue] (7.1,-0.5) node[left]{\small$0$};
    \draw (6,-1) -- (9,-1);
    \draw[blue] (8.1,-1.1) node[above]{\small$2$};
    \draw (6,-2) -- (9,-1);
    \draw[blue] (7.1,-1.5) node[left]{\small$0$};
    \draw (6,-2) -- (9,-2);
    \draw[blue] (8.1,-2.1) node[above]{\small$2$};
    \draw (6,-3) -- (9,-2);
    \draw[blue] (7.1,-2.5) node[left]{\small$0$};
    \draw (6,-3) -- (9,-3);
    \draw[blue] (8.1,-3.1) node[above]{\small$2$};

     % FIGURE 3
    % Left-hand side
    \draw[fill=black] (12,0) circle[radius=2pt];
    \draw (12,0) node[left]{$2$};

    \draw[fill=black] (12,-1) circle[radius=2pt];
     \draw (12,-1) node[left]{$4$};
     
      \draw[fill=black] (12,-2) circle[radius=2pt];
     \draw (12,-2) node[left]{$6$};
     
      \draw[fill=black] (12,-3) circle[radius=2pt];
     \draw (12,-3) node[left]{$8$};

     % Right-hand side
    \draw[fill=black] (15,0) circle[radius=2pt];
    \draw (15,0) node[right]{$I_1$};
    
    \draw[fill=black] (15,-1) circle[radius=2pt];
    \draw (15,-1) node[right]{$I_2$};

    \draw[fill=black] (15,-2) circle[radius=2pt];
    \draw (15,-2) node[right]{$I_3$};

    \draw[fill=black] (15,-3) circle[radius=2pt];
    \draw (15,-3) node[right]{$I_4$};

    \draw (12,0) -- (15,0); 
    \draw[blue] (13.5,0) node[above]{\small$0$};
    \draw (12,0) to [out=90,in=0] (11.5,0.5) to [out=180,in=90] (10.8,-1.5) to [out=270,in=180] (11.5,-3.75) to [out=0,in=180] (13.5,-3.75) to [out=0,in=270] (15,-3);
    \draw[blue] (10.8,-1.5) node[left]{\small$2$};
    \draw (12,-1) -- (15,0);
    \draw[blue] (13.1,-0.5) node[left]{\small$2$};
    \draw (12,-1) -- (15,-1);
    \draw[blue] (14.1,-1.1) node[above]{\small$0$};
    \draw (12,-2) -- (15,-1);
    \draw[blue] (13.1,-1.5) node[left]{\small$2$};
    \draw (12,-2) -- (15,-2);
    \draw[blue] (14.1,-2.1) node[above]{\small$0$};
    \draw (12,-3) -- (15,-2);
    \draw[blue] (13.1,-2.5) node[left]{\small$2$};
    \draw (12,-3) -- (15,-3);
    \draw[blue] (14.1,-3.1) node[above]{\small$0$};
    
\end{tikzpicture}
\]
A stochastic calculation (see Section \ref{sec: tables}) gives the projective configuration count as $d_{3,8}(\Ical) = 3$. Since the number of $2$-weighted transversals is $3$, we find that the upper bound is tight in this case (assuming the stochastic calculation is valid).
\end{example}

\begin{remark} \label{rem: need weighted transversals}
    Interestingly, Example \ref{example naive upper bound fails} also shows that the most obvious generalisation of the upper bound in \cite[Theorem~1]{Silversmith2022} --- namely the number of perfect matchings of $\Gamma(\Ical)\!\setminus\!S$ --- fails. The graph $\Gamma(\Ical)\!\setminus\!S$ has only $2$ perfect matchings, and $2<d_{3,8}(\Ical) = 3.$
\end{remark}

% This generalises \cite[Theorem~1.1]{Silversmith2022}. Interestingly, the most obvious generalisation of the upper bound --- namely the number of perfect matchings of $\Gamma(\Ical)\!\setminus\!S$ --- fails. Indeed, for the datum $\Ical$ in Example \ref{example naive upper bound fails}, a stochastic calculation (see \Cref{sec: tables}) gives the projective configuration count as $d_{3,8}(\Ical) = 3$, while the 

%We now explain the graph-theoretic notions appearing above. Associated to the data $\Ical$ is a \textbf{configuration graph} $\Gamma(\Ical)$. It is a bipartite graph, with left vertices indexed by the markings $i \in [n]$ and right vertices indexed by the subsets $I_j$ for $j \in [k]$. There is an edge connecting $i \in [n]$ and $j \in [k]$ if and only if $i \in I_j$. We have $k < n$, but if we choose a subset $S \subseteq [n]$ of size $r+1$ then we can construct the \textbf{pruned configuration graph}
%\[ \Gamma(\Ical)\!\setminus\!S \]
%from $\Gamma(\Ical)$ by deleting the vertices indexed by $S$. This is a bipartite graph with the same number of left and right vertices. Finally given $m \in \N$, an $m$-\textbf{weighted transversal} of $\Gamma(\Ical)\!\setminus\!S$ is an assignment of non-negative integer weights to the edges, such that at each vertex the sum of the adjacent weights is equal to $m$.

\subsection{Dimension reduction} We now consider the situation in which there exists a marking $i \in [n]$ which belongs to every $I_j$. In this case, we can equate the projective configuration count with a projective configuration count one dimension lower:

\begin{customthm}{B}[Theorem \ref{thm: dimension reduction}] \label{thm: dimension reduction introduction} Fix a tuple $\mathcal{I}=(I_1,\ldots,I_k)$ and append an additional marking to each constraint:
    \[ \mathcal{I}' \colonequals (I_1\sqcup\{n\!+\!1\},\ldots,I_k\sqcup\{n\!+\!1\}). \]
    Then we have $d_{r+1,n+1}(\mathcal{I}')=d_{r,n}(\mathcal{I})$.
\end{customthm}
Theorem \ref{thm: dimension reduction introduction} is proven by considering the projection morphism
\[ X(r\!+\!1,[n\!+\!1]) \to X(r,[n]) \]
corresponding to the projection $\P^r \dashrightarrow \P^{r-1}$ away from the final marking. Theorems~\ref{thm: upper bound introduction} and \ref{thm: dimension reduction introduction} can also be combined: see Section \ref{sec: combining theorems}.

\subsection{Compactifications} In the classical ($r=2$) case, the existence of a modular compactification with a recursive normal crossings boundary
\[ \Mcal_{0,n} \subseteq \ol{\Mcal}_{0,n} \]
allows one to push the intersection problem into the boundary and induct. This is the basic strategy at the heart of most major results in the subject \cite{Silversmith2022,BrakensiekEurLarsonLi2023}.

For $r \geqslant 3$ compactifications of $X(r,[n])$ have been studied extensively \cite{GerritzenPiwek1991,Kapranov1993Chow, KeelTevelev,CastravetTevelev2013,AlexeevBook,SchafflerTevelev2022}. Numerous interrelated compactifications exist, but unfortunately none has a recursive normal crossings boundary. We interpret this defect as arising from the pathologies of matroid realisation spaces: when $r \geqslant 3$, such spaces can be singular or disconnected, and the closure of one is not equal to a union of others \cite{Mnev1,Mnev2,CoreyLuber}. Because of this, the analogue of Kapranov's iterated blowup construction \cite{Kapranov1993Chow} necessarily introduces wild singularities.

The lack of a nice modular compactification complicates the subject considerably. For instance, we do not know a combinatorial algorithm to compute projective configuration counts for $r \geqslant 3$ (though there is a stochastic method: see Section \ref{sec: tables}). Constructing a nice modular compactification of $X(r,[n])$ seems the paramount problem in the field; however, we are sceptical that such a compactification exists in general.

\subsection{Prospects} Beyond questions surrounding compactifications, there are several other natural directions to pursue.
	
\begin{enumerate}
\item \textbf{Algorithm.} By studying tropicalisations of arcs, one can pursue an algorithm for computing projective configuration counts via the tropical geometry of the Bruhat--Tits building of $\PGL_r$. This would generalise Goldner's algorithm, which applies when $r=2$ \cite{Goldner2021} (as elaborated in \cite[Section~1.2.4]{Silversmith2022} and reinterpreted via tropical moduli in \cite{SilversmithFirework}). It is inspired by work \cite{SchafflerTevelev2022,GerritzenPiwek1991} on compactifying $X(r,[n])$ via Mustafin varieties, and the relationship of the latter to Bruhat--Tits buildings \cite{CHSW, ChenGibneyKrashen, HahnLi}.

\item \textbf{Positivity.} In Conjecture \ref{conj: positivity} we propose a necessary and sufficient criterion for a projective configuration count to be nonzero. This generalises the $r=2$ surplus condition of \cite[Question~4.2]{Silversmith2022}. It can also be viewed as determining the bases of an ``algebraic polymatroid'' associated to a natural embedding of $X(r,[n])$.\smallskip

\item \textbf{Psi classes.} It is possible to study psi classes on compactifications of $X(r,[n])$ and relate their degrees to projective configuration counts. We have informally investigated these on the Schaffler--Tevelev compactification \cite{SchafflerTevelev2022}. This compactification admits a morphism to a product of projective spaces, and the pullbacks of hyperplane classes can be explicitly compared to psi classes. The comparison is intricate, and thus the same is true of the relationship between the projective configuration counts and psi class integrals.

\item \textbf{Upper bounds.} The strategy used to prove the upper bound should apply in other contexts, whenever there is an open moduli space which compactifies to a product of projective spaces. In \cite{SimmsUpperBound} this is used to establish an upper bound for logarithmic Gromov--Witten invariants.
\end{enumerate}

\subsection{Sample calculations} \label{sec: tables}

\noindent Projective configuration counts can be calculated numerically, by randomly generating constraints and counting the solutions to the resulting system of equations. We implement this in accompanying \texttt{Mathematica} code.

This method is stochastic and thus not error-proof: there is a small chance that the randomly-generated constraints will fail to be generic. However, if we run the code multiple times and obtain the same answer each time, we can be reasonably confident that the calculation is correct.

The following table collects some $r=3$ projective configuration counts calculated in this way, compared against the upper bound given by weighted transversals (Theorem \ref{thm: upper bound introduction}).

\setlength\extrarowheight{3pt}
\begin{center}
% r=3
\begin{tabular}{| c | c | c |}
	\hline
	$\Ical$ & $d_{r,n}(\Ical)$ & $|T_{r-1}(\Gamma(\Ical)\!\setminus\!S)|$ \\
	\hline \hline
	$12345, 23456$ & $1$ & $\mathbf{1}, 3$ \\
	\hline
	\makecell{$12347, 34567, 12567$} & $2$ & $\mathbf{3},6$ \\
	\hline
	\makecell{$12345,34567,56781,78123$} & $3$ & $\mathbf{3}, 6, 10, 14, 15, 20, 42$ \\
	\hline
    %\makecell{$12569,12789,34569,34789,56789$} & $4$ & $\mathbf{9}, 18, 21, 34, 53, 74, 181$ \\
        \makecell{$12345,12367,14578,14689,34569$} & $4$ & $\mathbf{6}, 10, 14, 15, 20, 21, \ldots, 187$ \\
    \hline
\end{tabular}
\end{center}
The first column records the tuple $\Ical=(I_1,\ldots,I_k)$ where we write e.g.\ $12345$ for the set $I_j=\{1,2,3,4,5\}$. The value of $n$ can be deduced from the length $k$ of $\Ical$, since $n=k+r+1=k+4$. The second column records the count $d_{r,n}(\Ical)$. The third column records the upper bound $|T_{r-1}(\Gamma(\Ical)\!\setminus\!S)|$ given by the number of weighted transversals of the pruned configuration graph $\Gamma(\Ical)\!\setminus\!S$, as $S$ varies over $(r+1)$-element subsets of $[n]$. The smallest upper bound achieved this way is in boldface.

\subsection*{Notation} We work over an algebraically closed field of characteristic zero, denoted $\kfield$. Given a positive integer $n$ we write $[n] \colonequals \{ 1, \ldots, n\}$.

\subsection*{Acknowledgements} We have benefited from helpful conversations with Patricio~Gallardo, Marvin~Anas~Hahn, and Luca~Schaffler. We thank them all for their generosity. The key discoveries of this paper arose during a research visit at the Institute for Advanced Study, which we thank for stellar working conditions.

\subsection*{Funding} 
The first author received support from the Engineering and Physical Sciences Research Council (grant number EP/X001229/1).

\section{Setup}

\subsection{Projective configuration spaces}\label{sec:ConfigurationSpacesSetup}

Fix $r\in\Z$ with $r\geqslant2$ and $n \in \Z$ with $n\geqslant r+1$. Consider
\[ X(r,[n])\]
the moduli space of linearly general configurations of $n$ labelled points in $\P^{r-1}$ up to the action of $\PGL_r$. Being linearly general means that for $l \in \{0,1,\ldots,r-2\}$ there is no linear subspace of dimension $l$ which contains $l+2$ points. The moduli space is a free quotient
\[ X(r,[n]) \colonequals U(r,[n])/\PGL_r \]
where $U(r,[n]) \subseteq (\P^{r-1})^n$ is the open subset parametrising linearly general configurations, and $\PGL_r$ acts diagonally. Note that $\dim(X(r,[n]))=(r-1)(n-r-1)$. Note also that $X(2,n)$ is the moduli space $\M_{0,n}$ of $n$-marked rational curves.

More generally given a finite set $I$ we consider the space $X(r,I)$ of configurations of points labelled by $I$. Projective configuration spaces are equipped with \textbf{forgetful maps}
\[ \fgt_I:X(r,[n])\to X(r,I) \]
for $I\subseteq[n]$ with $|I| \geqslant r+1$.

\subsection{Projective configuration counts} \label{sec: projective configuration counts} Set $k \colonequals n-r-1$. Choose subsets
\[ I_1,\ldots,I_k \in \binom{[n]}{r+2} \]
and write $\Ical = (I_1,\ldots,I_k)$. Consider the map 
$$\fgt_\Ical=\prod_{j=1}^k\fgt_{I_j}:X(r,[n])\to\prod_{j=1}^k X(r,I_j)$$ 
whose $j$th coordinate map is $\fgt_{I_j}$. This is a map of irreducible $(r-1)(n-r-1)$-dimensional varieties, hence has a well-defined degree
\[ d_{r,n}(\mathcal{I}) \colonequals \deg \fgt_\Ical \in \N \]
which we refer to as a \textbf{projective configuration count}. It counts the number of configurations of $n$ points which for each $j \in [k]$ restrict to a fixed configuration of the $r+2$ points in $I_j$.

\subsection{Forgetting and projecting} \label{sec: forget and project} Besides the forgetful maps there are also the \textbf{projection maps}
\[\pr_i:X(r,[n])\to X(r-1,[n]\setminus\{i\})\] 
for $i \in [n]$, which project the points of $[n]\setminus\{i\}$ away from the point $i$, giving a configuration in $\P^{r-2}$. The fact that the original configuration is in linearly general position implies that the same holds after projecting from the point $i$.

\begin{proposition} \label{prop: forgetful and projection are smooth}
    The maps $\fgt_I$ and $\pr_i$ are smooth.
\end{proposition}

\begin{proof} The action $\PGL_r \curvearrowright \P^{r-1}$ is simply $(r+1)$-transitive for points in linearly general position. Fixing a subset $S \subseteq I$ of size $r+1$, there is thus a unique element of $\PGL_r$ which sends the points indexed by $S$ to the standard points $[1,\ldots,0],\ldots,[0,\ldots,1],[1,\ldots,1]$. This rigidification produces an open embedding
\[ X(r,I) \hookrightarrow (\P^{r-1})^{I \setminus S} \]
consisting of the moduli for the remaining points. The same holds for $X(r,[n])$, and there is a commuting diagram
\[
\begin{tikzcd}
    X(r,[n]) \ar[r,hook] \ar[d,"\fgt_I"] & (\P^{r-1})^{[n] \setminus S} \ar[d,"f_I"] \\
    X(r,I) \ar[r,hook] & (\P^{r-1})^{I \setminus S}
\end{tikzcd}
\]
with $f_I$ the projection onto the appropriate factors. A direct examination shows that $X(r,[n])$ is an open subset of the fibre product
\[
\begin{tikzcd}
    X(r,[n]) \ar[r,hook] \ar[rd,"\fgt_I" {yshift=-0.5cm, xshift=-0.4cm}] & Y_I \ar[r,hook] \ar[d] \ar[rd,phantom,"\square" right] & (\P^{r-1})^{[n] \setminus S} \ar[d,"f_I"] \\
    & X(r,I) \ar[r,hook] & (\P^{r-1})^{I \setminus S}
\end{tikzcd}
\]
and since $f_I$ is smooth it follows that $\fgt_I$ is smooth.

For $\pr_I$ we first choose a subset $S \subseteq [n] \setminus \{ i \}$ of size $r$ with which to rigidify, giving an open embedding:
\[ X(r-1,[n] \setminus \{ i\}) \hookrightarrow (\P^{r-2})^{([n]\setminus\{i\})\setminus S}. \]
We then define $S^\prime \colonequals S \sqcup \{i\} \subseteq [n]$ and get an open embedding $X(r,[n]) \hookrightarrow (\P^{r-1})^{[n] \setminus S^\prime}$ which (since $i \in S^\prime$) factors through the open subset
\[ X(r,[n]) \hookrightarrow (\P^{r-1} \setminus \{x_i\})^{[n] \setminus S^\prime} \]
where $x_i \in \P^{r-1}$ is the fixed location to which the $i$th point is sent under the rigidification. Note that $[n] \setminus S^\prime = ([n] \setminus \{i\}) \setminus S$ and we have a commuting diagram
\[
\begin{tikzcd}
    X(r,[n]) \ar[r,hook] \ar[d,"\pr_i"] & (\P^{r-1} \setminus \{x_i\})^{[n] \setminus S^\prime} \ar[d,"p_i"] \\
    X(r-1,[n] \setminus \{ i\}) \ar[r,hook] & (\P^{r-2})^{([n]\setminus\{i\})\setminus S}
\end{tikzcd}
\]
where $p_i$ is the projection $\P^{r-1} \setminus \{ x_i \} \to \P^{r-2}$ in each factor. As in the previous case, smoothness of $p_i$ implies smoothness of $\pr_i$.
\end{proof}

\subsection{Grassmannians and the Gelfand--MacPherson correspondence} \label{sec: Grassmannians} Consider the open subset
\[ \Gr^\circ(r,\Bbbk^{[n]}) \subseteq \Gr(r,\Bbbk^{[n]}) \]
parametrising subspaces with all Pl\"ucker coordinates nonvanishing. The Gelfand--MacPherson correspondence \cite{GelfandMacPherson} identifies $X(r,[n])$ with a quotient thereof:
\[ X(r,[n]) \cong \Gr^\circ(r,\Bbbk^{[n]})/\Gm^{[n]}. \]
The stabiliser at every point is the diagonal $\Gm$. Under this identification, the map $\pr_{i}:X(r,[n])\to X(r-1,[n] \setminus \{ i\})$ lifts to the map 
\[ \pr^{\circ}_{i}:\Gr^{\circ}(r,\Bbbk^{[n]})\to \Gr^{\circ}(r-1,\Bbbk^{[n]\setminus \{i\}}) \]
which sends a subspace $V$ to $V\cap\mathbb{V}(x_{i})$, where $(x_1,\ldots,x_{n})$ are the coordinates on $\Bbbk^{[n]}$ (the nonvanishing of all Pl\"ucker coordinates ensures that this intersection has dimension $r-1$). Similarly the map $\fgt_I \colon X(r,[n])\to X(r,I)$ lifts to the map
\[ \fgt^{\circ}_I:\Gr^{\circ}(r,\Bbbk^{[n]})\to\Gr^{\circ}(r,\Bbbk^{I})\]
which sends a subspace $V$ to its image under the coordinate projection $\Bbbk^{[n]} \to \Bbbk^{I}$ (again, nonvanishing of all Pl\"ucker coordinates ensures that this image has dimension $r$).

\subsection{Matchings, the surplus condition, and positivity} 
Fix data $\Ical = (I_1,\ldots,I_k)$ for a projective configuration count as in Section \ref{sec: projective configuration counts}.

\subsubsection{Matchings} The following notions are used in stating the upper bound in Section \ref{sec: upper bound}.

\begin{definition}[Configuration graph] The associated \textbf{configuration graph}, denoted $\Gamma(\Ical)$, is defined as follows. It is a bipartite graph in which the left vertices are indexed by $[n]$ and the right vertices are indexed by $[k]$, and there is an edge $(i,j)$ connecting $i \in [n]$ and $j \in [k]$ if and only if $i \in I_j$.
\end{definition}

\begin{definition}[Pruned configuration graph] \label{def: pruned configuration graph} Given a subset $S \in \binom{[n]}{r+1}$ the \textbf{pruned configuration graph} $\Gamma(\Ical)\!\setminus\!S$ is the graph obtained from $\Gamma(\Ical)$ by deleting the vertices in $S$ (and all their adjacent edges). Since $n-(r+1) = k$ this is a bipartite graph with the same number of left and right vertices.
\end{definition}

\begin{definition}[Weighted transversal] \label{def: weighted transversal} Fix a bipartite graph $\Gamma$ with the same number of left and right vertices, and an integer $m \geqslant 1$. An \textbf{$m$-weighted transversal} of $\Gamma$ is an assignment of a non-negative integer weight $m_e \in \N$ to every edge $e \in E(\Gamma)$ such that
\[ \sum_{e \ni v} m_e = m \]
for every vertex $v \in V(\Gamma)$. We let $T_m(\Gamma)$ denote the set of $m$-weighted transversals. Note that a $1$-weighted transversal is a perfect matching. \end{definition}

\subsubsection{Surplus condition and positivity}

\begin{definition} 
The \textbf{surplus} $\upsigma(\Ical)$ of the datum $\Ical$ is the quantity 
\[ \upsigma(\Ical):=\min_{\substack{J\subseteq[k]\\J\ne\emptyset}} \left( \abs{\bigcup_{j \in J} I_j} - \abs{J} \right). \] 
Taking $J=[k]$ shows that $\upsigma(\Ical)\leqslant r+1$. We say that $\Ical$ satisfies the \textbf{surplus condition} if $\upsigma(\Ical) = r+1$, equivalently if
\[ \left| \bigcup_{j \in J} I_j \right| \geqslant |J| + r + 1 \]
for all nonempty subsets $J \subseteq [k]$.	
\end{definition}

\begin{proposition} \label{prop: nonzero implies surplus} Suppose that $d_{r,n}(\Ical) \neq 0$. Then $\Ical$ satisfies the surplus condition.
\end{proposition}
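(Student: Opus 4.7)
The plan is to argue by contrapositive: assuming the surplus condition fails, I will show $d_{r,n}(\Ical) = 0$. Since $\fgt_\Ical$ is a morphism between irreducible varieties of the same dimension, its degree is nonzero precisely when it is dominant, so it suffices to exhibit an obstruction to dominance when the surplus fails.

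Suppose there exists a nonempty $J \subseteq [k]$ with $\big|\bigcup_{j \in J} I_j\big| \leqslant |J| + r$, and write $T \colonequals \bigcup_{j \in J} I_j$. Composing $\fgt_\Ical$ with the projection onto the factors indexed by $J$ gives the map $\prod_{j \in J} \fgt_{I_j} \colon X(r,[n]) \to \prod_{j \in J} X(r, I_j)$. The key observation is that, since $I_j \subseteq T$ for each $j \in J$, every $\fgt_{I_j}$ factors through $\fgt_T \colon X(r,[n]) \to X(r,T)$. Consequently the projection factors as
\[ X(r,[n]) \xrightarrow{\fgt_T} X(r,T) \longrightarrow \prod_{j \in J} X(r, I_j). \]

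If $\fgt_\Ical$ were dominant, then its composition with the projection to the $J$-factors would be dominant, forcing the second arrow above to be dominant as well. But a dominant morphism of irreducible varieties cannot decrease dimension, while
\[ \dim X(r,T) = (r-1)(|T|-r-1) \leqslant (r-1)(|J|-1) < (r-1)|J| = \dim \prod_{j \in J} X(r, I_j), \]
using $|I_j| = r+2$ and hence $\dim X(r,I_j) = r-1$ for each $j$. This contradiction shows that $\fgt_\Ical$ is not dominant, so $d_{r,n}(\Ical) = 0$, completing the contrapositive. There is no real obstacle here beyond spotting the factorization through $X(r,T)$; the argument is essentially a dimension count combined with the observation that the forgetful maps compose along inclusions of index sets.
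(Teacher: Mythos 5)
Your proof is correct, and it takes a genuinely different (and more streamlined) route than the paper's. The paper first treats the case $J=[k]$, observing that failure of surplus there forces some marking to be missing from all the $I_j$, so that $\fgt_\Ical$ factors through the smaller space $X(r,[n\!-\!1])$; it then reduces general $J$ to this case by choosing an auxiliary set $E$ with $|E|=|J|+r+1$ containing $T=\bigcup_{j\in J} I_j$ and invoking a separate lemma (\Cref{lem: dominance square}) about non-dominance propagating across a commutative square with an open vertical arrow. Your argument bypasses both the case split and the auxiliary lemma: composing with the product projection $\pi_J$ and noting that $\fgt_{I_j}$ factors through $\fgt_T$ whenever $I_j\subseteq T$ immediately yields the factorization of $\pi_J\circ\fgt_\Ical$ through $X(r,T)$, and then the dimension inequality
\[
\dim X(r,T)=(r-1)(|T|-r-1)\leqslant(r-1)(|J|-1)<(r-1)|J|=\dim\prod_{j\in J}X(r,I_j)
\]
(using $r\geqslant 2$ so that $r-1>0$) shows $\pi_J\circ\fgt_\Ical$ cannot be dominant, hence neither is $\fgt_\Ical$. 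The fact that $\pi_J$ is surjective is all you need to conclude that non-dominance of the composite implies non-dominance of $\fgt_\Ical$; you do not need the open-image hypothesis of the paper's lemma. What the paper's version buys is that \Cref{lem: dominance square} is reused elsewhere (in Step~\ref{step: fibre counting} of the dimension reduction proof), so stating it as a lemma has some economy in the larger document; your version is the more self-contained and direct proof of this particular proposition.
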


\begin{proof} Suppose that the surplus condition fails. This means that there exists a nonempty subset $J \subseteq [k]$ such that
\[ \left| \bigcup_{j \in J} I_j \right| < |J| + r+1.\]
Define $I_J \colonequals \bigcup_{j \in J} I_j$. First consider the case $J=[k]$. In this case we have:
\[ |I_{[k]}| < k+r+1 = n \]
which means that there exists an element of $[n]$ which does not belong to any $I_j$. Taking this element to be $n \in [n]$ without loss of generality, the forgetful map $\fgt_\Ical$ factors through a space of smaller dimension
\[ X(r,[n]) \to X(r,[n\!-\!1]) \to \prod_{j=1}^k X(r,I_j) \]
and so is not dominant; thus $d_{r,n}(\Ical)=0$ as required. For general $J$ we choose a subset $E \subseteq [n]$ of size $|J|+r+1$ extending $I_J$, so that $I_J \subsetneq E$. We then define
\[ \Ical_J \colonequals (I_j \colon j \in J) \]
viewed as a collection of subsets of $E$. Notice that the length of $\Ical_J$ is precisely $|J| = |E|-r-1$. Then consider the commuting diagram
\[
\begin{tikzcd}
X(r,[n]) \ar[r,"\fgt_\Ical"] \ar[d,"\fgt_{E}" left] & \prod_{j \in [k]} X(r,I_j) \ar[d] \\
X(r,E) \ar[r,"\fgt_{\Ical_J}"] & \prod_{j \in J} X(r,I_j).
\end{tikzcd}
\]
The bottom arrow is not dominant by the previous case, with $J$ playing the role of $[k]$ and $E$ playing the role of $[n]$. It follows from Lemma~\ref{lem: dominance square} below that the top arrow is not dominant either, and so $d_{r,n}(\Ical)=0$ as required.
\end{proof}

\begin{lemma} \label{lem: dominance square} Consider a commutative diagram of irreducible varieties:
\[
\begin{tikzcd}
Y^\prime \ar[r,"g"] \ar[d,"q"] & Y \ar[d,"p"] \\
X^\prime \ar[r,"f"] & X.
\end{tikzcd}
\]
Suppose that $g$ is dominant and $p$ is open (which holds in particular if $p$ is flat: see \cite[Exercise~III.9.1]{Hartshorne}). Then $f$ is dominant.	
\end{lemma}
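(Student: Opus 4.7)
The plan is to extract density of $f(X')$ from the two hypotheses by a direct chase of open sets, using commutativity $p \circ g = f \circ q$.

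First I would observe that since $p$ is open and $Y$ is open in itself, $p(Y)$ is open in $X$. Being nonempty and open in the irreducible variety $X$, $p(Y)$ is dense in $X$. Next, dominance of $g$ says $g(Y')$ is dense in $Y$.

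The key intermediate step is to show that $p(g(Y'))$ is dense in $X$. Let $U \subseteq X$ be a nonempty open subset. Since $p(Y)$ is dense in $X$, $U$ meets $p(Y)$, so $p^{-1}(U) \cap Y$ is a nonempty open subset of $Y$. Density of $g(Y')$ in $Y$ then forces $g(Y') \cap p^{-1}(U) \neq \emptyset$, which translates to $p(g(Y')) \cap U \neq \emptyset$. Thus $p(g(Y'))$ meets every nonempty open subset of $X$, hence is dense.

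Finally, commutativity gives $p(g(Y')) = f(q(Y')) \subseteq f(X')$, so $f(X')$ is dense in $X$ and $f$ is dominant. There is no real obstacle here; the only subtlety to flag is that openness of $p$ is being used in two guises (to produce the open dense image $p(Y)$, and to pull back opens of $X$ into opens of $Y$ on which density of $g(Y')$ can be applied), but both reduce to the definition of an open map.
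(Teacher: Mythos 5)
Your proof is correct. It does differ in a small but genuine way from the paper's. The paper starts from the fact that dominance of $g$ gives a \emph{dense open} subset $U_g \subseteq g(Y')$ of $Y$ (this is a consequence of Chevalley's theorem on constructibility of images), then applies openness of $p$ once to $U_g$ to produce a nonempty open, hence dense, subset $p(U_g)$ of $X$ contained in $f(X')$. You avoid Chevalley entirely and argue at the level of point-set topology: you use openness of $p$ in two ways (to see $p(Y)$ is dense, and to pull an arbitrary nonempty open $U \subseteq X$ back to a nonempty open $p^{-1}(U) \subseteq Y$), then invoke only the bare density of $g(Y')$ to conclude $p(g(Y'))$ meets $U$. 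Your argument is thus more elementary and works for continuous open maps of irreducible spaces with no algebraic input at all; the paper's argument is shorter and gives the (unused here) marginally stronger output that $f(X')$ contains a dense \emph{open} set. Both are fine, and there is no gap in yours.
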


\begin{proof} Since $g$ is dominant there is a dense open subset $U_g$ of $Y$ with $U_g \subseteq g(Y^\prime)$. Since $p$ is open, the image $p(U_g) \subseteq X$ is open. Since it is nonempty, it is dense, and
\[ p(U_g) \subseteq p(g(Y^\prime)) = f(q(Y^\prime)) \subseteq f(X^\prime). \qedhere\]	
\end{proof}

We conjecture the converse to Proposition \ref{prop: nonzero implies surplus}. 
\begin{conjecture} \label{conj: positivity} We have $d_{r,n}(\Ical) \neq 0$ if and only if $\Ical$ satisfies the surplus condition.
	\end{conjecture}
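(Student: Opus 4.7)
The only-if direction is \Cref{prop: nonzero implies surplus}, so I focus on the converse: that the surplus condition implies $d_{r,n}(\Ical) > 0$. The plan is induction on $r$. The base case $r=2$ is the cross-ratio positivity statement of \cite[Question~4.2]{Silversmith2022}, itself of independent interest, which would be handled via Hall's marriage theorem together with a positivity argument for the contributions of perfect matchings of the pruned configuration graph.

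For the inductive step at $r \geqslant 3$, consider first the case where $\bigcap_{j \in [k]} I_j$ contains some marking $i$. Form $\Ical'' = (I_j \setminus \{i\})_{j \in [k]}$, viewed inside $[n] \setminus \{i\}$, and apply \Cref{thm: dimension reduction introduction} to get $d_{r,n}(\Ical) = d_{r-1,n-1}(\Ical'')$. The surplus condition is inherited: for any nonempty $J \subseteq [k]$,
\[ \Big|\bigcup_{j \in J} (I_j \setminus \{i\})\Big| = \Big|\bigcup_{j \in J} I_j\Big| - 1 \geqslant |J| + r, \]
which is exactly the surplus condition at dimension $r-1$. The inductive hypothesis closes this case.

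The principal obstacle is the case of no common marking. My first approach would be to reduce to the previous case by enlarging one constraint: pick $j_0 \in [k]$ and $i^* \in \bigcap_{j \ne j_0} I_j$ with $i^* \notin I_{j_0}$ (the remaining combinatorial sub-case, in which no such pair exists, would need further argument), and augment $I_{j_0}$ to $\tilde{I}_{j_0} = I_{j_0} \cup \{i^*\}$, so that the modified tuple $\tilde\Ical$ has $i^*$ as a common marking, though now $|\tilde{I}_{j_0}| = r+3$. Factor $\fgt_\Ical$ as $\fgt_{\tilde\Ical}$ composed with the forgetful map $X(r, \tilde{I}_{j_0}) \to X(r, I_{j_0})$, and analyse this factorisation using dimension reduction on the intermediate object together with a fibre-dimension count, aiming to deduce dominance of $\fgt_\Ical$ from the inductive hypothesis applied to the intermediate problem.

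A complementary, more direct, approach lifts to the Gelfand--MacPherson picture (\Cref{sec: Grassmannians}): given generic $r$-dimensional subspaces $V_j \subseteq \Bbbk^{I_j}$, find $V \subseteq \Bbbk^{[n]}$ of dimension $r$ with all Pl\"ucker coordinates nonvanishing and with coordinate projection $V \twoheadrightarrow V_j$ for every $j$. This is a Schubert-type intersection problem on $\Gr(r, \Bbbk^{[n]})$ in which each constraint imposes codimension $2r$, and the surplus condition should correspond to a Horn-type inequality guaranteeing that the intersection is nonempty. The hard part will be controlling the intersection inside the Pl\"ucker-nonvanishing open locus $\Gr^\circ(r, \Bbbk^{[n]})$, which is exactly the phenomenon reflected in the polymatroid perspective mentioned after the conjecture.
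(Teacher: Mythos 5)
The statement you are proving is presented in the paper as a \emph{conjecture}, not a theorem. The paper proves only the ``only if'' direction (\Cref{prop: nonzero implies surplus}) and leaves the converse open, noting that the $r=2$ case is known from the cited literature. There is no proof in the paper to compare your attempt against; a complete argument would be a new result.

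On its own terms, your sketch has gaps, which you mostly flag yourself. The common-marking inductive step is sound: if some $i$ lies in every $I_j$, then \Cref{thm: dimension reduction introduction} applied in reverse gives $d_{r,n}(\Ical)=d_{r-1,n-1}(\Ical'')$, and your computation that $\Ical''$ inherits the surplus condition at level $r-1$ is correct. The difficulty is entirely the no-common-marking case, and neither of your routes closes it. The constraint-enlargement route needs a pair $(j_0,i^*)$ with $i^*\in\bigcap_{j\neq j_0} I_j\setminus I_{j_0}$; once there is no common marking this amounts to asking $\bigcap_{j\neq j_0}I_j\neq\emptyset$ for some $j_0$, which genuinely fails for moderate $k$. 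For example with $r=2$, $n=8$, $k=5$ and
\[ I_1=\{1,2,3,4\},\ I_2=\{1,2,5,6\},\ I_3=\{3,4,7,8\},\ I_4=\{5,6,7,8\},\ I_5=\{1,3,5,7\}, \]
the surplus condition holds but no marking lies in four of the five sets, so $\bigcap_{j\neq j_0}I_j=\emptyset$ for every $j_0$. Even when the pair does exist, $\tilde I_{j_0}$ has size $r+3$, so $\fgt_{\tilde\Ical}$ is a map between varieties of different dimensions and is no longer a projective configuration count; \Cref{thm: dimension reduction introduction} does not apply to it, and the suggested fibre-dimension bookkeeping would need its own development and proof. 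The Schubert--Horn alternative is a program rather than an argument: one would have to compute the classes of the conditions $(\fgt^\circ_{I_j})^{-1}(V_j)$, show the surplus condition forces a nonzero product, and then — as you note, and this is precisely the crux — control the intersection inside the open locus $\Gr^\circ(r,\kfield^{[n]})$, which is not a Schubert-calculus question. As it stands the proposal is a plan of attack, not a proof.
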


The $r=2$ case is known. See \cite[Theorem~3.3]{JordanKaszanitzky2015} and \cite[Theorem~A]{BrakensiekEurLarsonLi2023} where the surplus condition is referred to as ``$(1,3)$-tightness'' and the ``Cerberus condition'' respectively.
In this setting, characterising when $d_{r,n}(\Ical) \neq 0$ is equivalent to determining the algebraic matroid associated to the embedding
\[ \Mcal_{0,n} \hookrightarrow \prod_{I \in \binom{[n]}{4}} \Mcal_{0,I}. \]
Moreover the basis degrees of this algebraic matroid (see e.g.\ \cite{Rosen2014}) are precisely the (nonzero) cross-ratio degrees. 
For $r \geqslant 3$, the embedding 
\[ X(r,[n]) \hookrightarrow \prod_{I \in \binom{[n]}{r+2}} X(r,I) \]
has an associated \emph{polymatroid} on ground set $[n]$, whose rank function measures the dimensions of the images of all projections. Conjecture~\ref{conj: positivity} would precisely determine the bases of this polymatroid, with the basis degrees being the (nonzero) projective configuration counts.

Proposition \ref{prop: nonzero implies surplus} alternatively follows by combining Theorem \ref{thm: upper bound}, Lemma \ref{lem: no transversals}, and the following result from matching theory, closely related to Hall's marriage theorem (Theorem \ref{thm: Hall}).

\begin{theorem}[{see \cite[Section~1.3]{LPMatching}}] \label{thm: surplus iff matching} The surplus condition holds if and only if $\Gamma(\Ical)\!\setminus\!S$ admits a perfect matching for all $S \in \binom{[n]}{r+1}$.
\end{theorem}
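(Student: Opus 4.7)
The plan is to derive both directions from Hall's marriage theorem (\Cref{thm: Hall}), applied to the right-vertex side of the bipartite graph $\Gamma(\Ical)\!\setminus\!S$. The key observation throughout is that $\Gamma(\Ical)\!\setminus\!S$ has $n-(r+1)=k$ left vertices and $k$ right vertices, so a perfect matching exists if and only if Hall's condition holds on either side.

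For the forward direction, I would assume the surplus condition and fix an arbitrary $S \in \binom{[n]}{r+1}$. To verify Hall's condition on the right side, take any nonempty $J \subseteq [k]$. Its neighborhood in $\Gamma(\Ical)\!\setminus\!S$ is $\bigl(\bigcup_{j \in J} I_j\bigr) \setminus S$, whose cardinality is at least
\[ \Bigl|\bigcup_{j \in J} I_j\Bigr| - |S| \;\geqslant\; (|J|+r+1)-(r+1) \;=\; |J|, \]
using the surplus hypothesis. Hence Hall's condition holds and a perfect matching exists.

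For the reverse direction I would argue the contrapositive. Suppose the surplus condition fails, so there is a nonempty $J \subseteq [k]$ with $|I_J|\leqslant |J|+r$, where $I_J \colonequals \bigcup_{j \in J} I_j$. Since each $I_j$ already has cardinality $r+2$, we have $|I_J|\geqslant r+2$, so we may choose a subset $S \subseteq I_J$ with $|S|=r+1$. For this $S$, the neighborhood of $J$ in $\Gamma(\Ical)\!\setminus\!S$ is $I_J \setminus S$, with
\[ |I_J \setminus S| \;=\; |I_J|-(r+1) \;\leqslant\; |J|-1 \;<\; |J|, \]
so Hall's condition is violated and no perfect matching exists.

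There is no substantive obstacle here: the proof is a short bookkeeping application of Hall's theorem. The only mild subtlety is recognising that the bipartition is balanced so that checking Hall's condition on the right vertices suffices, and that in the reverse direction the freedom to pick $S$ anywhere in $\binom{[n]}{r+1}$ means we can concentrate it inside $I_J$ to kill off its neighborhood.
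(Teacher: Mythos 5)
Your proof is correct. The paper does not actually supply its own proof of this statement --- it cites it directly to Lov\'asz--Plummer --- and your two-sided Hall's-theorem argument (using the balance $|[n]\setminus S|=k=|[k]|$ to justify checking Hall's condition on the right side, and in the reverse direction concentrating $S$ inside $I_J$ to starve the neighbourhood) is exactly the standard argument from that reference, consistent with the version of Hall's theorem the paper records later as \Cref{thm: Hall}.
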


\begin{remark} Theorem \ref{thm: surplus iff matching} holds in the general context where $\Ical$ is an arbitrary set system and $r+1$ is an arbitrary natural number. Here the notion of perfect matching is replaced by that of a transversal. This explains our terminology in Definition \ref{def: weighted transversal}. \end{remark}

\section{Upper bound} \label{sec: upper bound}

\noindent We establish a graph-theoretic upper bound for the projective configuration counts (Theorem \ref{thm: upper bound}). This generalises the $r=2$ result \cite[Theorem~1.1]{Silversmith2022}. It is interesting to note that the naive extension of the graph theoretic count in the $r=2$ result fails for $r \geqslant 3$, and genuinely new combinatorics is required; see Example \ref{example naive upper bound fails} and Remark \ref{rem: need weighted transversals}.

Fix data $\Ical=(I_1,\ldots,I_k)$ for a projective configuration count, and $S \in \binom{[n]}{r+1}$. Recall the pruned configuration graph $\Gamma(\Ical)\!\setminus\!S$ (Definition \ref{def: pruned configuration graph}) and the notion of weighted transversal (Definition \ref{def: weighted transversal}).

\begin{theorem}[Theorem \ref{thm: upper bound introduction}] \label{thm: upper bound} The projective configuration count is bounded above by the number of $(r-1)$-weighted transversals of the pruned configuration graph $\Gamma(\Ical)\!\setminus\!S$:
\[ d_{r,n}(\mathcal{I}) \leqslant | T_{r-1}(\Gamma(\Ical)\!\setminus\!S) |. \]
\end{theorem}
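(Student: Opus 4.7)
The plan is to compute $d_{r,n}(\Ical)$ as an intersection number on a compactification of $X(r,[n])$ by a product of projective spaces, and then to identify this intersection number combinatorially with $|T_{r-1}(\Gamma(\Ical)\setminus S)|$.

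First I would rigidify using $S$, as in the proof of \Cref{prop: forgetful and projection are smooth}, to obtain an open embedding $X(r,[n]) \hookrightarrow (\P^{r-1})^{[n]\setminus S}$, and similarly open-embed each target $X(r,I_j) \hookrightarrow \P^{r-1}$. Each forgetful $\fgt_{I_j}$ extends to a rational map of compactifications, and the closure
\[ \bigl[\overline{\fgt_{I_j}^{-1}(\pt_j)}\bigr] \in A^{r-1}\bigl((\P^{r-1})^{[n]\setminus S}\bigr) \]
of the preimage of a generic target point is an intrinsic effective class. Because a top-dimensional closure of intersections is contained in the intersection of closures, the resulting boundary excess is non-negative and
\[ d_{r,n}(\Ical) \leqslant \int_{(\P^{r-1})^{[n]\setminus S}} \prod_{j=1}^k \bigl[\overline{\fgt_{I_j}^{-1}(\pt_j)}\bigr]. \]

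The technical core is to identify each such class. Via the Gelfand--MacPherson correspondence (\Cref{sec: Grassmannians}), I would lift to $\Gr(r,\kfield^{[n]})$, where the preimage of a generic $\Gm^{I_j}$-orbit in $\Gr^{\circ}(r,\kfield^{I_j})$ is a torus orbit closure inside the Schubert locus determined by $\ker\!\bigl(\kfield^{[n]} \to \kfield^{I_j}\bigr)$. By the Thom--Porteous formula applied to the induced map of tautological bundles, its Chow class should equal a Schur polynomial in the Chern classes of these bundles; by Jacobi--Trudi, this Schur polynomial is a determinant of complete homogeneous symmetric polynomials. After quotienting by $\Gm^{[n]}$ and transferring to the rigidified presentation $A^\bullet\bigl((\P^{r-1})^{[n]\setminus S}\bigr) = \Z[H_i]/(H_i^r)$, the class should reduce to
\[ h_{r-1}\bigl(\{H_i : i \in I_j\setminus S\}\bigr), \]
the complete homogeneous symmetric polynomial in the hyperplane classes indexed by $I_j\setminus S$. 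For $r=2$ this recovers $h_1 = \sum_i H_i$ and the recipe of \cite[Theorem~1.1]{Silversmith2022}.

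With the class identified, the combinatorial step is a direct expansion. Writing each $h_{r-1}$ as the sum, with unit coefficients, of all degree-$(r-1)$ monomials in its variables, the product $\prod_j h_{r-1}(\{H_i : i \in I_j\setminus S\})$ is a sum over integer matrices $(m_{ij})$ with $m_{ij}\geqslant 0$, $m_{ij}=0$ unless $i \in I_j\setminus S$, and column sums $\sum_i m_{ij} = r-1$. The truncation $H_i^r = 0$ imposes the matching row-sum constraint $\sum_j m_{ij} = r-1$ for every $i$, and by \Cref{def: weighted transversal} the surviving matrices are exactly the $(r-1)$-weighted transversals of $\Gamma(\Ical)\setminus S$; each contributes $1$ to the integral, yielding the stated bound.

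The main obstacle is the class identification. A naive pullback calculation gives $(\fgt_{I_j}^*H)^{r-1} = \bigl(\sum_{i \in I_j\setminus S} H_i\bigr)^{r-1}$, which strictly exceeds $h_{r-1}$ and accounts for why perfect matchings are insufficient (\Cref{rem: need weighted transversals}): the excess is an effective class supported on coordinate strata where several of the relevant hyperplanes simultaneously vanish, reflecting spurious components in the defining ideal of the torus orbit closure. Cleanly extracting $h_{r-1}$ from this overcount, via a systematic Thom--Porteous/Jacobi--Trudi calculation that naturally incorporates the required cancellations, is the heart of the argument.
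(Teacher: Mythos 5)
Your strategy is genuinely different from the paper's, even though both compute an upper bound by intersecting cycle closures in a compactification by a product of projective spaces and then invoking positivity. You rigidify to $(\P^{r-1})^{[n]\setminus S}$ and take $k$ cycles indexed by the \emph{constraints}; the paper instead reformulates the count using $n+1$ incidence cycles $\Lambda_i$, indexed by \emph{markings}, in $(\PGL_r)^k$, and compactifies to $(\P^{r^2-1})^k$. The roles of markings and constraints are swapped, your ambient has dimension $(r-1)k$ rather than $(r^2-1)k$, and your final combinatorial expansion of $\prod_j h_{r-1}(\{H_i : i\in I_j\setminus S\})$ into $(r-1)$-weighted transversals --- using the degree count to force the row-sum constraint --- is correct and arguably cleaner than the reweighting in Lemma~\ref{lem: product of lambda equals transversals}. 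If the missing step below could be filled in, this would be a worthwhile alternative proof.

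The genuine gap, which you identify but do not close, is the class identification $[\overline{\fgt_{I_j}^{-1}(\pt_j)}] = h_{r-1}(\{H_i : i\in I_j\setminus S\})$. This is not a pullback computation: the rational map $\fgt_{I_j}$ has a substantial indeterminacy locus in $(\P^{r-1})^{[n]\setminus S}$, and the naive vanishing ideal of the fiber has excess components supported on coordinate strata which must be identified and excised with the correct multiplicities. I checked the formula by hand for $r=3$ with $|I_j\setminus S|\in\{1,2,3\}$ and it does hold --- for instance with $|I_j\setminus S|=2$ the fiber closure is the graph of a linear isomorphism $\P^2\to\P^2$, whose class is the diagonal class $h_2$; with $|I_j\setminus S|=3$ the naive two-hypersurface intersection gives $(H_1+H_2+H_5)^2 = h_2 + (H_1H_2+H_1H_5+H_2H_5)$, and one must remove three codimension-two coordinate strata. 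But you only gesture at a Thom--Porteous/Jacobi--Trudi argument and do not say what degeneracy locus you would apply it to; it is not clear this is available, precisely because $\fgt_{I_j}$ does not extend to a morphism on your compactification. That is what the paper's change of ambient space buys: in $(\PGL_r)^k$ the cycle $\Lambda_i$ \emph{is} the rank-$1$ locus of an explicit $r\times|J_i|$ matrix whose entries are linear, so passing to $(\P^{r^2-1})^k$ and applying Thom--Porteous is immediate (Proposition~\ref{prop: lambdai class case 2}), with no indeterminacy to resolve. Two further points that you pass over silently should also be stated: that $\bigcap_j\fgt_{I_j}^{-1}(\pt_j)$ is a transverse, reduced, finite intersection in the open locus (the paper gets this from Kleiman's theorem), and that the boundary contributions to the intersection product are non-negative (the paper uses global generation of the tangent bundle of a product of projective spaces together with \cite[Corollary~12.2(a)]{Fulton1998}); both arguments transfer to your ambient unchanged, so these are easy to fix, but they cannot be left implicit.
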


\subsection{Proof outline} Theorem \ref{thm: upper bound} generalises \cite[Theorem~1.1]{Silversmith2022} which was given a different proof in \cite[Theorem~C]{BrakensiekEurLarsonLi2023}. We follow the former argument. The proof proceeds in four steps:
\begin{itemize}
    \item Section \ref{sec: upper bound proof 1}. We equate the projective configuration count with an intersection multiplicity in $(\PGL_r)^k$ (Lemma \ref{lem: projective configuration count equals intersection mult}).
    \item Section \ref{sec: upper bound proof 2}. By passing to the compactification $(\PGL_r)^k \subseteq (\P^R)^k$ (where $R \colonequals r^2-1$), we bound from above the projective configuration count by an intersection product in $(\P^R)^k$ (Lemma \ref{lem: upper bound in terms of product of lambdai}).
    \item Section \ref{sec: upper bound proof 3}. We determine explicitly the cycles in $(\P^R)^k$ we wish to intersect (Propositions~\ref{prop: lambdai class case 1} and \ref{prop: lambdai class case 2}).
    \item Section \ref{sec: upper bound proof 4}. We equate the intersection product of these cycles with the number of weighted transversals (Lemma \ref{lem: product of lambda equals transversals}).
\end{itemize}

\begin{remark} \label{remark: assume each Ji nonempty} For the rest of this section we will assume that the starting data has been chosen such that every $i \in [n]$ belongs to $I_j$ for some $j \in [k]$. Otherwise, the projective configuration count vanishes because $\fgt_\Ical$ factors through the forgetful morphism
\[ X(r,[n]) \to X(r,[n] \setminus \{ i \} ). \]
The upper bound is trivially satisfied in this case.
\end{remark}

\subsection{Intersection problem} \label{sec: upper bound proof 1}
We first modify our input data as follows. We enlarge our marking index set to $[n]_0 \colonequals [n] \sqcup\{0\}$, enlarge our constraint index set to $[k]_0 \colonequals [k] \cup\{0\}$, and enlarge our list of constraints to $\Ical_0 \colonequals (I_0,I_1,\ldots,I_k)$ where $I_0 \colonequals S \sqcup\{0\}$. Note that
\[ d_{r,n}(\Ical) = d_{r,[n]_0}(\Ical_0) \]
since the new marking $0$ is uniquely determined by the constraint $I_0$. We fix generic points
\begin{equation} \label{eqn: fixed points} \pt^i_j \in \PP^{r-1} \end{equation}
for $j \in [k]_0$ and $i \in I_j$. The projective configuration count $d_{r,[n]_0}(\Ical_0)$ enumerates $p_0,p_1,\ldots,p_n \in \PP^{r-1}$ up to the diagonal action of $\PGL_r$, such that for each $j \in [k]_0$ there exists a (necessarily unique) element $g_j \in \PGL_r$ with
\begin{equation} \label{eqn: using PGLr to hit element} g_j^{-1}(p_i) = \pt^i_j \end{equation}
for all $i \in I_j$.\footnote{The inverse sign convention will declutter later arguments.}  A choice of $p_i$ uniquely determines a choice of $g_j$, and \eqref{eqn: using PGLr to hit element} implies
\begin{equation} \label{eqn: relation between gj} g_{j_1}(\pt^i_{j_1}) = g_{j_2}(\pt^i_{j_2}) \end{equation}
for all $i \in I_{j_1} \cap I_{j_2}$. Conversely, given $g_j$ satisfying \eqref{eqn: relation between gj} we can define $p_i \colonequals g_j(\pt^i_j)$ for any $j$ such that $i \in I_j$; this is well-defined by \eqref{eqn: relation between gj}. Therefore, rather than enumerating the points $p_0,p_1,\ldots,p_n \in \PP^{r-1}$ it is equivalent to enumerate the elements $g_0,g_1,\ldots,g_k \in \PGL_r$ satisfying \eqref{eqn: relation between gj}. Moreover we may rigidify the diagonal $\PGL_r$ action by setting $p_i = \pt^i_j$ for $i \in I_0$, which is equivalent to setting $g_0=\Id$.

We now construct the locus of such $g_j$, and prove that it is smooth of the expected dimension. Consider embeddings
\[ \P^{r-1} \hookrightarrow (\P^{r-1})^{[k]_0} \]
such that each composite $\P^{r-1} \hookrightarrow (\P^{r-1})^{[k]_0} \to \P^{r-1}$ is an automorphism. We consider the moduli space of such embeddings, up to automorphisms of the domain. This is isomorphic to the quotient
\[ \PGL_r \backslash (\PGL_r)^{[k]_0}. \]
We use our new index $0 \in [k]_0$ to rigidify the action, producing an isomorphism
\[ \PGL_r \backslash (\PGL_r)^{[k]_0} \cong (\PGL_r)^k. \]
This space carries a universal family, which under the above isomorphism takes the form
\[
\begin{tikzcd}
    (\PGL_r)^k \times \P^{r-1} \ar[r,"\upsigma"] \ar[d,"\uppi"] & (\P^{r-1})^{[k]_0} \\
    (\PGL_r)^k
\end{tikzcd}
\]
where $\upsigma \colon (g_1,\ldots,g_k,z) \mapsto (z,g_1^{-1}z,\ldots,g_k^{-1} z)$.

For $i \in [n]_0$ we define a subvariety $\tilde{\Lambda}_i(\pt^i)$ of $(\PGL_r)^k \times \PP^{r-1}$ as follows. First consider the set indexing tuples $I_j$ which contain $i$,
\[ J_i \colonequals \{ j \in [k]_0 \colon i \in I_j \}, \]
noting that $i \in I_j$ if and only if $j \in J_i$. Consider the projection
\[ \uppi_{J_i} \colon (\P^{r-1})^{[k]_0} \to (\P^{r-1})^{J_i}. \]
Recall the points $\pt^i_j$ chosen in \eqref{eqn: fixed points} and arrange these according to the associated marking by setting:
\[ \pt^i \colonequals (\pt^i_j)_{j \in J_i} \in (\P^{r-1})^{J_i}.\]
We then define
\begin{align*} \tilde{\Lambda}_i(\pt^i) & \colonequals \upsigma^{-1} \uppi_{J_i}^{-1}(\pt^i) \\
& = \{ (g_1,\ldots,g_k,z) \in (\PGL_r)^k \times \P^{r-1} : g_j^{-1} z = \pt^i_j \text{ for all } j \in J_i \}
\end{align*}
where by our choice of rigidification we have
\begin{equation} \label{eqn: g0 is Id} g_0 = \Id \in \PGL_r.\end{equation}
As a subscheme of $(\PGL_r)^k \times \P^{r-1}$ this is defined by the equations
\begin{align*}
    g_{j_1} \pt^i_{j_1} & = g_{j_2} \pt^i_{j_2} \qquad \text{for all $j_1,j_2 \in J_i$,} \\
    z & = g_j \pt^i_j \qquad \ \ \, \text{for all $j \in J_i$},
\end{align*}
and $z$ plays the role of the marking $p_i$. We now consider the image of $\tilde{\Lambda}_i(\pt^i)$ under $\uppi$. On the level of closed points this is given by
\[
\Lambda_i(\pt^i) = \{(g_1,\ldots,g_k)\in(\PGL_r)^k : \exists z \in \P^{r-1} \text{ such that } g_j^{-1} z = \pt^i_j \text{ for all } j \in J_i \}
\]
while as a subscheme of $(\PGL_r)^k$ it is defined by the equations:
\begin{equation} \label{eqn: description of Lambdai not involving z} \Lambda_i(\pt^i) = \left\{ (g_1,\ldots,g_k)\in(\PGL_r)^k : g_{j_1} \pt^i_{j_1} = g_{j_2} \pt^i_{j_2} \text{ for all $j_1,j_2 \in J_i$} \right\}.\end{equation}
Note that these are equivalent to \eqref{eqn: relation between gj}. Comparing these to the equations cutting out $\tilde{\Lambda}_i(\pt^i)$, and using the assumption that $J_i$ is nonempty (see Remark \ref{remark: assume each Ji nonempty}), we see that once $(g_1,\ldots,g_k)$ are fixed the point $z$ is determined, and more specifically determined by a system of linear equations in the homogeneous coordinates on $\P^{r-1}$. Therefore $\uppi$ restricts to an isomorphism:
\[ \tilde{\Lambda}_i(\pt^i) \cong \Lambda_i(\pt^i). \]

\begin{lemma} \label{lem: codim of Lambdai} $\Lambda_i(\pt^i)$ is a smooth variety of codimension $(r-1)(|J_i|-1)$ in $(\PGL_r)^k$. \end{lemma}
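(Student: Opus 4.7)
The plan is to realise $\Lambda_i(\pt^i)$ as the preimage of a smooth subvariety under a smooth surjective morphism, and to read off both smoothness and the codimension from that presentation.

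First I would set $J_i' \colonequals J_i \setminus \{0\}$ and introduce the evaluation map
\[ \rho \colon (\PGL_r)^k \longrightarrow (\P^{r-1})^{J_i'}, \qquad (g_1,\ldots,g_k) \longmapsto (g_j \cdot \pt^i_j)_{j \in J_i'}. \]
This factors as the coordinate projection to $(\PGL_r)^{J_i'}$ followed by the product of the orbit maps $\mu_j \colon \PGL_r \to \P^{r-1}$, $g \mapsto g \cdot \pt^i_j$. Each $\mu_j$ realises $\P^{r-1}$ as the homogeneous space $\PGL_r / P_j$ for the (connected) parabolic subgroup $P_j$ stabilising $\pt^i_j$, so it is smooth and surjective with connected fibres; the coordinate projection is trivially smooth and surjective with connected fibres. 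Therefore $\rho$ is smooth and surjective with connected fibres.

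Next I would rewrite the defining equations of $\Lambda_i(\pt^i)$ from \eqref{eqn: description of Lambdai not involving z} using the rigidification $g_0 = \Id$ of \eqref{eqn: g0 is Id}. When $0 \notin J_i$, the equations force $(g_j \pt^i_j)_{j \in J_i}$ to lie in the small diagonal $\Delta \subseteq (\P^{r-1})^{J_i}$, giving $\Lambda_i(\pt^i) = \rho^{-1}(\Delta)$. When $0 \in J_i$, the equations collapse to $g_j \pt^i_j = \pt^i_0$ for each $j \in J_i'$, identifying $\Lambda_i(\pt^i) = \rho^{-1}\bigl((\pt^i_0)^{J_i'}\bigr)$. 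In either case the target subvariety is smooth and connected (isomorphic to $\P^{r-1}$ or to a single point, respectively) and has codimension $(r-1)(|J_i| - 1)$ in the ambient product.

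Finally, since smooth morphisms preserve smoothness of preimages without changing codimension, $\Lambda_i(\pt^i)$ is smooth of codimension $(r-1)(|J_i|-1)$ in $(\PGL_r)^k$; and since $\rho$ is surjective with connected fibres, the preimage of a connected subvariety is connected, so $\Lambda_i(\pt^i)$ is irreducible. No substantial obstacle is anticipated: the only bookkeeping concern is the appearance of the two cases depending on whether the rigidified index $0$ lies in $J_i$ (in which the small diagonal degenerates to the point $\pt^i_0$), and introducing the set $J_i'$ at the outset handles them uniformly.
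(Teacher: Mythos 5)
Your proof is correct, and it takes a genuinely different route from the paper's. The paper introduces the auxiliary variety $\tilde{\Lambda}_i(\pt^i) \subseteq (\PGL_r)^k \times \P^{r-1}$, observes it is the fibre over the single point $\pt^i$ of the smooth composite $\uppi_{J_i} \circ \upsigma$ (so it is smooth of codimension $(r-1)|J_i|$), and then shows the projection $\uppi$ restricts to an isomorphism on $\tilde{\Lambda}_i(\pt^i)$, dropping the codimension by the relative dimension $r-1$. You instead eliminate the auxiliary variable $z$ from the start and work directly in $(\PGL_r)^k$: the evaluation map $\rho$ to $(\P^{r-1})^{J_i'}$ is smooth and surjective because each orbit map $\PGL_r \to \P^{r-1}$ is a fibration with parabolic fibres, and $\Lambda_i(\pt^i)$ is then the preimage of the small diagonal (respectively, the point $(\pt^i_0)^{J_i'}$, when $0 \in J_i$), which in either case is smooth, connected, and of codimension $(r-1)(|J_i|-1)$. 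The paper's version avoids your case distinction because the fibre over a single point handles both at once, but at the cost of carrying the extra $\P^{r-1}$ factor; yours is slightly more direct and additionally makes irreducibility explicit, via connectedness of the fibres of $\rho$ together with connectedness of the diagonal. Both prove the statement.
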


\begin{proof} The action map $\upsigma$ is smooth and hence so is the composite $\uppi_{J_i} \circ \upsigma$. It follows that $\tilde{\Lambda}_i(\pt^i)$ is smooth with
\[ \codim \tilde{\Lambda}_i(\pt^i) = \codim (\pt^i) = (r-1) |J_i|. \]
We saw above that the projection $\uppi \colon (\PGL_r)^k \times \P^{r-1} \to (\PGL_r)^k$ is an isomorphism when restricted to $\tilde{\Lambda}_i(\pt^i)$. It follows that $\Lambda_i(\pt^i)$ is smooth, and
\[ \codim \Lambda_i(\pt^i) = \codim \tilde{\Lambda}_i(\pt^i) - \operatorname{reldim}(\uppi) = (r-1)(|J_i|-1). \qedhere\]
\end{proof}

\begin{lemma} \label{lem: projective configuration count equals intersection mult} For generic choices of $\pt^i_j$ we have
\begin{equation}\label{eq: projective configuration count equals intersection mult}    
d_{r,[n]_0}(\Ical_0) = \left| \bigcap_{i \in [n]_0} \Lambda_i(\pt^i) \right|.
\end{equation}
\end{lemma}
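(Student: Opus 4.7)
The plan is to identify the set on the right-hand side, for generic $\pt$, with the closed points of a generic fiber of $\fgt_{\Ical_0}$, and then read off its cardinality from the degree of that map using smoothness.

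First I would fix representatives $(\pt^i_j)_{i \in I_j}$ of a generic point of the target $\prod_{j \in [k]_0} X(r, I_j)$ and unpack the set of points in the fiber of $\fgt_{\Ical_0}$ above this point. Because $\PGL_r$ acts simply $(r+1)$-transitively on linearly general configurations and $|I_0| = r+2$, the constraint at $j = 0$ rigidifies the diagonal action: each $\PGL_r$-orbit in the fiber has a unique representative $(p_0, \ldots, p_n)$ with $p_i = \pt^i_0$ for $i \in I_0$. For each $j \in [k]$, the condition that the restriction $(p_i)_{i \in I_j}$ is projectively equivalent to $(\pt^i_j)_{i \in I_j}$ determines a unique $g_j \in \PGL_r$ satisfying $g_j \pt^i_j = p_i$, and compatibility across overlapping markings forces $g_{j_1} \pt^i_{j_1} = g_{j_2} \pt^i_{j_2}$ for every $i \in [n]_0$ and all $j_1, j_2 \in J_i$. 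Putting $g_0 = \Id$, this identifies the fiber bijectively with $\bigcap_i \Lambda_i(\pt^i) \cap U_{\mathrm{lg}}$, where $U_{\mathrm{lg}} \subseteq (\PGL_r)^k$ denotes the open locus on which the associated configuration $(p_i \colonequals g_j \pt^i_j)$ is linearly general, and $\bigcap_i \Lambda_i(\pt^i)$ is cut out by the system \eqref{eqn: description of Lambdai not involving z}.

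Next I would verify that for generic $\pt$ the intersection $\bigcap_i \Lambda_i(\pt^i)$ is transverse, $0$-dimensional, and entirely contained in $U_{\mathrm{lg}}$, so that the set on the right-hand side is exactly the fiber. A direct count using \Cref{lem: codim of Lambdai} gives $\sum_i (r-1)(|J_i| - 1) = (r-1)((r+2)(k+1) - (n+1)) = k(r^2 - 1) = \dim(\PGL_r)^k$, confirming that the expected dimension of the intersection is $0$. Transversality and the containment in $U_{\mathrm{lg}}$ follow from a generic-smoothness / Bertini-style argument applied to the universal family of intersections over the parameter space $\prod_{j \in [k]_0} (\P^{r-1})^{I_j}$ of tuples $\pt$: the non-linearly-general locus is a proper closed subvariety, so for $\pt$ outside a thin set no intersection point escapes $U_{\mathrm{lg}}$ and the intersection scheme is reduced.

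Finally, invoking \Cref{prop: forgetful and projection are smooth}, the map $\fgt_{\Ical_0}$ is smooth between irreducible smooth varieties of the same dimension $(r-1)(n-r-1)$. Its generic fiber is therefore a smooth reduced scheme of dimension $0$, whose cardinality equals $\deg(\fgt_{\Ical_0}) = d_{r, n}(\Ical)$ (and is zero precisely when the map is not dominant). Combined with the bijection and the transversality above, this yields $d_{r,n}(\Ical) = \bigl| \bigcap_i \Lambda_i(\pt^i) \bigr|$. The main obstacle is the genericity step of the second paragraph: without a careful argument ruling out extra intersection points on the non-linearly-general locus, the right-hand side could in principle overcount, so the heart of the proof is controlling how $\bigcap_i \Lambda_i(\pt^i)$ behaves as $\pt$ varies over its parameter space.
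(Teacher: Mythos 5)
Your overall strategy matches the paper's: identify $\bigcap_i \Lambda_i(\pt^i)$ with a generic fibre of the forgetful map, then establish transversality by a genericity argument. There are two significant deviations worth flagging.

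For transversality, the paper applies Kleiman's transitivity theorem to the right action of $(\PGL_r)^k$ on itself: for generic $h_i \in (\PGL_r)^k$ the intersection $\bigcap_i \Lambda_i(\pt^i) h_i$ is transverse, and the identity $\Lambda_i(\pt^i) h_i = \Lambda_i(h_i^{-1} \pt^i)$ converts translation into a change of base point, so the intersection is transverse for generic $\pt$. You instead reason on the universal family of intersections over the parameter space $\prod_{j}(\P^{r-1})^{I_j}$ and invoke a Bertini-style generic-smoothness argument. That is morally sound in characteristic zero, and it has the virtue of putting the $U_{\mathrm{lg}}$ containment (which the paper passes over quickly) on the same footing as transversality; the dimension count $\sum_i (r-1)(|J_i|-1) = k(r^2-1)$ you give is correct. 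But the details you would need to fill in — most notably that the components of the incidence variety dominating the parameter space are reduced, so that generic smoothness applies — are exactly the bookkeeping that Kleiman's theorem lets one skip, since it gives transversality of the translated intersection with no hypotheses on any auxiliary incidence scheme.

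Your final step, however, contains a genuine error. You cite \Cref{prop: forgetful and projection are smooth} to conclude that $\fgt_{\Ical_0}$ is smooth. That proposition asserts smoothness only of the individual maps $\fgt_{I}$ and $\pr_i$; it does not imply smoothness of the product $\fgt_{\Ical_0} = \prod_j \fgt_{I_j}$, and in fact this product is not smooth in general. Since source and target have the same dimension, smoothness of $\fgt_{\Ical_0}$ would be equivalent to it being étale, which fails whenever there is ramification (and there certainly is in general — otherwise there would be nothing to prove about transversality). The conclusion you want is still true, but for a different reason: any dominant generically finite morphism of irreducible varieties over a field of characteristic zero has generic fibre a finite reduced scheme of cardinality equal to the degree, and the degree is zero exactly when the map fails to be dominant. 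Restated that way the argument closes, but as written the step misapplies the cited proposition.
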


\begin{proof}
The intersection of the $\Lambda_i(\pt^i)$ is the set of embeddings that satisfy all the incidence constraints simultaneously, and is thus equal to a fibre of $\fgt_\Ical$. By \cite[Theorem~8.2(b)]{Fulton1998} and the smoothness of $\Lambda_i(\pt^i)$ it suffices to check that the intersection is transverse, i.e.\ that it consists of finitely many reduced points.

Consider the right action of $(\PGL_r)^k$ on itself. By Kleiman's theorem \cite{Kleiman1974} (see also \cite[Section~B.9.2]{Fulton1998}), for generic elements $h_i \in (\PGL_r)^k$ the intersection
\[ \bigcap_{i \in [n]_0} \Lambda_i(\pt_i) h_i \]
is transverse (smooth of the expected dimension). Now for $i \in [n]_0$ there is a projection
\[(\PGL_r)^k \to (\PGL_r)^{J_i}\]
which produces a left action $(\PGL_r)^k \curvearrowright (\P^{r-1})^{J_i}$. Then it follows directly from \eqref{eqn: description of Lambdai not involving z} that
\[ \Lambda_i(\pt^i) h_i = \Lambda_i(h_i^{-1} \pt^i).\]
Replacing each $\pt^i$ by $h_i^{-1} \pt^i$ in \eqref{eq: projective configuration count equals intersection mult} completes the proof.
\end{proof}
From now on we fix generic $\pt^i_j$ as in the preceding lemma. We will write $\Lambda_i$ instead of $\Lambda_i(\pt^i)$ when no confusion can arise.

\subsection{Compactification and upper bound} \label{sec: upper bound proof 2}
Each $\PGL_r$ admits a compactification\footnote{We embed $\PGL_r$ into $\P^R$ by the matrix entries. There is another natural embedding $\PGL_r\hookrightarrow\P^R$ given by first inverting the matrix.}
\[ \PGL_r \hookrightarrow \P^R \]
where $R \colonequals r^2-1$. We will use intersection theory on this compactification to bound the number of intersection points. To this end, we consider the Chow class of the closure of $\Lambda_i$ in $(\P^R)^k$:
\[ [\overline{\Lambda}_i] \in A^\star((\P^R)^k). \]

\begin{lemma} \label{lem: upper bound in terms of product of lambdai} The projective configuration count $d_{r,n}(\Ical)$ is bounded from above by the product of the $[\overline{\Lambda}_i]$:
\[ d_{r,n}(\Ical) \leqslant \prod_{i \in [n]_0} [\overline{\Lambda}_i]. \]
\end{lemma}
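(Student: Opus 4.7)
The plan is to use intersection theory on the smooth projective ambient variety $(\P^R)^k$. I will interpret $\prod_{i \in [n]_0}[\bar{\Lambda}_i]$ as a count of intersection points of $\bigcap_i \bar{\Lambda}_i$ weighted by non-negative multiplicities, and observe that the $d_{r,n}(\Ical)$ transverse reduced points produced by \Cref{lem: projective configuration count equals intersection mult} already contribute $1$ each. The upper bound will then be accounted for by the possible additional non-negative contributions from the boundary $(\P^R)^k \setminus (\PGL_r)^k$.

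First I would verify that the codimensions of the $\bar{\Lambda}_i$ sum to $\dim(\P^R)^k = kR$. Since Zariski closure of an irreducible subvariety preserves dimension, \Cref{lem: codim of Lambdai} gives $\codim_{(\P^R)^k}\bar{\Lambda}_i = (r-1)(|J_i|-1)$. A double count of the incidences $\{(i,j) : i \in I_j\}$ yields
\[ \sum_{i \in [n]_0}(|J_i|-1) = \sum_{j \in [k]_0}|I_j| - (n+1) = (k+1)(r+2) - (k+r+2) = k(r+1), \]
so the total codimension equals $k(r-1)(r+1) = k(r^2-1) = kR$, as required.

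Next I would arrange a proper intersection of the $\bar{\Lambda}_i$. Since $\prod[\bar{\Lambda}_i]$ depends only on Chow classes, the moving lemma on the smooth projective variety $(\P^R)^k$ permits replacing each $\bar{\Lambda}_i$ by a rationally equivalent effective cycle meeting the rest properly; alternatively, the Kleiman-type argument from \Cref{lem: projective configuration count equals intersection mult} extends to $(\P^R)^k$ by translating each $\bar{\Lambda}_i$ by a generic $h_i \in (\PGL_r)^k$, since right multiplication on $\PGL_r$ extends to an action on $\P^R$. Once the intersection is zero-dimensional, each intersection point contributes a positive integer to $\prod [\bar{\Lambda}_i]$, because $(\P^R)^k$ is smooth and the $\bar{\Lambda}_i$ are effective (by Serre's positivity, or \cite[Theorem~8.2]{Fulton1998}). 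Among these, the points lying in the open locus $(\PGL_r)^k$ are exactly the $d_{r,n}(\Ical)$ points of $\bigcap_i \Lambda_i$ from \Cref{lem: projective configuration count equals intersection mult}; smoothness of $\Lambda_i$ and transversality of the intersection there force each local multiplicity to equal $1$. Any remaining intersection points in the boundary contribute non-negatively, yielding the stated inequality.

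The main obstacle I anticipate is verifying the properness of the intersection of the closures in $(\P^R)^k$. Since $(\PGL_r)^k$ does not act transitively on $(\P^R)^k$, Kleiman's theorem does not directly control the boundary behaviour, and one must either combine the interior Kleiman argument with a general moving lemma on the smooth projective variety $(\P^R)^k$, or explicitly analyse the limiting boundary components of each $\bar{\Lambda}_i$ in the compactification of $\PGL_r$ to confirm that the expected codimension persists.
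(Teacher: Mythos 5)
Your decomposition of $\prod_i [\overline{\Lambda}_i]$ into an interior part (the transverse points of $\bigcap_i \Lambda_i$) plus non-negative boundary contributions is the right shape, but your mechanism for producing it has a genuine gap. You propose to make the intersection of the closures $\overline{\Lambda}_i$ proper, either via the moving lemma or via a Kleiman translation. Neither works: the moving lemma replaces the $\overline{\Lambda}_i$ by rationally equivalent cycles bearing no geometric relationship to the original ones, so you lose the identification of the interior points $\bigcap_i \Lambda_i$ as a subset of the new zero-dimensional intersection; and the Kleiman manoeuvre only controls transversality over the homogeneous open orbit $(\PGL_r)^k$, which as you yourself note gives no purchase on the boundary $(\P^R)^k \setminus (\PGL_r)^k$. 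Absent a properness argument, the boundary piece $Z = \bigcap_i \overline{\Lambda}_i \setminus \bigcap_i \Lambda_i$ can have excess dimension, and then the sum-of-positive-local-multiplicities picture you rely on simply does not apply.

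The missing idea is that you do not need properness of the closure intersection at all. Fulton's refined intersection product (\cite[Chapter~6]{Fulton1998}) decomposes $\prod_i [\overline{\Lambda}_i]$ canonically into contributions from the distinguished varieties of $\bigcap_i \overline{\Lambda}_i$ --- the projections of the irreducible components of the normal cone of the intersection in the external product $\prod_i \overline{\Lambda}_i$ --- regardless of whether any of them have excess dimension. The transverse isolated points of $\bigcap_i \Lambda_i$ contribute exactly $1$ each, giving $d_{r,n}(\Ical)$, and the remaining distinguished varieties (all supported in the boundary $Z$) contribute non-negatively because the tangent bundle of $(\P^R)^k$ is globally generated via the Euler sequence; this is precisely the positivity supplied by \cite[Corollary~12.2(a)]{Fulton1998}. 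That non-negativity of excess contributions, not any properness of the closure intersection, is what closes the argument.
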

\begin{proof} This argument is similar to \cite[proof~of~Theorem~C]{BrakensiekEurLarsonLi2023}. By Lemma \ref{lem: projective configuration count equals intersection mult} the intersection $\bigcap_{i=0}^n \Lambda_i$ is zero-dimensional. It follows that when we pass to the compactification we have a decomposition
\[
\bigcap_{i=0}^n \overline{\Lambda}_i = \bigcap_{i=0}^n \Lambda_i \sqcup Z,
\]
where $Z$ is the piece of the intersection supported on the boundary. By \cite[Definition~6.1.2]{Fulton1998} this admits distinguished varieties
\[ Z_1,\ldots,Z_m \subseteq Z, \qquad \bigcup_{i=1}^m Z_i = Z, \]
obtained by projecting the irreducible components of the normal cone of $Z$ in the external product $\Pi_{i=0}^n \overline{\Lambda}_i$. Each distinguished variety gives a canonical contribution to the intersection product, and we will show that this contribution is non-negative. This implies the result since by Lemma \ref{lem: projective configuration count equals intersection mult} the first piece $\bigcap_{i=0}^n \Lambda_i$ of the intersection contributes precisely $d_{r,n}(\Ical)$.

Non-negativity follows from the non-negativity of the tangent bundle of $(\P^R)^k$ (see \cite[Chapter~12]{Fulton1998}). Precisely, consider the following diagram:
\[
\begin{tikzcd}
\bigcap_{i=0}^n \overline{\Lambda}_i \ar[r] \ar[d] \ar[rd,phantom,"\square"] & \prod_{i=0}^n \overline{\Lambda}_i \ar[d] \\
(\PP^R)^k \ar[r,hook,"\Delta"] & \prod_{i=0}^n (\P^R)^k.
\end{tikzcd}
\]
The tangent bundle of $(\P^R)^k$ is globally generated, since by the Euler sequence it arises as a quotient of the direct sum
\[ \OO(1,0,\ldots,0)^{\oplus(R+1)} \oplus \cdots \oplus \OO(0,\ldots,0,1)^{\oplus(R+1)} \]
which is itself globally generated. Its restriction to each distinguished variety $Z_i$ is thus also globally generated, and \cite[Corollary~12.2(a)]{Fulton1998} implies that the contribution of $Z_i$ to the intersection product is non-negative.
\end{proof}

\subsection{Determining the cycles} \label{sec: upper bound proof 3} We will show that the product $\Pi_{i \in [n]_0} [\overline{\Lambda}_i]$ is equal to the number of $(r-1)$-weighted transversals of $\Gamma(\Ical)\!\setminus\!S$. The first step is to express each $[\overline{\Lambda}_i]$ in terms of the hyperplane classes $H_1,\ldots,H_k$ on $(\P^R)^k$. There are two distinct cases:
\begin{enumerate}
    \item Proposition \ref{prop: lambdai class case 1}: $i \in I_0$ (equivalently, $0 \in J_i$).
    \item Proposition \ref{prop: lambdai class case 2}: $i \not\in I_0$ (equivalently, $0 \not\in J_i$).
\end{enumerate}
We deal with these in turn.

\begin{proposition} \label{prop: lambdai class case 1} If $i \in I_0$ then
\[ [\overline{\Lambda}_i] = \prod_{j\in J_i\setminus\{0\}} H_j^{r-1}. \]
\end{proposition}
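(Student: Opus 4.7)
The plan is to exploit the rigidification $g_0 = \Id$ from \eqref{eqn: g0 is Id} to reduce the defining equations of $\Lambda_i$ to a system of uncoupled linear conditions on the individual factors. When $i \in I_0$ we have $0 \in J_i$, so substituting $j_1 = 0$ into the description \eqref{eqn: description of Lambdai not involving z} collapses the equations cutting out $\Lambda_i$ to
$$g_j \, \pt^i_j = \pt^i_0 \qquad \text{for all } j \in J_i \setminus \{0\},$$
and these in turn imply all of the original two-indexed equations.

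First I would observe that each of these equations involves only a single factor $g_j$. Hence $\Lambda_i$ decomposes as an external product
$$\Lambda_i = \prod_{j \in J_i \setminus \{0\}} L_{ij} \ \times \prod_{j \in [k] \setminus J_i} \PGL_r,$$
where $L_{ij} \subseteq \PGL_r$ is the locus of $g_j$ sending $\pt^i_j$ to $\pt^i_0$. Taking closures in $(\P^R)^k$ preserves this product structure, so by the K\"unneth description of $A^\star((\P^R)^k)$ we have $[\overline{\Lambda}_i] = \prod_{j \in J_i \setminus \{0\}} [\overline{L_{ij}}]$, where each factor is interpreted as a class pulled back from the corresponding copy of $\P^R$.

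The heart of the argument is to identify $[\overline{L_{ij}}]$ with $H_j^{r-1}$. Using the matrix-entries embedding $\PGL_r \hookrightarrow \P^R$, the projective incidence condition $g_j \cdot \pt^i_j = \pt^i_0$ is encoded by the vanishing of the $2 \times 2$ minors of the matrix with rows $g_j \cdot \pt^i_j$ and $\pt^i_0$. Fixing an index $s_0$ with $(\pt^i_0)_{s_0} \ne 0$, these minors reduce to $r-1$ linear equations in the entries of $g_j$, regarded now as homogeneous coordinates on $\P^R$. Hence $\overline{L_{ij}}$ is contained in a linear subspace of $\P^R$ of codimension $r-1$.

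The main obstacle is promoting this containment to an equality, so the class is exactly $H_j^{r-1}$ rather than a proper multiple. I would verify two points: that the $r-1$ linear equations are linearly independent (a direct check using $\pt^i_j, \pt^i_0 \ne 0$), and that $L_{ij}$ is irreducible of dimension $r^2 - r$. The latter follows because $L_{ij}$ is a single coset of the stabiliser of $\pt^i_j$ in $\PGL_r$, which is a connected parabolic subgroup. The closure $\overline{L_{ij}}$ then matches the linear subspace on dimension grounds, giving $[\overline{L_{ij}}] = H_j^{r-1}$ and the claimed formula $[\overline{\Lambda}_i] = \prod_{j \in J_i \setminus \{0\}} H_j^{r-1}$.
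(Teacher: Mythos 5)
Your proof is correct and follows essentially the same route as the paper: set $g_0=\Id$ to reduce \eqref{eqn: description of Lambdai not involving z} to the uncoupled conditions $g_j\pt^i_j = \pt^i_0$, observe each is a codimension-$(r-1)$ linear condition on the $j$th factor, and conclude that passing to the compactification $(\P^R)^k$ yields $\prod_{j\in J_i\setminus\{0\}}H_j^{r-1}$. You simply spell out the details that the paper leaves implicit (the product decomposition, the explicit $2\times 2$ minor equations, linear independence, and the coset/parabolic argument giving $\dim L_{ij}=r^2-r$), which is a valid elaboration rather than a different method.
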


\begin{proof}
By \eqref{eqn: g0 is Id} we have $g_0 = \Id$, so \eqref{eqn: description of Lambdai not involving z} gives 
\[ \Lambda_i = \{(g_1,\ldots,g_k)\in(\PGL_r)^k : \pt_0^i = g_j \pt^i_j \text{ for all } j \in J_i \setminus \{0\} \}. \]
This is a codimension~$(r-1)$ linear condition on the entries of each $g_j$. Such a description remains true when we pass to the compactification, producing the desired formula.
\end{proof}

\begin{proposition} \label{prop: lambdai class case 2} If $i \not\in I_0$ then
\[ [\overline{\Lambda}_i] = \sum_{\substack{(a_{ij})_{j\in J_i}\\0\leqslant a_{ij}\leqslant r-1\\\sum_{j\in J_i}a_{ij}=(r-1)(\abs{J_i}-1)}}\prod_{j\in J_i}H_j^{a_{ij}}. \]
\end{proposition}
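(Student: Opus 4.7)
The strategy is to realise $\overline{\Lambda}_i$ as the birational image of an auxiliary incidence variety, compute the class of that variety as a top Chern class via the Euler sequence, and push forward. Since $i \notin I_0$ forces $0 \notin J_i$, the defining equations \eqref{eqn: description of Lambdai not involving z} of $\Lambda_i$ constrain only the coordinates $g_j$ with $j \in J_i \subseteq [k]$. Thus $\overline{\Lambda}_i = \overline{\Lambda'_i} \times (\P^R)^{[k]\setminus J_i}$ for some $\overline{\Lambda'_i} \subseteq (\P^R)^{J_i}$, and $[\overline{\Lambda}_i]$ agrees with $[\overline{\Lambda'_i}]$ under the pullback $A^\star((\P^R)^{J_i}) \hookrightarrow A^\star((\P^R)^k)$, so it suffices to compute $[\overline{\Lambda'_i}]$. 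Mimicking the proof of \Cref{lem: codim of Lambdai}, I define
\[
Y_i \colonequals \overline{\bigl\{((g_j)_{j \in J_i}, z) \in (\PGL_r)^{J_i} \times \P^{r-1} : g_j \pt^i_j = z \text{ for all } j \in J_i\bigr\}} \subseteq (\P^R)^{J_i} \times \P^{r-1},
\]
so that the projection $\pi \colon Y_i \to \overline{\Lambda'_i}$ forgetting $z$ restricts to the isomorphism $\tilde{\Lambda}_i \cong \Lambda_i$ of \Cref{lem: codim of Lambdai}, hence is birational and satisfies $[\overline{\Lambda'_i}] = \pi_\star [Y_i]$.

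To express $[Y_i]$ as a top Chern class, let $\mathcal{Q}$ denote the tautological rank-$(r-1)$ quotient bundle on $\P^{r-1}$. For each $j \in J_i$, the linear dependence of $g_j \pt^i_j$ on the homogeneous coordinates of $g_j$ supplies a bundle map $\mathcal{O}(-H_j) \to \mathcal{O}^{\oplus r}$ on $(\P^R)^{J_i} \times \P^{r-1}$; composing with the Euler quotient $\mathcal{O}^{\oplus r} \twoheadrightarrow \mathcal{Q}$ produces a section $s_j$ of $\mathcal{O}(H_j) \boxtimes \mathcal{Q}$ whose vanishing cuts out the condition $g_j \pt^i_j \parallel z$. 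The direct sum $\bigoplus_{j} s_j$ is thus a section of the rank-$(r-1)|J_i|$ bundle $\bigoplus_j \mathcal{O}(H_j) \boxtimes \mathcal{Q}$ whose main vanishing component is $Y_i$, of the expected codim by \Cref{lem: codim of Lambdai}. Consequently
\[
[Y_i] \ = \ \prod_{j \in J_i} c_{r-1}\bigl(\mathcal{O}(H_j) \boxtimes \mathcal{Q}\bigr) \ = \ \prod_{j \in J_i} \sum_{a+b=r-1} H_j^a h^b,
\]
where the second identity uses the splitting-principle formula $c_{r-1}(L \otimes E) = \sum_k c_1(L)^{r-1-k} c_k(E)$ together with $c(\mathcal{Q}) = 1 + h + \cdots + h^{r-1}$ from the Euler sequence on $\P^{r-1}$. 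Expanding the product and applying $\pi_\star$, together with $\pi_\star h^B = \delta_{B,r-1}$, retains only the terms with $\sum_j b_j = r-1$, equivalently $\sum_j a_j = (r-1)(|J_i|-1)$; this yields the asserted formula.

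The main obstacle, as I see it, is the Chern-class identification $[Y_i] = c_{\text{top}}(\bigoplus_j \mathcal{O}(H_j) \boxtimes \mathcal{Q})$. Each $s_j$ also vanishes on the indeterminacy locus $\{g_j \pt^i_j = 0\} \times \P^{r-1}$, which has codim $r > r-1$, so the vanishing of $\bigoplus_j s_j$ can acquire excess components of codim strictly greater than $(r-1)|J_i|$. The saving fact is that these excess components sit in strictly higher codim than $Y_i$ itself, and hence contribute nothing to the cycle class in $A^{(r-1)|J_i|}$, so $[Y_i]$ genuinely equals the top Chern class. The remaining steps, namely the reduction to $(\P^R)^{J_i}$ and the pushforward computation, are essentially bookkeeping.
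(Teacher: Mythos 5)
Your proof is correct, but it takes a genuinely different route from the paper's. Both arguments start from the same linear-algebraic data, the bundle map $\bigoplus_{j\in J_i}\O(-H_j)\to\O^{\oplus r}$ encoding the vectors $g_j\pt^i_j$. The paper works directly on $(\P^R)^k$: it identifies $\overline{\Lambda}_i$ as the rank-$1$ degeneracy locus of this map, invokes the Thom--Porteous formula (valid since the ambient space is Cohen--Macaulay and the locus has the expected codimension by \Cref{lem: codim of Lambdai}), recognises the resulting determinant as the rectangular Schur polynomial $s_{(r-1)^{|J_i|-1}}$ via Jacobi--Trudi, and finishes with the identity $s_{(r-1)^{m}}(H_\bullet)=h_{r-1}(H_\bullet^{-1})\prod H_j^{r-1}$. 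You instead lift to the incidence correspondence $Y_i\subset(\P^R)^{J_i}\times\P^{r-1}$, where the degeneracy condition becomes the vanishing of a section of $\bigoplus_j\O(H_j)\boxtimes\mathcal Q$; you compute $[Y_i]$ as the top Chern class of that bundle and push forward. This is essentially the classical resolution-and-pushforward proof of the relevant case of Thom--Porteous, unwound for this specific situation. The trade-offs are what you'd expect: the paper's version is short at the price of citing three nontrivial facts (Thom--Porteous, Jacobi--Trudi, the $h$--$s$ flip identity), whereas yours is more self-contained and the desired combinatorics (sums over $a_j\geqslant0$ with $\sum a_j=(r-1)(|J_i|-1)$) emerge directly from $\pi_\star h^b=\delta_{b,r-1}$, at the cost of the geometric analysis of excess components of the section's zero locus.

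Your treatment of that analysis is correct but could be spelled out a little more. You note that each degenerate locus $\{g_j\pt^i_j=0\}\times\P^{r-1}$ has codimension $r>r-1$, and this is the right starting point; to close the argument one should observe that any component of the intersection $\bigcap_j V(s_j)$ on which some subset $K\subseteq J_i$ of the vectors vanish has codimension at least $|K|\,r+(|J_i|-|K|)(r-1)=(r-1)|J_i|+|K|$, strictly exceeding $(r-1)|J_i|$ whenever $K\neq\emptyset$. You should also say a word about why $Y_i$ appears with multiplicity one: over the dense $(\PGL_r)^{J_i}$ locus the zero scheme of $\bigoplus_j s_j$ coincides with $\tilde\Lambda_i$, which by \Cref{lem: codim of Lambdai} is smooth of codimension equal to the rank, hence reduced, so the localized top Chern class assigns it coefficient one. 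With those two remarks in place the identification $[Y_i]=c_{\mathrm{top}}\bigl(\bigoplus_j\O(H_j)\boxtimes\mathcal Q\bigr)$ is airtight and the rest is, as you say, bookkeeping.
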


\begin{proof} By \eqref{eqn: description of Lambdai not involving z} we have 
\[\Lambda_i=\{(g_1,\ldots,g_k)\in(\PGL_r)^k:g_{j_1}\pt_{j_1}^i=g_{j_2}\pt_{j_2}^i \text{ for all } j_1,j_2\in J_i\}.\]
Given $(g_1,\ldots,g_k) \in (\PGL_r)^k$ let $M(g_\bullet)$ be the $r\times\abs{J_i}$ matrix whose $j$\/th column (for $j\in J_i$) is an arbitrary affine representative of $g_j\pt_j^i.$ Then the above becomes 
\[\Lambda_i=\{(g_1,\ldots,g_k)\in(\PGL_r)^k:\rk(M(g_\bullet))=1\}.\]
Note that the $(\ell,j)$ entry of $M(g_\bullet)$ is a linear form in the entries of the matrix $g_j$. Passing to the compactification, these become the homogeneous coordinates of the $j$\/th factor of $(\P^R)^k$ and $M(g_\bullet)$ extends to a map of vector bundles
\[ \bigoplus_{j\in J_i} \O_{(\P^R)^k}(-H_j) \to\O^{\oplus r} \]
over $(\PP^R)^k$ whose rank-$1$ degeneracy locus is $\overline{\Lambda}_i$. Since this is of the expected dimension (Lemma \ref{lem: codim of Lambdai}) and the ambient space is Cohen--Macaulay, the Thom--Porteous formula \cite[Theorem~14.4(c)]{Fulton1998} gives
\[ [\overline{\Lambda}_i] = \det(C), \]
where $C$ is the $(\abs{J_i}-1)\times(\abs{J_i}-1)$ matrix whose $(a,b)$ entry is the following Chern class on $(\PP^R)^k$:
\[c_{r-1+b-a}\left(\O^{\oplus r}-\bigoplus_{j\in J_i}\O(-H_j)\right).\]
We calculate this as
\begin{align*}
    c\left(\O^{\oplus r}-\bigoplus_{j\in J_i}\O(-H_j)\right)&= c(\O^{\oplus r}) \cdot c\left(\bigoplus_{j\in J_i}\O(-H_j)\right)^{-1} \\[0.2cm]
    &=1 \cdot \prod_{j\in J_i}(1-H_j)^{-1}\\[0.2cm]
    &=\sum_{\ell\ge0}h_\ell((H_j)_{j\in J_i}),
\end{align*}
where $h_\ell$ is the complete homogeneous symmetric function. In particular, 
\[c_{r-1+b-a} (\O^{\oplus r}-\oplus_{j\in J_i}\O(-H_j) )=h_{r-1+b-a}((H_j)_{j\in J_i}).\]
The first Jacobi--Trudi identity then applies, giving 
\[ [\overline{\Lambda}_i] = \det(C) = s_{(r-1)^{|J_i|-1}}((H_j)_{j\in J_i}),\]
where the right-hand side is the Schur polynomial associated to the partition
\[ (r-1)^{|J_i|-1} \colonequals (\underbrace{r-1,\ldots,r-1}_{\abs{J_i}-1\text{ times}}).\]
By \cite[Exercise 7.41]{StanleyEC2} we have 
\[  s_{(r-1)^{|J_i|-1}}((H_j)_{j\in J_i}) = h_{r-1}((H_j^{-1})_{j\in J_i})\cdot\prod_{j\in J_i}H_j^{r-1}, \]
which expands out to give the desired formula for $[\overline{\Lambda}_i]$.
\end{proof}

\subsection{Weighted transversals} \label{sec: upper bound proof 4}

Having determined the $[\overline{\Lambda}_i]$ in the previous section, we now equate their product with the number of weighted transversals:

\begin{lemma} \label{lem: product of lambda equals transversals} $\prod_{i=0}^n [\overline{\Lambda}_i] = |T_{r-1}(\Gamma(\Ical)\!\setminus\!S)|$.   
\end{lemma}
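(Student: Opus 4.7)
The plan is to substitute the formulas from \Cref{prop: lambdai class case 1} and \Cref{prop: lambdai class case 2} into the product $\prod_{i\in[n]_0}[\overline{\Lambda}_i]$, extract the coefficient of the top class $\prod_{j=1}^k H_j^R$ on $(\P^R)^k$, and match this coefficient bijectively with $(r-1)$-weighted transversals of $\Gamma(\Ical)\!\setminus\!S$.

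First, I will record that the factor $i=0$ contributes $[\overline{\Lambda}_0]=1$ because $J_0=\{0\}$, so $J_0\setminus\{0\}=\emptyset$. For $i\in S = I_0\setminus\{0\}$ the factor is the monomial $\prod_{j\in J_i\cap[k]}H_j^{r-1}$; and for $i\in[n]\setminus S$ (i.e.\ $i\notin I_0$) the factor is a sum of monomials $\prod_{j\in J_i}H_j^{a_{ij}}$ indexed by tuples $(a_{ij})_{j\in J_i}$ with $0\leqslant a_{ij}\leqslant r-1$ and $\sum_{j\in J_i}a_{ij}=(r-1)(|J_i|-1)$. Expanding the product, the exponent of $H_j$ (for each fixed $j\in[k]$) in a given monomial term is
\[
(r-1)\,|S\cap I_j|\;+\;\sum_{\substack{i\in[n]\setminus S\\ i\in I_j}}a_{ij}.
\]
Since $R=r^2-1=(r-1)(r+1)$ and $|I_j|=r+2$, the condition that this equals $R$ simplifies to $\sum_{i\in I_j\setminus S}a_{ij}=(r-1)(|I_j\setminus S|-1)$.

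Now I will make the change of variables $b_{ij}:=(r-1)-a_{ij}$ for each edge $(i,j)$ of the pruned configuration graph $\Gamma(\Ical)\!\setminus\!S$ (whose left vertices are $[n]\setminus S$ and whose right vertices are $[k]$, with an edge exactly when $i\in I_j$). The bound $0\leqslant a_{ij}\leqslant r-1$ becomes $0\leqslant b_{ij}\leqslant r-1$, the constraint $\sum_{j\in J_i}a_{ij}=(r-1)(|J_i|-1)$ becomes $\sum_{j\ni i}b_{ij}=r-1$ at each left vertex, and the top-degree constraint derived above becomes $\sum_{i\in I_j\setminus S}b_{ij}=r-1$ at each right vertex. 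These are exactly the defining conditions of an $(r-1)$-weighted transversal of $\Gamma(\Ical)\!\setminus\!S$, so the resulting correspondence is a bijection and the lemma follows.

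The only genuine arithmetic to verify is the degree bookkeeping: one must check that the total codimension of the $\overline{\Lambda}_i$ equals $kR$ (so that the coefficient of $\prod H_j^R$ really is the intersection number) and that the per-$j$ degree identity $(r-1)|S\cap I_j|+\sum a_{ij}=R$ rearranges to $(r-1)(|I_j\setminus S|-1)$ as claimed. Both are straightforward counts using $n=k+r+1$, $|I_j|=r+2$, and $|S|=r+1$. The step I would be most careful about is confirming that the $i=0$ factor contributes trivially and that the rigidification $g_0=\Id$ has been properly accounted for, so that the hyperplane classes $H_1,\ldots,H_k$ (and not an extra $H_0$) are the correct variables on $(\P^R)^k$.
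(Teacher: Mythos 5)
Your proposal is correct and follows essentially the same route as the paper: expand the product using the formulas of \Cref{prop: lambdai class case 1} and \Cref{prop: lambdai class case 2}, use the per-$j$ degree constraint together with the per-$i$ constraint from \Cref{prop: lambdai class case 2}, and perform the substitution $b_{ij}=(r-1)-a_{ij}$ to biject monomial choices with $(r-1)$-weighted transversals of $\Gamma(\Ical)\!\setminus\!S$. Your simplification of the right-vertex constraint to $\sum_{i\in I_j\setminus S}a_{ij}=(r-1)(|I_j\setminus S|-1)$ via $|S\cap I_j|=r+2-|I_j\setminus S|$ is a slightly cleaner way to present the arithmetic than the paper's subtraction $(r-1)(r+2)-(r^2-1)$, but it leads to the same weighted-transversal condition; and your explicit note that $[\overline\Lambda_0]=1$ (since $J_0=\{0\}$) is covered in the paper by allowing the empty product in \Cref{prop: lambdai class case 1}. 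The one point you should state explicitly, rather than tacitly assume, is that all coefficients in the expansions of \Cref{prop: lambdai class case 1,prop: lambdai class case 2} are $0$ or $1$ (the tuples $(a_{ij})$ in the second case index distinct monomials), so that the coefficient of $\prod_j H_j^R$ really is a cardinality you can put in bijection with the set of transversals.
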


\begin{proof} The intersection number is equal to the coefficient of $\prod_{j=1}^k H_j^R$ in the product of the $[\overline{\Lambda}_i]$. The coefficients appearing in the formulae in Propositions~\ref{prop: lambdai class case 1} and \ref{prop: lambdai class case 2} are either $0$ or $1$. The intersection number is thus equal to the number of ways to select a monomial $z_i$ from the formula for $[\overline{\Lambda}_i]$ such that $\Pi_{i=0}^n z_i = \Pi_{j=1}^k H_j^R$. 

For $i \in I_0$ (Proposition \ref{prop: lambdai class case 1}) there is no choice to make, and we must have:
\begin{equation} \label{eqn: transversals proof first monomial} z_i = \prod_{j\in J_i\setminus\{0\}} H_j^{r-1}. \end{equation}
For $i \not\in I_0$ (Proposition \ref{prop: lambdai class case 2}) we can choose any monomial of the form
\begin{equation} \label{eqn: transversals proof second monomial} z_i = \prod_{j\in J_i}H_j^{a_{ij}} \end{equation}
for some $(a_{ij})_{j\in J_i}$ such that:
\begin{equation} \label{eqn: transversals proof first sum}  \sum_{j \in J_i} a_{ij} = (r-1)(|J_i|-1). \end{equation}
The condition $\prod_{i=0}^n z_i = \prod_{j=1}^k H_j^R$ implies that for each $j\in[k]$:
\[(r-1)\cdot| I_j\cap I_0| + \sum_{i\in I_j\setminus I_0} a_{ij} = R = r^2-1\]
where the two summands come from \eqref{eqn: transversals proof first monomial} and \eqref{eqn: transversals proof second monomial}. Subtracting each side from $\sum_{i\in I_j}(r-1) = (r-1)(r+2)$, we have
\begin{equation} \label{eqn: transversals proof second sum}
\sum_{i\in I_j\setminus I_0} (r-1-a_{ij}) = (r-1)(r+2) - (r^2-1) = r-1.
\end{equation}
Now observe that we have defined $a_{ij}$ exactly when $(i,j)$ is an edge of $\Gamma(\Ical)\!\setminus\!S$. Indeed: $a_{ij}$ is defined for $i \in [n]_0 \setminus I_0 = [n] \setminus S$ and $j \in J_i$. The condition $i \not\in I_0$ corresponds to removing the vertices in $S$ to pass from $\Gamma(\Ical)$ to $\Gamma(\Ical)\!\setminus\!S$, while the condition $j \in J_i$ is equivalent to $i \in I_j$, and ensures that $(i,j)$ is an edge of $\Gamma(\Ical)\!\setminus\!S$.

For such a choice of $(a_{ij})$, we define a weighting on the edges of $\Gamma(\Ical)\!\setminus\!S$ by giving the edge $(i,j)$ the weight $r-1-a_{i,j} \geqslant 0$. It follows from \eqref{eqn: transversals proof first sum} (respectively \eqref{eqn: transversals proof second sum}) that the sum of the weights on edges incident to the left vertex $i\in[n]_0\setminus I_0 = [n] \setminus S$ (respectively $j \in [k]$) is $r-1$. That is, this weighting is an $(r-1)$-weighted transversal.

The above assignment of a transversal to a choice of monomials $z_i$ is visibly bijective, which completes the proof.
\end{proof}

\begin{proof}[Proof of Theorem \ref{thm: upper bound}]
By Lemma \ref{lem: upper bound in terms of product of lambdai} we have
\[ d_{r,n}(\Ical) \leqslant \prod_{i=0}^n [\overline{\Lambda}_i] \]
whereas Lemma \ref{lem: product of lambda equals transversals} gives
\[ \prod_{i=0}^n [\overline{\Lambda}_i] = |T_{r-1}(\Gamma(\Ical)\!\setminus\!S)|. \]
Combining these, we obtain the result.
\end{proof}

\section{Dimension reduction}

\noindent The following theorem gives a compatibility between the projective configuration counts of $X(r,n)$ and $X(r\!+\!1,n\!+\!1)$. Note that both cases have the same number of constraints: $k=n-r-1=(n+1)-(r+1)-1$.
\begin{theorem}[Theorem \ref{thm: dimension reduction introduction}] \label{thm: dimension reduction}
    Fix a tuple $\mathcal{I}=(I_1,\ldots,I_k) \in\binom{[n]}{r+2}^k$ and append an additional marking to each constraint:
    \[ \mathcal{I}' \colonequals (I_1\sqcup\{n\!+\!1\},\ldots,I_k\sqcup\{n\!+\!1\}) \in\binom{[n\!+\!1]}{(r\!+\!1)+2}^k.\]
    Then $d_{r+1,n+1}(\mathcal{I}') = d_{r,n}(\mathcal{I})$.
\end{theorem}

The proof, which involves detailed linear algebra and Pl\"ucker calculus, proceeds via a sequence of reductions:
\[ \text{Theorem \ref{thm: dimension reduction}} \xLeftarrow{\text{\tiny{Step \ref{step: describing the preimage}}}} \text{Proposition \ref{prop: Gm n plus one orbit}} \xLeftarrow{\text{\tiny{Step \ref{step: Gm orbits}}}} \text{Proposition \ref{prop: Gm orbit}} \xLeftarrow{\text{\tiny{Step~\ref{step: intersecting conditions}}}} \text{Proposition \ref{prop: inclusion of intersection of Hjbar}} \xLeftarrow{\text{\tiny{Step \ref{step: swapping Plucker}}}} \text{Proposition \ref{prop: A has nonvanishing Plucker}} \]
The proof is broken is broken down first into sections (\ref{sec: dimension reduction proof setup}--\ref{sec: dimension reduction proof combining conditions}), and then into steps (\ref{step: fibre counting}--\ref{step: I vanishing}). Throughout, we indicate the first appearance of the most important symbols by enclosing them in a box.

\subsection{Setup} \label{sec: dimension reduction proof setup} We begin by reducing Theorem \ref{thm: dimension reduction} to a lifting problem involving coordinate-general linear subspaces.

\proofstep{Fibre counting} \label{step: fibre counting} Using the projection and forgetful maps from Section \ref{sec: forget and project} we have the commutative diagram
    \begin{equation}\label{eq:PrFgtSquare}
    \begin{tikzcd}
        X(r\!+\!1,[n\!+\!1])\arrow[rr,"\fgt_{\Ical'}"]\arrow[d,"\pr_{n+1}"]&&\prod_{j=1}^k X(r\!+\!1,I_j^\prime)\arrow[d,"\prod_{j=1}^k\pr_{n+1}"]\\
        X(r,[n])\arrow[rr,"\fgt_\Ical"]&&\prod_{j=1}^k X(r,I_j)
    \end{tikzcd}
    \end{equation}
    where $I_j^\prime \colonequals I_j \sqcup \{ n\!+\!1\}$. Fix a generic point $(x_1^\prime,\ldots,x_k^\prime) \in \prod_{j=1}^k X(r\!+\!1,I_j^\prime)$ and let $x_j \colonequals \pr_{n+1}(x_j^\prime) \in X(r,I_j)$ for $j \in [k]$. We then have
    \begin{align*} d_{r+1,n+1}(\Ical')& = \len(\fgt_{\Ical'}^{-1}(x_1^\prime,\ldots,x_k^\prime)), \\
    d_{r,n}(\Ical) & =\len(\fgt_{\Ical}^{-1}(x_1,\ldots,x_k)).
    \end{align*} 
    We will show that these two preimages are isomorphic. The left vertical morphism restricts to a morphism
    \[ \pr_{n+1} \colon \fgt_{\Ical^\prime}^{-1}(x_1^\prime,\ldots,x_k^\prime) \to \fgt_\Ical^{-1}(x_1,\ldots,x_k) \]
    and we will construct an inverse. This is equivalent to showing that under the morphism to the fibre product
    \begin{equation}\label{eq:MapToFiberProduct}
    X(r\!+\!1,[n\!+\!1])\to X(r,[n])\times_{\prod_{j=1}^k X(r,I_j)}\prod_{j=1}^k X(r\!+\!1,I_j^\prime)
    \end{equation}
     a generic point in the target has a unique preimage. Note that by Proposition \ref{prop: forgetful and projection are smooth} the vertical morphisms in \eqref{eq:PrFgtSquare} are smooth, so both the source and target of \eqref{eq:MapToFiberProduct} are $r(n-r-1)$-dimensional smooth varieties. 
    
    By Lemma \ref{lem: dominance square}, if $\fgt_\Ical$ is not dominant then neither is $\fgt_{\Ical^\prime}$. In this case we have $d_{r,n}(\Ical)=d_{r+1,n+1}(\Ical')=0$ and the theorem holds trivially. Thus without loss of generality we assume that
\begin{equation} \label{assumption count nonzero} d_{r,n}(\Ical) \neq 0. \end{equation}
This assumption will be used only in Step~\ref{step: I vanishing}, during the proof of Lemma \ref{lem: general I vanishing matrix is coordinate-general}.

\proofstep{Reformulating via Grassmannians} \label{step: Grassmannians} We use the identification
    \[ X(r,[n])=\Gr^{\circ}(r,\Bbbk^{[n]})/\Gm^{[n]} \] 
    described in Section \ref{sec: Grassmannians}. Recall that $\Gr^\circ(r,\kfield^{[n]})$ parametrises linear subspaces with all Pl\"ucker coordinates nonvanishing. We refer to such subspaces as \textbf{coordinate-general}.
    
Associated to \eqref{eq:PrFgtSquare} is the following diagram of prequotients:
\[
\begin{tikzcd}
    \Gm^{[n+1]} \curvearrowright \Gr^\circ(r\!+\!1, \Bbbk^{[n+1]}) \ar[r,"\fgt^\circ_{\Ical^\prime}"] \ar[d,"\pr^\circ_{n+1}", xshift=1.25cm] & \prod_{j=1}^k \Gr^\circ(r\!+\!1,\Bbbk^{I_j^\prime}) \curvearrowleft \prod_{j=1}^k \Gm^{I_j^\prime} \ar[d,"\prod_{j=1}^k \pr^\circ_{n+1}", xshift=-2.25cm] \\
   \makebox[\widthof{$\Gm^{[n+1]} \curvearrowright \Gr^\circ(r+1,\Bbbk^{[n+1]})$}][r]{$\Gm^{[n]} \curvearrowright \Gr^\circ(r,\Bbbk^{[n]}) \ar[r,"\fgt^\circ_\Ical"]$} \ar[r,"\fgt^\circ_\Ical"] & \prod_{j=1}^k \Gr^\circ(r,\Bbbk^{I_j}) \curvearrowleft \prod_{j=1}^k \Gm^{I_j}\ .
\end{tikzcd}
\]
Recall here that ${\pr}_{n+1}^\circ$ sends a vector subspace to its intersection with the coordinate hyperplane $\mathbb{V}(x_{n+1})$, while ${\fgt}_I^\circ$ sends a vector subspace to its image along the coordinate projection $p_I$.

\proofstep{Fixing the point in the target} \label{step: fixing target point} Fix a generic point in the target of \eqref{eq:MapToFiberProduct} and choose a representative at the level of Grassmannians. This consists of the following data:
\begin{itemize}
\item $\boxed{V} \subseteq \kfield^{[n]}$ with $\dim V = r$ and $V$ coordinate-general.
\item $\boxed{V_j^\prime} \subseteq \kfield^{I_j^\prime}$ with $\dim V_j^\prime=r\!+\!1$ and $V_j^\prime$ coordinate-general, for each $j \in [k]$.
\end{itemize}
We then define
\[ \boxed{V_j} \colonequals \fgt^\circ_{I_j}(V) = p_{I_j}(V) \subseteq \kfield^{I_j} \]
and since we have chosen a point in the fibre product, there exist $\boxed{\ul{t}_j} \in \Gm^{I_j}$ such that
\[ V_j^\prime \cap \mathbb{V}(x_{n+1}) = \ul{t}_j V_j. \]
We refer to the above as \textbf{starting data}.

\proofstep{Describing the preimage} \label{step: describing the preimage} We now describe the preimage under \eqref{eq:MapToFiberProduct} of the point chosen in the previous step. At the level of Grassmannians the preimage corresponds to the locus
\[ \Gcal (\text{A},\text{B}_j) \subseteq \Gr^\circ(r\!+\!1,\kfield^{[n+1]}) \] 
consisting of subspaces $V^\prime \subseteq \kfield^{[n+1]}$ satisfying the following two conditions:
\begin{enumerate}
\item[($\text{A}$)] $V^\prime \cap \mathbb{V}(x_{n+1}) = \ul{t} V$ for some $\ul{t} \in \Gm^{[n]}$.
\item[($\text{B}_j$)] $p_{I_j^\prime}(V^\prime) = \ul{t}_j^\prime V_j^\prime$ for some $\ul{t}_j^\prime \in \Gm^{I_j^\prime}$.
\end{enumerate}
Here $(\text{B}_j)$ must hold for all $j \in [k]$. Diagrammatically, producing an element of $\Gcal(\text{A},\text{B}_j)$ amounts to solving the following problem:
\[
\begin{tikzcd}
\color{red}{V^\prime} \ar[d,|->,"\cap \mathbb{V}(x_{n+1})" left] \ar[r,|->,"p_{I_j^\prime}"] & \color{red}{\ul{t}_j^\prime\,} \color{black}{\cdot V_j^\prime} & V_j^\prime \ar[d,|->,"\cap \mathbb{V}(x_{n+1})"] \\
\color{red}{\ul{t}\,} \color{black}{\cdot V} & & \ul{t}_j V_j \\
V \ar[r,|->,"p_{I_j}"] & V_j & &
\end{tikzcd}
\]
where the black data is the starting data fixed in Step~\ref{step: fixing target point}, and the red data is what we need to find. We note that $\Gcal(\text{A},\text{B}_j)$ is a union of orbits for the action
\[ \Gm^{[n+1]} \curvearrowright \Gr^\circ(r\!+\!1,\kfield^{[n+1]}) \]
and Theorem \ref{thm: dimension reduction} reduces to the following:
\begin{proposition} \label{prop: Gm n plus one orbit} The locus $\Gcal(\text{A},\text{B}_j)$ in fact consists of a single $\Gm^{[n+1]}$-orbit.
\end{proposition}

\subsection{Describing the conditions} To establish Proposition \ref{prop: Gm n plus one orbit}, we first reduce it in Step~\ref{step: Gm orbits} to a statement involving $\Gm$-orbits (Proposition \ref{prop: Gm orbit}). We then unravel separately the conditions (A) and ($\text{B}_j$), in Steps~\ref{step: open locus}~and~\ref{step: locally closed}.

\proofstep{Reduction to $\Gm$-orbits} \label{step: Gm orbits} Given $V^\prime \in \Gcal(\text{A},\text{B}_j)$ we have
\[ V^\prime \cap \mathbb{V}(x_{n+1}) = \ul{t} \cdot V \]
for some $\ul{t} \in \Gm^{[n]}$. Moreover, because $V$ is coordinate-general, $\ul{t}$ is unique up to diagonal scaling \cite[Proposition~1.1.2]{GelfandMacPherson}. We then have
\[ ((\ul{t}^{-1},1) \cdot V^\prime) \cap \mathbb{V}(x_{n+1}) = V \]
and this identity persists if we act by elements of the form $(1,s)$ for $s \in \Gm$. We conclude that inside each $\Gm^{[n+1]}$-orbit in $\Gcal(\text{A},\text{B}_j)$ there is a $\Gm$-orbit parametrising subspaces $V^\prime$ satisfying the stronger condition:
\begin{enumerate}
\item[($\text{A}^{\!\star}$)] $V^\prime \cap \mathbb{V}(x_{n+1}) = V$.	
\end{enumerate}
We let $\Gcal(\text{A}^{\!\star},\text{B}_j) \subseteq \Gcal(\text{A},\text{B}_j)$ denote the locus consisting of subspaces satisfying ($\text{A}^{\!\star}$) and $(\text{B}_j)$. This consists of a union of $\Gm$-orbits, where the action is via the final coordinate. There is an identification
\[ \Gcal(\text{A}^{\!\star},\text{B}_j)/\Gm = \Gcal(\text{A},\text{B}_j)/\Gm^{[n+1]} \]
and Proposition \ref{prop: Gm n plus one orbit} thus reduces to:
\begin{proposition} \label{prop: Gm orbit} The locus $\Gcal(\text{A}^{\!\star},\text{B}_j)$ in fact consists of a single $\Gm$-orbit.\end{proposition}

\proofstep{Open locus $\Ugen$} \label{step: open locus}
We first fix a subspace $V^\prime$ satisfying ($\text{A}^{\!\star}$). This means that there exists
\[ \boxed{\ul{u}} \in \kfield^{[n]} \]
such that
\[ V^\prime = V + \kfield \cdot (\ul{u},1). \]
Note that $\ul{u}$ is uniquely defined up to adding elements of $V$. Conversely, any choice of $\ul{u}$ produces a $V^\prime$ satisfying ($\text{A}^{\!\star}$). 

However, $\ul{u}$ must be chosen generically in order to ensure that $V^\prime$ is coordinate-general. This produces an open inclusion
\[ \Gcal(\text{A}^{\!\star}) \subseteq \kfield^{[n]}/V \]
of the locus $\Gcal(\text{A}^{\!\star})$ parametrising coordinate-general $V^\prime$ satisfying ($\text{A}^{\!\star}$). This open subset does not include the origin $\ul{u}=\ul{0}$, since $V^\prime = V + \kfield \cdot (\ul{0},1)$ is not coordinate-general. Moreover it is invariant under the scaling action $\Gm \curvearrowright \kfield^{[n]}/V$ since this action multiplies each Pl\"ucker coordinate of~$V'$ by a character of~$\Gm$ (see the matrix $V^\prime$ in Step~\ref{step: equations for subsets}).
Thus the above inclusion descends to projective spaces: there is an open subset
\begin{equation} \label{eqn: Gm orbits of A bullet} \boxed{\Ugen} \subseteq \PP(\kfield^{[n]}/V) \end{equation}
parametrising choices of $[\ul{u}+V]$ such that $V^\prime \colonequals V + \kfield \cdot (\ul{u},1)$ is coordinate-general, and we have a natural identification:
\[ \Ugen = \Gcal(\text{A}^{\!\star})/\Gm. \]

\proofstep{Locally-closed loci $H_j$} \label{step: locally closed} 
We now impose condition ($\text{B}_j$) on $V^\prime = V + \kfield \cdot (\ul{u},1)$. We first calculate:
\begin{align*}
p_{I_j^\prime}(V^\prime) & = p_{I_j}(V) + \kfield \cdot (p_{I_j}(\ul{u}),1) \\
& = V_j + \kfield \cdot (p_{I_j}(\ul{u}),1).	
\end{align*}
On the other hand, the reasoning in Step~\ref{step: open locus} now applied to $V_j^\prime$ shows that there exists
\[ \boxed{\ul{w}_j} \in \kfield^{I_j} \]
such that
\[ V_j^\prime = \ul{t}_j V_j + \kfield \cdot (\ul{w}_j,1) \]
and the image of $\ul{w}_j$ in the quotient $\kfield^{I_j}/\ul{t}_j V_j$ is uniquely defined and nonzero. It follows that if we are to have $p_{I_j^\prime}(V^\prime) = \ul{t}_j^\prime V_j^\prime$ then $\ul{t}_j^\prime$ must take the form
\[ \ul{t}_j^\prime = (\ul{t}_j^{-1} ,s_j) \in \Gm^{I_j^\prime} = \Gm^{I_j} \times \Gm \]
for some $s_j \in \Gm$, where $\ul{t}_j$ is the group element described in Step~\ref{step: fixing target point}. This then gives:
\begin{align*} \ul{t}_j^\prime V_j^\prime & = V_j + \kfield \cdot (\ul{t}_j^{-1} \ul{w}_j, s_j) \\
& = V_j + \kfield \cdot (s_j^{-1} \ul{t}_j^{-1} \ul{w}_j,1).	
\end{align*}
Recall that $\ul{w}_j \in \kfield^{I_j}$ is only well-defined after passing to the quotient $\kfield^{I_j}/\ul{t}_j V_j$. We conclude that $V^\prime$ satisfies condition ($\text{B}_j$) if and only if the associated vector $\ul{u}$ is such that the images of $p_{I_j}(\ul{u})$ and $\ul{t}_j^{-1} \ul{w}_j$ in the quotient $\kfield^{I_j}/V_j$ lie in the same $\Gm$-orbit.

To unravel this condition, we first consider the coordinate projection:
\[ p_{I_j} \colon \kfield^{[n]} \to \kfield^{I_j}.\]
This sends $V \subseteq \kfield^{[n]}$ into $V_j \subseteq \kfield^{I_j}$ and thus descends to a linear map on the quotients:
\begin{equation} \label{eqn: linear projection on vector spaces} \kfield^{[n]}/V \to \kfield^{I_j}/V_j. \end{equation}
Consider the open set:
\[ \Ucal_j \colonequals \PP(\kfield^{[n]}/V) \setminus \PP((\kfield^{[n] \setminus I_j}+V)/V).\]
Then \eqref{eqn: linear projection on vector spaces} descends to a morphism:
\begin{equation} \label{eqn: linear projection on projective spaces} \Ucal_j \to \PP(\kfield^{I_j}/V_j).\end{equation}
The locus of $\Gm$-orbits which satisfy $(\text{B}_j)$ is then the preimage of $[\ul{t}_j^{-1} \ul{w}_j+V_j]$ under \eqref{eqn: linear projection on projective spaces}. We denote this:
\begin{equation} \label{eqn: Hj} \boxed{H_j} \subseteq \Ucal_j. \end{equation}
Since $\dim V_j = r$ and $|I_j| = r+2$, we in fact have $\PP(\kfield^{I_j}/V_j) \cong \PP^1$ and since the projection \eqref{eqn: linear projection on projective spaces} is linear it follows that the closure
\[ \ol{H}_j \subseteq \PP(\kfield^{[n]}/V) \]
is a hyperplane.

\subsection{Combining the conditions} \label{sec: dimension reduction proof combining conditions} Summarising, we have the open subset \eqref{eqn: Gm orbits of A bullet} parametrising $\Gm$-orbits of $V^\prime$ satisfying ($\text{A}^{\!\star}$) which are coordinate-general,
\[ \Ucal \subseteq \PP(\kfield^{[n]}/V), \]
and the locally-closed subset \eqref{eqn: Hj} parametrising $\Gm$-orbits of $V^\prime$ which satisfy ($\text{B}_j$),
\[ H_j \subseteq \PP(\kfield^{[n]}/V). \]
We now intersect these subsets, and show that for generic choices of starting data in Step~\ref{step: fixing target point}, the intersection is a single point, thus establishing Proposition \ref{prop: Gm orbit} and Theorem \ref{thm: dimension reduction}.

In Step~\ref{step: intersecting conditions} we set up the intersection problem. In Step~\ref{step: equations for subsets} we then write explicit equations for the above subsets. Finally, the difficult part is to establish the inclusion \eqref{eqn: inclusion of intersection of Hjbar}. We achieve this in Steps~\ref{step: swapping Plucker}~and~\ref{step: I vanishing}, by relating the Pl\"ucker coordinates of two matrices via Grassmannian duality, and then applying the surplus condition.

\proofstep{Intersecting the conditions} \label{step: intersecting conditions} To impose the conditions ($\text{A}^{\! \star}$) and ($\text{B}_j$) simultaneously, we simply form the intersection of the above sets. This gives the identity
\[ \Gcal(\text{A}^{\!\star},\text{B}_j)/\Gm = \Ucal \cap \bigcap_{j \in [k]} H_j .\]
However since points of $\ol{H}_j \setminus H_j$ corresponds to subspaces $V^\prime$ which are not coordinate-general, we in fact have\, $\Ucal \cap (\ol{H}_j \setminus H_j) = \emptyset$, and so
\begin{equation} \label{eqn: fibre as intersection of open and closed} \Gcal(\text{A}^{\!\star},\text{B}_j)/\Gm = \Ucal \cap \bigcap_{j \in [k]} \ol{H}_j .\end{equation}
The intersection of the $\ol{H}_j$ is an intersection of hyperplanes in projective space, of expected dimension zero. Consequently, it is either infinite or a singleton. It follows that $\Gcal(\text{A}^{\!\star},\text{B}_j)/\Gm$ is either infinite, or a singleton, or empty. 

However since $\Gcal(\text{A}^{\!\star},\text{B}_j)/\Gm$ is also a generic fibre of the map \eqref{eq:MapToFiberProduct} between varieties of the same dimension, it cannot be infinite. Therefore, it is either a singleton or empty. This is the key consequence of the preceding Steps~\ref{step: fibre counting}--\ref{step: intersecting conditions}. In the remaining Steps~\ref{step: equations for subsets}--\ref{step: I vanishing} we show that it is in fact nonempty, and therefore a singleton. The key result is:
\begin{proposition} \label{prop: inclusion of intersection of Hjbar} For generic choices of starting data in Step~\ref{step: fixing target point}, we have:
\begin{equation} \label{eqn: inclusion of intersection of Hjbar} \bigcap_{j \in [k]} \ol{H}_j \subseteq \Ucal. \end{equation}
\end{proposition}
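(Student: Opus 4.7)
The plan is to convert the set-theoretic inclusion \eqref{eqn: inclusion of intersection of Hjbar} into a family of determinant nonvanishings, and then to prove those nonvanishings via a Plücker duality swap combined with the surplus condition (available through \eqref{assumption count nonzero} and \Cref{prop: nonzero implies surplus}).

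I would first write explicit equations. Fix a basis matrix $M\in\kfield^{r\times n}$ for $V$, so that $V' = V + \kfield\cdot(\ul u,1)$ has basis matrix $\left(\begin{smallmatrix} M & 0 \\ \ul u & 1\end{smallmatrix}\right)$. Expanding the $\sigma$-th Plücker coordinate of $V'$ along the last row, for $\sigma\subseteq [n+1]$ with $|\sigma|=r+1$, yields either a nonvanishing Plücker of $V$ (if $n+1\in\sigma$) or the linear form $L_\sigma(\ul u) := \det\left(\begin{smallmatrix}M_\sigma \\ \ul u_\sigma\end{smallmatrix}\right)$ descending to $\kfield^{[n]}/V$ (if $n+1\notin\sigma$). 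Thus $\PP(\kfield^{[n]}/V)\setminus\Ucal = \bigcup_{\sigma\in\binom{[n]}{r+1}}\{L_\sigma=0\}$. Each $\ol H_j$ is likewise cut out by an explicit linear form $\ell_j$ built from $V_j$, $\ul t_j$, and $\ul w_j$ via Step~\ref{step: locally closed}. Since $\dim\PP(\kfield^{[n]}/V) = k$ and Step~\ref{step: intersecting conditions} has already ruled out that $\bigcap_{j\in[k]}\ol H_j$ is positive-dimensional, \eqref{eqn: inclusion of intersection of Hjbar} becomes equivalent to the following assertion: for every $\sigma\in\binom{[n]}{r+1}$, the $k+1$ linear forms $\ell_1,\ldots,\ell_k,L_\sigma$ on the $(k+1)$-dimensional vector space $\kfield^{[n]}/V$ are linearly independent for generic starting data; equivalently, the $(k+1)\times(k+1)$ coefficient matrix $A_\sigma$ they assemble into has nonzero determinant.

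The technical heart is the evaluation of $\det A_\sigma$. Its rows are supported on the coordinate subsets $I_j$ (for the $\ell_j$) and $\sigma$ (for $L_\sigma$), and they encode pairings with fixed subspaces coming from the starting data --- exactly the input for a Plücker swap. Using Grassmannian duality, I would reinterpret $\det A_\sigma$, up to sign and an invertible factor built from already-nonvanishing Plücker coordinates of $V$ and the $V_j$, as a specific Plücker coordinate of a dual matrix $A$ whose rows are constructed directly from $V$, the $V_j$, and the $\ul w_j$. The desired conclusion then reduces to \Cref{prop: A has nonvanishing Plucker}, which asserts that $A$ itself is coordinate-general for generic starting data.

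The main obstacle is this last nonvanishing assertion: a priori, an individual Plücker coordinate of $A$ could see too few independent parameters to avoid identical vanishing. Concretely, such a coordinate vanishes identically precisely when its combinatorial support forces some block of rows indexed by a subset $J\subseteq[k]$ into the coordinate subspace indexed by $\bigcup_{j\in J}I_j$, which can happen only if $\abs{\bigcup_{j\in J}I_j} < |J|+r+1$. This is exactly what the surplus condition rules out, and the surplus condition in turn holds for $\Ical$ by assumption \eqref{assumption count nonzero} combined with \Cref{prop: nonzero implies surplus}. Assembling these inputs and appealing to Step~\ref{step: intersecting conditions} to upgrade the resulting nonvanishing into the desired inclusion yields \eqref{eqn: inclusion of intersection of Hjbar}.
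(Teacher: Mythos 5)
Your proposal is correct and recovers essentially the paper's argument: the same explicit equations for $\Ucal$ and $\ol{H}_j$, the same $\Ical$-vanishing matrix $A$, the same Grassmannian/Pl\"ucker duality swap, and the same reduction to \Cref{prop: A has nonvanishing Plucker} plus the surplus condition via \eqref{assumption count nonzero} and \Cref{prop: nonzero implies surplus}. The paper organizes the duality step slightly differently --- rather than computing $(k+1)\times(k+1)$ coefficient determinants of the forms $\ell_1,\ldots,\ell_k,L_\sigma$ (which requires choosing the testing basis $T$ with $\lvert T\cap\sigma\rvert=1$ to reduce to a single Pl\"ucker coordinate of $A$), it fixes an element $\ul{u}\in\bigcap_j\ol{H}_j$, forms the $n\times(r+1)$ matrix $B=(V\ \,\ul{u})$ with $AB=0$, and applies duality to the exact sequence $0\to\kfield^{r+1}\xrightarrow{B}\kfield^{[n]}\xrightarrow{A}\kfield^{k}\to0$; this also sidesteps your appeal to Step~\ref{step: intersecting conditions} ruling out a positive-dimensional $\bigcap_j\ol{H}_j$, which is not quite what that step establishes (it concerns $\Ucal\cap\bigcap_j\ol{H}_j$) and is not actually needed for the equivalence you use.
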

Assuming Proposition \ref{prop: inclusion of intersection of Hjbar}, we obtain an identification
\[ \Gcal(\text{A}^{\!\star},\text{B}_j)/\Gm = \Ucal \cap \bigcap_{j \in [k]} \ol{H}_j = \bigcap_{j \in [k]} \ol{H}_j. \]
We have already observed that this cannot be infinite. Therefore it must be a singleton, which establishes Proposition \ref{prop: Gm orbit} and hence Theorem \ref{thm: dimension reduction}.

\begin{remark} \label{rmk: proof easier if conjecture known} Showing that $\Gcal(\text{A}^{\!\star},\text{B}_j)/\Gm$ is nonempty (and therefore a singleton) is straightforward if the positivity conjecture (Conjecture \ref{conj: positivity}) is assumed. Recall from \eqref{assumption count nonzero} that we can assume $d_{r,n}(\Ical) \neq 0$. By Proposition \ref{prop: nonzero implies surplus} this implies that $\Ical$ satisfies the surplus condition, and it follows that $\Ical^\prime$ also satisfies the surplus condition. Assuming the conjecture, we conclude that $d_{r+1,n+1}(\Ical^\prime) \neq 0$. Then \eqref{eq:MapToFiberProduct} must be dominant, since if it is not then neither is $\fgt_{\Ical^\prime}$. Thus, the generic fibre of \eqref{eq:MapToFiberProduct} is nonempty.
\end{remark}

It remains to prove Proposition \ref{prop: inclusion of intersection of Hjbar}. For this it is enough to work with the preimages, that is with the representatives $\ul{u} \in \kfield^{[n]}$ rather than the classes $[\ul{u}+V] \in \PP(\kfield^{[n]}/V)$. We adopt this perspective from here on.

\proofstep{Equations for the subsets} \label{step: equations for subsets}
We begin by giving precise equations for the subsets $\Ucal$ and $\ol{H}_j$. We begin with $\Ucal$, defined in Step~\ref{step: open locus} above. Fix a basis $\ul{v}_1,\ldots,\ul{v}_r \in \kfield^{[n]}$ for the subspace $V$ so that $V$ is the column span of the following $n \times r$ matrix, which we denote by the same symbol:
\[ 
\boxed{V} = \left( \begin{array}{ccc}
\!\! \vline & & \vline \!\! \\
\! \ul{v}_1 & \cdots & \ul{v}_r \!\!\! \\
\!\! \vline & & \vline \!\!
\end{array}
\right).
\]
Then given $\ul{u}=(u_1,\ldots,u_n) \in \kfield^{[n]}$ the subspace $V^\prime = V + \kfield \cdot (\ul{u},1)$ is the column span of the following $(n\!+\!1) \times (r\!+\!1)$ matrix:
\[ 
\boxed{V^\prime} = \left( \begin{array}{cccc}
\!\! \vline & & \vline \!\! & u_1 \!\! \\
\! \ul{v}_1 & \cdots & \ul{v}_r \!\!\! & \vdots \! \\
\!\! \vline & & \vline \!\! & u_n \!\! \\
0 & \cdots & 0 & 1
\end{array}
\right)
= \left( \begin{array}{cccc}
& & & \\
& V & & \ul{u} \\
& & & \\
0 & \cdots & 0 & 1 	
\end{array}
\right).
\]
The open subset $\Ucal$ is the locus where $V^\prime$ is coordinate-general, which is precisely the locus where all the $(r\!+\!1) \times (r\!+\!1)$ minors of the above matrix do not vanish. Here the $u_1,\ldots,u_n$ are coordinates on $\kfield^{[n]}$ while the entries of $V$ are fixed scalars.

Such a minor is produced by selecting $r\!+\!1$ rows, and there are two cases to consider. If the final row is not selected, then the resulting inequation is linear in the $u_i$. If the final row is selected, then the resulting equation does not involve the $u_i$ at all, and instead reduces to one of the inequations guaranteeing that $V$ is coordinate-general.

We only need consider the first case. Given a subset $S \in \binom{[n]}{r+1}$ we let
\[ F_S(\ul{u}) \colonequals \det  \left( V|_S \ \ \ul{u}|_S \right) \]
where the restriction denotes restriction of rows. Then the open set $\Ucal$ is given by:
\begin{equation} \label{eqn: equation for U} \Ucal = \left\{ F_S(\ul{u}) \neq 0 \, \colon \text{for all } S \in \binom{[n]}{r\!+\!1} \right\}.\end{equation}

We next describe $H_j$ and $\ol{H}_j$. The condition for $\ul{u}$ to belong to $H_j$ is that there exists $s \in \Gm$ such that
\[ p_{I_j}(\ul{u}) \in V_j + s \ul{t}_j^{-1} \ul{w}_j .\]
Passing to the closure $\ol{H}_j$ means allowing $s \in \kfield$ and so the condition becomes linear:
\[ p_{I_j}(\ul{u}) \in V_j + \kfield \cdot \ul{t}_j^{-1} \ul{w}_j . \]
This is equivalent to the determinant of an $(r+2) \times (r+2)$ matrix being zero:
\[ \det \left( V|_{I_j} \ \ \ul{t}_j^{-1} \ul{w}_j \ \ \ul{u}|_{I_j} \right) = 0.\]
Expanding along the final column, this gives
\begin{equation} \label{eqn: for Hjbar} \ol{H}_j = \left\{ \sum_{i \in I_j} \sgn(i\!\in\!I_j)\, F_{I_j \setminus i}(\ul{t}_j^{-1} \ul{w}_j) \cdot u_i = 0 \right\}, \end{equation}
where $\sgn(i\!\in\!I_j) = 1$ (respectively $-1$) if $i$ appears in an odd (respectively even) position in $I_j$. Intersecting the \eqref{eqn: for Hjbar} for all $j \in [k]$ we obtain a system of linear equations in the variables $u_1,\ldots,u_n$. We encode this system in a single $k \times n$ matrix $A$ as follows:
\begin{equation} \label{eqn: defn of A} \boxed{A_{ji}} \colonequals \begin{cases} \sgn(i\!\in\!I_j)\, F_{I_j \setminus i}(\ul{t}_j^{-1} \ul{w}_j) & \text{if $i \in I_j$}, \\ 0 & \text{if $i \not\in I_j$}. \end{cases} \end{equation}
Then we have:
\[ \bigcap_{j \in [k]} \ol{H}_j = \left\{ A \ul{u} = \ul{0} \right\}\!. \]

\proofstep{Swapping the Pl\"ucker coordinates} \label{step: swapping Plucker} Recall from Step~\ref{step: intersecting conditions} that Theorem \ref{thm: dimension reduction} has been reduced to Proposition \ref{prop: inclusion of intersection of Hjbar}, which asserts that the inclusion \eqref{eqn: inclusion of intersection of Hjbar} holds for generic choices of starting data in Step~\ref{step: fixing target point}.

\begin{proposition} \label{prop: A nonvanishing Plucker enough} Suppose that $A$ has all Pl\"ucker coordinates nonvanishing. Then the inclusion \eqref{eqn: inclusion of intersection of Hjbar} holds. \end{proposition}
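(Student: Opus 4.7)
The plan is to interpret the quantities $F_S(\ul u)$ directly as the Pl\"ucker coordinates of $\ker(A)$ in a suitable basis, and then to invoke Grassmannian duality to identify these with the maximal minors of $A$.

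First I would verify $V\subseteq\ker(A)$: for each basis vector $\ul v_\ell$ of $V$, the entry $(A\ul v_\ell)_j$ is, up to sign, the determinant $\det\!\left(V|_{I_j}\ \ \ul t_j^{-1}\ul w_j\ \ \ul v_\ell|_{I_j}\right)$, which vanishes because its last column duplicates the $\ell$th column of $V|_{I_j}$. Since the hypothesis forces $\rk A=k$, we obtain $\dim\ker(A)=n-k=r+1$, so $\ker(A)/V$ is one-dimensional; the intersection $\bigcap_j\ol H_j$ is represented by any $\ul u\in\ker(A)\setminus V$, and $(\ul v_1,\ldots,\ul v_r,\ul u)$ is then a basis of $\ker(A)$.

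The key observation is that the maximal minors of the $n\times(r+1)$ matrix with these columns are precisely the $F_S(\ul u)$:
\[\pi_S(\ker(A))=\det\!\left(V|_S\ \ \ul u|_S\right)=F_S(\ul u),\qquad S\in\binom{[n]}{r+1}.\]
Hence the condition $\ul u\in\Ucal$ from \eqref{eqn: equation for U} is equivalent to $\ker(A)$ being coordinate-general in $\Gr(r+1,\kfield^{[n]})$.

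The final step, which I regard as the only non-routine input, is the self-duality of the Grassmannian: $\ker(A)=\mathrm{rowspan}(A)^\perp$, and the two Pl\"ucker embeddings are related by $\pi_S(\ker A)=\pm\,\pi_{[n]\setminus S}(\mathrm{rowspan}(A))$. The right-hand side is, up to sign, the $k\times k$ minor of $A$ on the columns indexed by $[n]\setminus S$, which is nonzero by hypothesis. Therefore $F_S(\ul u)\neq 0$ for every $S\in\binom{[n]}{r+1}$, so $\ul u\in\Ucal$, proving $\bigcap_j\ol H_j\subseteq\Ucal$. Sign conventions in the duality are irrelevant since only nonvanishing is needed.
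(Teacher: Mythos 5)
Your proof is correct and follows essentially the same route as the paper's: both hinge on observing $V\subseteq\ker A$, assembling the matrix $B=(V\ \ \ul u)$ with $AB=0$, and invoking Grassmannian duality to transfer nonvanishing of the Pl\"ucker coordinates from $A$ to $B$, whose maximal minors are exactly the $F_S(\ul u)$. The only cosmetic difference is that you make the rank/dimension count ($\rk A = k$, so $\ker A$ is exactly the column span of $B$) explicit where the paper phrases the same fact as a short exact sequence $0\to\kfield^{r+1}\xrightarrow{B}\kfield^n\xrightarrow{A}\kfield^k\to 0$.
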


\begin{proof} Fix an element
\[ \ul{u} \in \bigcap_{j \in [k]} \ol{H}_j.\] 
By the previous step, this means precisely that
\[ A \ul{u} = \ul{0}.\]
We must show that $\ul{u} \in \Ucal$. Note that given any $\ul{v} \in \kfield^{[n]}$ we have:
\begin{align*} A \ul{v} = \ul{0} \Leftrightarrow \ & \det \left( V|_{I_j} \ \ \ul{t}_j^{-1} \ul{w}_j \ \ \ul{v}|_{I_j} \right) = 0 \qquad \text{ for all $j \in [k]$} \\
\Leftrightarrow \ & \ \, \ul{v}|_{I_j} \in V|_{I_j} + \kfield \cdot \ul{t}_j^{-1} \ul{w}_j \qquad \quad \, \text{ \, for all $j \in [k]$}.
\end{align*}
In particular if $\ul{v} \in V$ then $A \ul{v} = \ul{0}$. Taking the basis elements $\ul{v}_1,\ldots,\ul{v}_r$ from Step~\ref{step: equations for subsets}, if we define the $n \times (r+1)$ matrix
\[ \boxed{B} \colonequals \left( \begin{array}{cccc}
\!\! \vline & & \vline \!\! & \vline \!\! \\
\! \ul{v}_1 & \cdots & \ul{v}_r \!\!\! & \ul{u} \! \\
\!\! \vline & & \vline \!\! & \vline \!\! \end{array} \right) = \left( V \ \ \ul{u} \right)\]
then we have:
\[ AB = 0 .\]
Since $k+(r+1)=n$ the matrices $A$ and $B$ have complementary sizes. Since $A$ has all Pl\"ucker coordinates nonvanishing it in particular has full rank, while $B$ has full rank since $\ul{u} \not\in V$. Therefore they fit into a short exact sequence:
\[ 0 \to \kfield^{r+1} \xrightarrow{B} \kfield^n \xrightarrow{A} \kfield^{k} \to 0. \]
It follows from Grassmannian duality that their Pl\"ucker coordinates coincide, up to reindexing and signs (see e.g.\ \cite[Proposition~3.1]{Magyar}). Since $A$ has all Pl\"ucker coordinates nonvanishing, it follows that $B$ does as well. But by \eqref{eqn: equation for U} this is precisely the statement that $\ul{u} \in \Ucal$.\end{proof}

By Proposition \ref{prop: A nonvanishing Plucker enough}, it remains to prove:

\begin{proposition} \label{prop: A has nonvanishing Plucker} For generic choices of starting data in Step~\ref{step: fixing target point}, the matrix $A$ has all Pl\"ucker coordinates nonvanishing.\end{proposition}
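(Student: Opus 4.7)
My plan is to reformulate Pl\"ucker nonvanishing of $A$ in terms of the coordinate-generality of $\ker A$, and then to establish the latter via a birationality argument, the main technical input being an injectivity statement that follows from the surplus condition.

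By the Grassmannian duality already invoked in \Cref{prop: A nonvanishing Plucker enough}, the $k\times k$ minors of $A$ agree up to signs with the $(r\!+\!1)\times(r\!+\!1)$ minors of any matrix whose columns span $\ker A$, provided $A$ has full row rank $k$. Hence the statement becomes: for generic starting data, $\ker A$ is an $(r\!+\!1)$-dimensional coordinate-general subspace of $\kfield^{[n]}$. That $V\subseteq\ker A$ follows from a direct determinantal expansion: $A_{j,\bullet}\cdot\ul{v}$ equals, up to sign, the determinant of the $(r\!+\!2)\times(r\!+\!2)$ matrix with columns $V|_{I_j}$, $\ul{t}_j^{-1}\ul{w}_j$, and $\ul{v}|_{I_j}$, which vanishes for $\ul{v}\in V$ because $\ul{v}|_{I_j}\in V_j$. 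Writing $\ker A=V+\kfield\ul{u}_0$, coordinate-generality of $\ker A$ amounts to $F_S(\ul{u}_0)\neq 0$ for every $S\in\binom{[n]}{r+1}$, equivalently $[\ul{u}_0]\in\Ucal$ in the sense of \eqref{eqn: equation for U}. From Steps \ref{step: equations for subsets}--\ref{step: intersecting conditions}, $[\ul{u}_0]$ is the (generically unique) point of $\bigcap_j\bar{H}_j\subseteq\PP(\kfield^{[n]}/V)$.

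I will conclude by a dominance argument. Consider the rational map
\[ \varphi:\prod_{j=1}^{k}\PP(\kfield^{I_j}/V_j)\;\dashrightarrow\;\PP(\kfield^{[n]}/V),\qquad ([y_j])_j\longmapsto\bigcap_{j=1}^{k}\bar{H}_j, \]
together with the candidate inverse
\[ \psi:\PP(\kfield^{[n]}/V)\;\dashrightarrow\;\prod_{j=1}^{k}\PP(\kfield^{I_j}/V_j),\qquad [\ul{u}]\longmapsto([f_j(\ul{u})])_j, \]
where $f_j\colon\kfield^{[n]}/V\to\kfield^{I_j}/V_j$ descends from the coordinate projection $p_{I_j}$. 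Both source and target are irreducible of dimension $k$, and inspecting the defining condition $f_j([\ul{u}])=[y_j]$ of $\bar{H}_j$ shows that $\varphi\circ\psi$ and $\psi\circ\varphi$ are identities wherever both sides are defined. Consequently $\varphi$ is birational, so for generic starting data $[\ul{u}_0]$ lies in the dense open $\Ucal$, proving the proposition.

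The main obstacle is verifying that $\psi$ is genuinely defined on a dense open and that $\varphi$ is generically single-valued. Both reduce to injectivity of the combined linear map $\kfield^{[n]}/V\to\prod_j\kfield^{I_j}/V_j$, equivalently
\[ \bigcap_{j\in[k]}\bigl(V+\kfield^{[n]\setminus I_j}\bigr)\;=\;V\qquad\text{in }\kfield^{[n]}. \]
This is the vanishing statement that Step \ref{step: I vanishing} must handle, and it is precisely where the assumption $d_{r,n}(\Ical)\neq 0$ from \eqref{assumption count nonzero} enters the argument: by \Cref{prop: nonzero implies surplus} it yields the surplus condition on $\Ical$, and then \Cref{thm: surplus iff matching} supplies perfect matchings in every pruned configuration graph $\Gamma(\Ical)\!\setminus\!S$. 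A support-decomposition argument applied to any $\ul{u}$ in the intersection above, combined with these matchings and the coordinate-generality of $V$, should force $\ul{u}\in V$, completing the proof.
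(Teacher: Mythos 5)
Your approach differs substantially from the paper's. Where the paper works entirely at the level of the matrix $A$ --- parametrising the space of $\Ical$-vanishing matrices, showing by a direct determinant expansion (as a sum over perfect matchings of $\Gamma(\Ical)\!\setminus\!S$) that the generic such matrix has nonvanishing Pl\"ucker coordinates, and then showing the map from starting data to $\Ical$-vanishing matrices is dominant --- you instead try to show directly that the point $[\ul{u}_0]=\bigcap_j\ol{H}_j$ is a generic point of $\PP(\kfield^{[n]}/V)$ via a birationality argument for the rational map $\varphi$. The front end of your argument (Grassmannian duality, $V\subseteq\ker A$, reduction to $[\ul{u}_0]\in\Ucal$) is sound and is essentially the content of Proposition~\ref{prop: A nonvanishing Plucker enough}.

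However, there is a genuine gap in the reduction that is supposed to drive the birationality. You claim that both the domain-of-definition issue for $\psi$ and the generic single-valuedness of $\varphi$ ``reduce to injectivity of the combined linear map'' $\kfield^{[n]}/V\to\prod_j\kfield^{I_j}/V_j$. This is not correct. Writing $W_j := f_j^*\bigl((\kfield^{I_j}/V_j)^*\bigr)\subseteq(\kfield^{[n]}/V)^*$ for the $2$-dimensional spaces of linear forms giving the pencils $\ol{H}_j$, what is actually needed for $\psi$ to be dominant (equivalently, for the intersection $\bigcap_j\ol{H}_j$ to be a single point for generic choices of the forms $\ell_j\in W_j$) is the Rado/Hall-type condition
\[
\dim\Bigl(\sum_{j\in J}W_j\Bigr)\geqslant|J|\qquad\text{for every }J\subseteq[k].
\]
Injectivity of the combined map is only the $J=[k]$ instance (it says $\sum_{j\in[k]}W_j$ is the whole $(k+1)$-dimensional dual), and this does not imply the condition for smaller $J$: for instance, if three of the $W_j$ coincided, $\varphi$ would not be generically single-valued even though the total span might still be everything. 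Ruling this out requires a nontrivial argument using the surplus condition and genericity of $V$, which you have not given. The second gap is that even the $J=[k]$ instance (your displayed vanishing statement $\bigcap_j(V+\kfield^{[n]\setminus I_j})=V$) is not actually proved; the ``support-decomposition argument'' in your final sentence is a placeholder. The paper's approach sidesteps all of this: the perfect matchings produced by the surplus condition enter not through a linear-algebraic Hall condition but as explicit nonvanishing monomials in the polynomial $\det(C(\ul{x})|_{[n]\setminus S})$, which is cleaner and immediately yields all the Pl\"ucker nonvanishings at once.
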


To prove this, first recall that the choice of starting data in Step~\ref{step: fixing target point} may be obtained by choosing the following data independently:
\begin{itemize}
\item An $n \times r$ matrix $V$ (see Step~\ref{step: equations for subsets}).
\item A torus element $\ul{t}_j \in \Gm^{I_j}$ for each $j \in [k]$ (see Steps~\ref{step: fixing target point}~and~\ref{step: locally closed}).
\item A vector $\ul{w}_j \in \kfield^{I_j}$ for each $j \in [k]$ (see Step~\ref{step: locally closed}).
\end{itemize}
Given these, the subspace $V$ is defined as the column span of the matrix $V$, the subspace $V_j$ is the column span of the matrix $V|_{I_j}$, and the subspace $V_j^\prime$ is defined by:
\[ V_j^\prime \colonequals \ul{t}_j V_j + \kfield \cdot (\ul{w}_j,1). \]
The above data is subject to open conditions which ensure that the associated subspaces are coordinate-general. We now show that generic such choices produce a matrix $A$ with all Pl\"ucker coordinates nonvanishing, verifying Proposition \ref{prop: A has nonvanishing Plucker}.

\proofstep{Matrices with $\Ical$-vanishing} \label{step: I vanishing}

\begin{definition} A $k \times n$ matrix $C$ has \textbf{$\Ical$-vanishing} if and only if:
\[ i \in I_j \Rightarrow C_{ji} = 0. \]	
\end{definition}
The matrix $A$ defined in \eqref{eqn: defn of A} has $\Ical$-vanishing by definition. In this final step, we establish the following two results concerning such matrices.

\begin{lemma} \label{lem: general I vanishing matrix is coordinate-general} A general matrix with $\Ical$-vanishing has all Pl\"ucker coordinates nonvanishing.
\end{lemma}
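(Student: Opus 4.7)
The plan is to expand each Pl\"ucker coordinate of an $\Ical$-vanishing matrix into a signed sum of monomials indexed by perfect matchings of a pruned configuration graph, and then invoke the surplus condition to ensure the resulting polynomial is not identically zero.

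Let $C$ be an $\Ical$-vanishing matrix, so its free entries $C_{ji}$ are precisely those for which $(i,j)$ is an edge of $\Gamma(\Ical)$. Pl\"ucker coordinates of $C$ are indexed by subsets $T \in \binom{[n]}{k}$; setting $S \colonequals [n] \setminus T \in \binom{[n]}{r+1}$, I would Leibniz-expand
\[ p_T(C) = \det(C|_T) = \sum_\sigma \sgn(\sigma) \prod_{j=1}^{k} C_{j,\sigma(j)}, \]
where the sum ranges over bijections $\sigma \colon [k] \to T$. Since $C_{j,\sigma(j)} = 0$ whenever $\sigma(j) \notin I_j$, only those $\sigma$ with $\sigma(j) \in I_j$ for every $j$ contribute, and these are precisely the perfect matchings of the pruned configuration graph $\Gamma(\Ical) \!\setminus\! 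S$. Distinct matchings yield distinct squarefree monomials in the independent variables $\{C_{ji} : (i,j) \in E(\Gamma(\Ical))\}$, so no cancellation is possible, and $p_T$ is a nonzero polynomial in the free entries of $C$ if and only if $\Gamma(\Ical) \!\setminus\! S$ admits a perfect matching.

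To conclude, I would verify that $\Gamma(\Ical) \!\setminus\! S$ admits a perfect matching for every $S \in \binom{[n]}{r+1}$. By \Cref{thm: surplus iff matching} this is equivalent to $\Ical$ satisfying the surplus condition, and the standing assumption \eqref{assumption count nonzero} together with \Cref{prop: nonzero implies surplus} delivers exactly this. Intersecting over $T$ the finitely many dense open subsets $\{p_T \neq 0\}$ of the affine space of $\Ical$-vanishing matrices then carves out the desired nonempty dense open locus on which every Pl\"ucker coordinate is nonvanishing.

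I do not anticipate a serious obstacle: the monomial-to-matching correspondence is purely combinatorial, and the only nontrivial input, the surplus condition, was secured at the very outset of the proof of \Cref{thm: dimension reduction} via the reduction to the case $d_{r,n}(\Ical) \neq 0$.
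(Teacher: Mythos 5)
Your proposal is correct and follows essentially the same route as the paper's proof: Leibniz-expand the Pl\"ucker coordinate, observe that the surviving terms are indexed by perfect matchings of $\Gamma(\Ical)\!\setminus\!S$ and cannot cancel since they are distinct monomials, and guarantee nonemptiness of the set of matchings via the chain $d_{r,n}(\Ical)\neq 0 \Rightarrow$ surplus condition $\Rightarrow$ matching for every $S$ (\Cref{prop: nonzero implies surplus} and \Cref{thm: surplus iff matching}). The only cosmetic difference is that you index Pl\"ucker coordinates by $T\in\binom{[n]}{k}$ rather than by the complementary $S\in\binom{[n]}{r+1}$.
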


\begin{lemma} \label{lem: can achieve a general matrix with I vanishing} Given a general matrix $C$ with $\Ical$-vanishing, there exists a choice of $V, \ul{t}_j, \ul{w}_j$ such that the associated matrix $A$ defined in \eqref{eqn: defn of A} is equal to $C$.	
\end{lemma}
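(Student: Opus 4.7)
The plan is to reverse-engineer the construction, recovering $(V,\ul{t}_j,\ul{w}_j)$ from $C$ by linear algebra on $\ker C$.

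By \Cref{lem: general I vanishing matrix is coordinate-general}, a general matrix $C$ with $\Ical$-vanishing has all Pl\"ucker coordinates nonvanishing. In particular $C$ has full rank $k$, so $\ker C \subseteq \kfield^{[n]}$ has dimension $n-k=r+1$; by Grassmannian duality the Pl\"ucker coordinates of $\ker C$ are (up to reindexing and signs) those of $C$, hence also nonvanishing, so $\ker C \in \Gr^{\circ}(r+1,\kfield^{[n]})$. The key observation driving the construction is that, unpacking \eqref{eqn: defn of A}, the row $A_j := (A_{ji})_{i \in I_j}$ is (up to sign) the covector cutting out the hyperplane $V_j + \kfield\cdot \ul{t}_j^{-1}\ul{w}_j \subseteq \kfield^{I_j}$; in particular $A_j$ annihilates $V_j$, so $AV=0$, and any solution is forced to have the columns of $V$ lying in $\ker C$.

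I therefore choose $V$ to be a generic $r$-dimensional subspace of $\ker C$. A short cofactor expansion on a basis matrix $U$ for $\ker C$ shows that nonvanishing of its maximal minors forces the restriction $U|_S$ to have full rank $r$ for every $S \in \binom{[n]}{r}$; then Cauchy--Binet implies that the Pl\"ucker coordinates of a generic $V \in \Gr(r,\ker C)$ are all nonzero, so both $V$ and each restriction $V_j := V|_{I_j}$ are coordinate-general. For each $j\in [k]$, set $H_j := \ker C_j \subseteq \kfield^{I_j}$ where $C_j := (C_{ji})_{i \in I_j}$: since $CV=0$ we have $V_j \subseteq H_j$, and since $C_j \neq 0$ for general $C$ the subspace $H_j$ is a hyperplane of dimension $r+1$. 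Pick any $\ul{\xi}_j^{0} \in H_j \setminus V_j$; by the key observation, the row of $A$ computed from \eqref{eqn: defn of A} with $\ul{t}_j^{-1}\ul{w}_j = \ul{\xi}_j^{0}$ is a nonzero scalar multiple $\lambda_j C_j$, because both vectors cut out the same hyperplane $H_j$. Using linearity of $F_{I_j\setminus i}$ in its last argument, rescaling to $\ul{\xi}_j := \lambda_j^{-1}\ul{\xi}_j^{0}$ produces $A_j = C_j$ exactly; finally, take $\ul{t}_j := (1,\ldots,1) \in \Gm^{I_j}$ and $\ul{w}_j := \ul{\xi}_j$ to complete the construction.

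The main obstacle is transferring coordinate-generality from $\ker C$ to the smaller subspace $V$; once this is established the rest is routine. As a byproduct the chosen $(V,\ul{t}_j,\ul{w}_j)$ furnishes valid starting data in the sense of Step~\ref{step: fixing target point}: the Pl\"ucker coordinates of $V_j' = \ul{t}_j V_j + \kfield \cdot (\ul{w}_j,1)$ decompose into Pl\"ucker coordinates of $V_j$ (nonzero by the above) and, up to nonzero torus factors, the entries $C_{ji}$ with $i \in I_j$ (nonzero for general $C$). This observation is what ultimately lets \Cref{lem: can achieve a general matrix with I vanishing} combine with \Cref{lem: general I vanishing matrix is coordinate-general} to yield \Cref{prop: A has nonvanishing Plucker}.
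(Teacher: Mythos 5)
Your proposal is correct and follows essentially the same construction as the paper: build $V$ as an $r$-dimensional subspace of $\ker C$, build $\ul{t}_j^{-1}\ul{w}_j$ as an element of $\ker C_j|_{I_j}$ outside $V|_{I_j}$, and identify the resulting $A$-row with $C_j$ via the cofactor/complementary-Pl\"ucker relation. The one notable divergence is your extra paragraph devoted to ensuring that the chosen $(V,\ul{t}_j,\ul{w}_j)$ is itself coordinate-general starting data (via the full-rank-on-$S$ observation and Cauchy--Binet). The paper sidesteps this entirely, with the remark that it is not necessary to keep the starting data inside the coordinate-general locus: once the polynomial map $\Aaff_{V,\ul{t}_j,\ul{w}_j}\to\Aaff_A$ is known to be dominant, its restriction to the open dense subset of coordinate-general inputs is automatically still dominant. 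So your argument is valid but does work the paper avoids. On the other hand, you are more careful than the paper about the scalar ambiguity in the Pl\"ucker-duality step, explicitly rescaling $\ul{\xi}_j$ so that $A_j = C_j$ exactly rather than only projectively; the paper glosses this, implicitly absorbing the scalar into the choice of $\ul{w}_j$. Your ``covector cutting out the hyperplane'' framing of \eqref{eqn: defn of A} is also a slightly cleaner conceptual account of the same cofactor expansion the paper performs.
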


\begin{proof}[Proof~of~Proposition~\ref{prop: A has nonvanishing Plucker}] Assume Lemmas~\ref{lem: general I vanishing matrix is coordinate-general}~and~\ref{lem: can achieve a general matrix with I vanishing}. There is a polynomial map of affine spaces
\begin{equation} \label{eqn: map from V and wj space to A space} \Aaff_{V,\ul{t}_j,\ul{w}_j} \to \Aaff_A \end{equation}
sending choices of $V,\ul{t}_j,\ul{w}_j$ to the associated matrix $A$ with $\Ical$-vanishing. Lemma \ref{lem: can achieve a general matrix with I vanishing} states that this map is dominant, while Lemma \ref{lem: general I vanishing matrix is coordinate-general} states that there is a nonempty open subset of the target consisting of matrices with all Pl\"ucker coordinates nonvanishing. Since the map is dominant, the preimage of this open subset is a nonempty open subset of the domain, and is therefore dense. Thus for a generic choice of $V,\ul{t}_j,\ul{w}_j$ the associated matrix $A$ has all Pl\"ucker coordinates nonvanishing, as required.
\end{proof}

It remains to prove Lemmas~\ref{lem: general I vanishing matrix is coordinate-general}~and~\ref{lem: can achieve a general matrix with I vanishing}.

\begin{proof}[Proof of Lemma \ref{lem: general I vanishing matrix is coordinate-general}] We introduce variables
\[ x_{ji} \]
for $j \in [k]$ and $i \in I_j$. Then the affine space $\Aaff_{\ul{x}}$ parametrises $k \times n$ matrices with $\Ical$-vanishing, and there is a universal such matrix
\[ C(\ul{x}) \]
with $C(\ul{x})_{ji} \colonequals x_{ji}$. Given $S \in \binom{[n]}{r+1}$ we consider the associated Pl\"ucker coordinate:
\begin{equation} \label{eqn: Plucker coordinate of general I vanishing matrix} \det(C(\ul{x})|_{[n]\setminus S}).\end{equation}
This is a degree $k$ homogeneous polynomial in the $x_{ji}$ which we now describe explicitly.

Recall from \eqref{assumption count nonzero} that we are assuming $d_{r,n}(\Ical) \neq 0$. It follows from Proposition \ref{prop: nonzero implies surplus} and Theorem \ref{thm: surplus iff matching} that there exists a perfect matching on $\Gamma(\Ical)\!\setminus\!S$, namely a bijection
\[ \varphi \colon [k] \to [n] \setminus S \]
such that $\varphi(j) \in I_j$ for all $j \in [k]$. There is then a term in \eqref{eqn: Plucker coordinate of general I vanishing matrix} of the form
\[ \pm \prod_{j \in [k]} x_{j,\varphi(j)} \]
and in fact, we have
\[ \det(C(\ul{x})|_{[n] \setminus S}) = \sum_{\varphi} \pm \prod_{j \in [k]} x_{j,\varphi(j)} \]
where the sum is over all perfect matchings of $\Gamma(\Ical)\!\setminus\!S$, and crucially the sum is nonempty. Each term in the sum is a monomial of degree $k$, and these monomials are all distinct and therefore do not cancel. It follows that $\det(C(\ul{x})|_{[n]\setminus S})$ is a nonzero polynomial in the $x_{ji}$ and so the locus
\[ \left\{ \det(C(\ul{x})|_{[n] \setminus S}) \neq 0 \right\} \subseteq \Aaff_{\ul{x}} \]
is a nonempty open set. Intersecting these for all $S$ produces a nonempty open set, which is therefore dense.
\end{proof}

\begin{proof}[Proof~of~Lemma~\ref{lem: can achieve a general matrix with I vanishing}] Fix a generic $k \times n$ matrix $C$ with $\Ical$-vanishing. We must find starting data $V,\ul{t}_j,\ul{w}_j$ such the associated matrix $A$ is equal to $C$.

Note that it is not necessary to ensure that the starting data is coordinate-general, because once we know that the map \eqref{eqn: map from V and wj space to A space} is dominant, it will remain dominant after restricting to the open subset of coordinate-general inputs.

We first determine $V$. We view $C$ as a map:
\[ C \colon \kfield^n \to \kfield^k. \]
The kernel has dimension at least $n-k=r+1$. We can thus choose $r$ linearly independent elements $\ul{v}_1,\ldots,\ul{v}_r$ of the kernel, and let $V$ be their column matrix, so that
\[ C V = \ul{0}.\]
Focusing on the $j$th row $C_j$, the above equation is equivalent to the vanishing of the inner product
\[ C_j \cdot \ul{v}_i = 0 \]
for $i=1,\ldots,r$. Since $C_{ji} = 0$ for $i \not\in I_j$, the entry $v_{ij}$ only appears in this equation if $i \in I_j$. This equation is thus equivalent to:
\[ C_j|_{I_j} V|_{I_j} = \ul{0}. \]
Now choose $\ul{t}_j \in \Gm^{I_j}$ arbitrarily, and suppose we choose $\ul{w}_j \in \kfield^{I_j}$ such that:
\begin{equation} \label{eqn: equation for wj} C_j|_{I_j} \cdot (\ul{t}_j^{-1} \ul{w}_j) = 0. \end{equation}
This gives:
\[ C_j|_{I_j} \left( V|_{I_j} \ \ \ul{t}_j^{-1} \ul{w}_j \right) = \ul{0}.\]
The $1 \times (r+2)$ matrix $C_j|_{I_j}$ has full rank (that is, it is not the zero vector) since $C$ was chosen generically. On the other hand the $(r+2) \times (r+1)$ matrix $(V|_{I_j} \ \ \ul{t}_j^{-1} \ul{w}_j)$ has full rank if and only if
\begin{equation} \label{eqn: inequation for wj} \ul{t}_j^{-1} \ul{w}_j \not\in V|_{I_j}.\end{equation}
Assuming this, since the matrices have complementary sizes, they have the same Pl\"ucker coordinates up to reindexing and signs (as in Step~\ref{step: swapping Plucker} above). The Pl\"ucker coordinates are indexed by a choice of $i \in I_j$, and for each we obtain the identity:
\[ \sgn(i\!\in\!I_j) \det( V|_{I_j \setminus i} \ \ (\ul{t}_j^{-1} \ul{w}_j)|_{I_j \setminus i} ) = C_{ji}.\]
The left-hand side is precisely the $A_{ji}$ as induced by our choice of $V, \ul{t}_j, \ul{w}_j$. We thus obtain $A=C$ as required, assuming we can choose $\ul{w}_j$ such that \eqref{eqn: equation for wj} and \eqref{eqn: inequation for wj} hold. 

But $\Ker C_j \subseteq \kfield^{I_j}$ is a hyperplane, whereas $V|_{I_j} \subseteq \kfield^{I_j}$ is $r$-dimensional, so has codimension~$2$. Therefore:
\[ \Ker C_j \setminus V|_{I_j} \neq \emptyset. \]
We choose any element of this set, then act by $\ul{t}_j$ to obtain the $\ul{w}_j$ which produces the desired $A$.
\end{proof}

This completes the proof of Theorem \ref{thm: dimension reduction}.

\begin{remark} Throughout this paper we assume $r \geqslant 2$, but it is worth reflecting on what the case $r=1$ should mean. In this setting, $X(1,[n])$ parametrises point configurations in $\PP^0 = \pt$ which are linearly general. There is only one such configuration, and trivially it is linearly general, so:
\[ X(1,[n]) = \pt .\]
Less trivially, the description as a Grassmannian quotient (Section \ref{sec: Grassmannians}) still holds:
\[ X(1,[n]) = \Gr^\circ(1,\kfield^{[n]})/\Gm^{[n]} = \PP(\Gm^{[n]})/\Gm^{[n]} = \pt. \]
A projective configuration count is determined by a choice of $k=n-r-1=n-2$ subsets of size $r+2=3$, but we always have $d_{1,n}(\Ical) = 1$ since $\fgt_\Ical$ is the map from a point to a point.

Crucially however, Theorem \ref{thm: dimension reduction} fails for $r=1$. An example with $n=4$ is given by $\Ical = (234,234)$. Then $d_{1,n}(\Ical)=1$, but $d_{2,n+1}(\Ical^\prime) = 0$ since $\Ical^\prime = (2345,2345)$ constraints the same cross-ratio twice.

While such examples are very simple, the failure of the above proof is quite subtle. The Grassmannian constructions carry through and are not even entirely vacuous. The problem, in fact, does not appear until the proof of Lemma~\ref{lem: general I vanishing matrix is coordinate-general}. Here we use the fact that if $d_{r,n}(\Ical) \neq 0$ then the graph $\Gamma(\Ical)\!\setminus\!S$ must admit a perfect matching. This fails when $r=1$, as the above example with $S=34$ attests. This also shows that the analogue of Proposition \ref{prop: nonzero implies surplus} fails when $r=1$.
\end{remark}

\subsection{Reducing the upper bound} \label{sec: combining theorems} Our main Theorems~\ref{thm: upper bound} and \ref{thm: dimension reduction} can be combined. Suppose as in Theorem \ref{thm: dimension reduction} we are given a tuple
\[ \Ical^\prime = (I_1^\prime,\ldots,I_k^\prime) \in \binom{[n\!+\!1]}{(r\!+\!1)+2}^k \]
such that $n+1 \in I_j^\prime$ for all $j \in [k]$. Then setting $I_j \colonequals I_j^\prime \setminus \{ n+1 \}$ and $\Ical = (I_1,\ldots,I_k)$ we obtain:
\begin{equation} \label{eqn: equality of configuration counts reducing upper bound} d_{r+1,n+1}(\Ical^\prime) = d_{r,n}(\Ical). \end{equation}
We now compare the upper bounds for these counts given by Theorem \ref{thm: upper bound}. Fix a subset $S^\prime \subseteq [n+1]$ of size $(r+1)+2$ and assume that $n+1 \in S^\prime$. Let $S \colonequals S^\prime \setminus \{ n+1 \}$. We have an equality of pruned configuration graphs:
\[ \Gamma(\Ical^\prime)\setminus S^\prime = \Gamma(\Ical)\!\setminus\!S.\]
Then Theorem \ref{thm: upper bound} implies that the configuration count \eqref{eqn: equality of configuration counts reducing upper bound} is bounded above by both the number of $r$-weighted transversals and the number of $(r-1)$-weighted transversals of this graph. The following result, which we prove below, shows that the latter bound is more effective:

\begin{proposition} \label{prop: number of weighted transversals monotonic} Fix a bipartite graph $\Gamma$ and a positive integer $m$. Then the number of $m$-weighted transversals of $\Gamma$ is less than or equal to the number of $(m+1)$-weighted transversals of $\Gamma$.\end{proposition}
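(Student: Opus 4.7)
The plan is to exhibit an injection $\phi \colon T_m(\Gamma) \to T_{m+1}(\Gamma)$ by adding a carefully chosen perfect matching of $\Gamma$ to each $m$-weighted transversal. If $T_m(\Gamma) = \emptyset$ the statement is trivial, so I may assume there exists at least one $w_0 \in T_m(\Gamma)$.

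The first step is to extract a perfect matching of $\Gamma$ from $w_0$. The natural device here is to replace each edge $e$ of $\Gamma$ by $w_0(e)$ parallel edges; the result is an $m$-regular bipartite multigraph $\Gamma_{w_0}$ whose underlying simple graph is a subgraph of $\Gamma$. By the classical fact that every $m$-regular bipartite multigraph has a perfect matching (a consequence of Hall's marriage theorem, \Cref{thm: Hall}, applied to the multigraph, or of K\"onig's edge-colouring theorem), $\Gamma_{w_0}$ has a perfect matching $M$. Forgetting multiplicities, $M$ is a perfect matching of $\Gamma$ in the ordinary sense.

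Next I define the injection. Given any $w \in T_m(\Gamma)$, set
\[ \phi(w) \colonequals w + \mathbbm{1}_M, \]
where $\mathbbm{1}_M$ is the edge weighting that is $1$ on edges of $M$ and $0$ elsewhere. Since $M$ is a perfect matching, exactly one edge of $M$ is incident to each vertex $v$, so the sum of weights of $\phi(w)$ at $v$ equals $m + 1$. Hence $\phi(w) \in T_{m+1}(\Gamma)$. Injectivity is immediate: $\phi(w_1) = \phi(w_2)$ forces $w_1 = w_2$ by subtracting $\mathbbm{1}_M$.

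The one point that requires care is the existence of the perfect matching $M$, and that is precisely why the hypothesis $T_m(\Gamma) \neq \emptyset$ is needed: I do not claim that every bipartite graph $\Gamma$ admits a perfect matching, only that one does whenever some $m$-weighted transversal exists. The rest is bookkeeping, so I expect no further obstacle.
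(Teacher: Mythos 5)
Your proof is correct and follows essentially the same strategy as the paper: extract a perfect matching of $\Gamma$ from the hypothesis that $T_m(\Gamma)\neq\emptyset$, and then add it to every $m$-weighted transversal to produce an injection into $T_{m+1}(\Gamma)$. The only variation is in the extraction step: you pass to the $m$-regular bipartite multigraph $\Gamma_{w_0}$ and invoke the classical fact that regular bipartite multigraphs admit perfect matchings, whereas the paper (in Lemma~\ref{lem: no transversals}) verifies Hall's condition for $\Gamma$ directly by double-counting the total weight on edges incident to a subset $A$ of left vertices; the two routes are morally equivalent, since the multigraph theorem is standardly proved by exactly that counting.
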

Thus when bounding a projective configuration count, we should\footnote{There is a caveat: above we have assumed that before we drop dimension, the set $S^\prime$ includes the marking $n+1$ to be forgotten. If we remove this assumption we can produce graphs different from those which appear after dropping dimension, and a priori these could produce more efficient upper bounds, though we know of no such case.} first reduce dimensions as far as possible using Theorem \ref{thm: dimension reduction}, and only then calculate the upper bound in Theorem \ref{thm: upper bound}.

The proof of Proposition \ref{prop: number of weighted transversals monotonic} has two ingredients: addition of transversals and Hall's marriage theorem. We begin with addition of transversals. Recall that $T_m(\Gamma)$ denotes the set of $m$-weighted transversals of $\Gamma$. Given weighted transversals $T_1 \in T_{m_1}(\Gamma)$ and $T_2 \in T_{m_2}(\Gamma)$ we produce a weighted transversal
\[ T_1 + T_2 \in T_{m_1+m_2}(\Gamma) \]
by summing the two weights associated to each edge. This operation will allow us to relate transversals of different weights.

We now discuss Hall's marriage theorem. Given a subset $A \subseteq V(\Gamma)$ of the left vertices of the bipartite graph, the \textbf{neighbourhood} $N(A) \subseteq V(\Gamma)$ consists of those right vertices which are adjacent to a vertex in $A$. We then have:
\begin{theorem}[Hall's marriage theorem \cite{HallMarriage}] \label{thm: Hall} The bipartite graph $\Gamma$ admits a $1$-weighted transversal (i.e.\ a perfect matching) if and only if
\[ |N(A)| \geqslant |A| \]
for all subsets $A \subseteq V(\Gamma)$ of the left vertices.
\end{theorem}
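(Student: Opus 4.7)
The plan is to prove the nontrivial direction by induction on $n = |V_L|$, the number of left vertices; note that the paper's definition of $T_m(\Gamma)$ implicitly enforces $|V_L| = |V_R|$, which is the regime in which $1$-weighted transversals (perfect matchings) can exist. The forward direction is immediate: given a perfect matching $M$, the map $v \mapsto M(v)$ restricts on any $A \subseteq V_L$ to an injection into $N(A)$, hence $|N(A)| \geqslant |A|$.

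The base case $n = 1$ is trivial: Hall's condition applied to $A = V_L$ forces $|N(V_L)| \geqslant 1$, and the unique available edge is the desired matching. For the inductive step I would split on whether Hall's inequality is everywhere strict on proper nonempty subsets. In the \emph{slack case} (every nonempty proper $A \subsetneq V_L$ satisfies $|N(A)| > |A|$), pick any left vertex $v$ together with any neighbour $w \in N(\{v\})$ and delete both to obtain $\Gamma'$ with $n-1$ vertices per side. For any $A' \subseteq V_L \setminus \{v\}$, the neighbourhood in $\Gamma'$ shrinks by at most one relative to $\Gamma$, so the strict inequality $|N_\Gamma(A')| \geqslant |A'|+1$ propagates to $|N_{\Gamma'}(A')| \geqslant |A'|$; induction then produces a perfect matching of $\Gamma'$, to which the edge $(v,w)$ is appended.

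In the \emph{tight case}, there is a nonempty proper $A \subsetneq V_L$ with $|N(A)| = |A|$. I would apply the inductive hypothesis twice. First, to the induced bipartite graph on $A \cup N(A)$: for $B \subseteq A$ one has $N_\Gamma(B) \subseteq N(A)$, so Hall's condition transfers verbatim, producing a perfect matching $M_1$ of $A$ onto $N(A)$. Second, to the induced graph on $(V_L \setminus A) \cup (V_R \setminus N(A))$: for $B \subseteq V_L \setminus A$, the identity $N(A \cup B) = N(A) \cup N(B)$ combined with the Hall bound $|N(A \cup B)| \geqslant |A| + |B|$ forces $|N(B) \setminus N(A)| \geqslant |B|$, which is precisely Hall's condition in the restricted graph. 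Equal side sizes follow from $|V_L| = |V_R|$ and $|N(A)| = |A|$. Induction yields $M_2$, and $M_1 \cup M_2$ is the sought perfect matching of $\Gamma$.

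The main obstacle, as often happens in this proof, is the tight case: one must be careful to verify that Hall's condition genuinely propagates to \emph{both} induced subgraphs, and in particular that the tightness $|N(A)| = |A|$ is used in exactly the right place to convert the global inequality $|N(A \cup B)| \geqslant |A \cup B|$ into the local one $|N(B) \setminus N(A)| \geqslant |B|$. Alternative approaches via König's theorem or max-flow/min-cut duality are available, but the direct induction above is the shortest self-contained route to the stated equivalence.
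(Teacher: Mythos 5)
The paper does not supply its own proof of this statement: Theorem~\ref{thm: Hall} is invoked as a classical result, with a citation to Hall's original paper, and no argument is given. Your argument is the standard Halmos--Vaughan induction (split on whether Hall's inequality is tight for some proper nonempty subset), and it is correct; in particular your handling of the tight case correctly deduces $|N(B)\setminus N(A)|\geqslant|B|$ from $|N(A\cup B)|\geqslant|A|+|B|$ using $|N(A)|=|A|$, and the side-size bookkeeping for both induced subgraphs checks out. Since the paper offers nothing to compare against, there is no divergence from the authors' approach to flag.
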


\begin{lemma} \label{lem: no transversals} Fix positive integers $m_1,m_2$. Then $\Gamma$ admits an $m_1$-weighted transversal if and only if $\Gamma$ admits an $m_2$-weighted transversal.
\end{lemma}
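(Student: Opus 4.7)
The plan is to reduce the statement to Hall's marriage theorem (\Cref{thm: Hall}), by establishing that the existence of an $m$-weighted transversal of $\Gamma$ is a condition on $\Gamma$ alone, independent of $m \geqslant 1$. Concretely, I will show that for any $m \geqslant 1$, the existence of an $m$-weighted transversal is equivalent to Hall's neighbourhood condition $|N(A)| \geqslant |A|$ for all subsets $A$ of left vertices. Since this condition does not mention $m$, the lemma follows.

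The first direction uses the additive structure on weighted transversals described immediately before the lemma: if $T \in T_1(\Gamma)$ is a perfect matching, then the $m$-fold sum $T + \cdots + T$ belongs to $T_m(\Gamma)$. Thus, if Hall's condition holds then $\Gamma$ has a $1$-weighted transversal by \Cref{thm: Hall}, and by iterated addition it has an $m$-weighted transversal for every $m \geqslant 1$.

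The second direction is a double-counting argument. Suppose $\Gamma$ admits an $m$-weighted transversal $(m_e)_{e \in E(\Gamma)}$ for some $m \geqslant 1$, and let $A$ be any subset of the left vertices. Summing the weights of all edges incident to $A$, on the one hand we obtain $m|A|$ since each vertex in $A$ has total incident weight exactly $m$; on the other hand, every such edge has its right endpoint in $N(A)$, so the same sum is at most $\sum_{w \in N(A)} m = m|N(A)|$. Dividing through by $m \geqslant 1$ gives $|N(A)| \geqslant |A|$, so Hall's condition holds.

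The argument is short and there is no real obstacle; the only subtlety is to notice that by symmetry (using the equality of the numbers of left and right vertices, from \Cref{def: pruned configuration graph}), Hall's condition on the left side is equivalent to Hall's condition on the right side, so that once we have produced a $1$-weighted transversal it is automatically a \emph{perfect} matching. This reduction to the weight-independent combinatorial condition is exactly what is needed to deduce the monotonicity statement \Cref{prop: number of weighted transversals monotonic} invoked in the previous paragraph of the paper, and also underpins the alternative proof of \Cref{prop: nonzero implies surplus} mentioned earlier.
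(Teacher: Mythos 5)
Your proof is correct and takes essentially the same route as the paper's: the forward direction by scaling a perfect matching by $m$, and the converse by the same double-counting verification of Hall's condition followed by an appeal to Hall's marriage theorem (\Cref{thm: Hall}). The closing remark about left/right symmetry is a valid observation, but it is already built into the paper's statement of \Cref{thm: Hall}, which is phrased for bipartite graphs with equal numbers of left and right vertices and directly yields a perfect matching, so it is a sanity check rather than a needed extra step.
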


\begin{proof} It is sufficient to prove the claim with $m_2=1$. Suppose first that $\Gamma$ admits a $1$-weighted transversal $T$. Then $m_1 T$ is an $m_1$-weighted transversal.

Suppose conversely that $\Gamma$ admits an $m_1$-weighted transversal. Fix a subset $A \subseteq V(\Gamma)$ of the left vertices. Summing the weights of the edges adjacent to $A$ gives:
\[ \sum_{v \in A} \sum_{e \ni v} m_e = m_1 |A|. \]
On the other hand, this set of edges is a subset of the set of edges adjacent to $N(A)$. We thus obtain
\[ m_1 |N(A)| = \sum_{v \in N(A)} \sum_{e \ni v} m_e \geqslant \sum_{v \in A} \sum_{e \ni v} m_e = m_1 |A| \]
and so $|N(A)| \geqslant |A|$. By Hall's marriage theorem (Theorem \ref{thm: Hall}), we conclude that $\Gamma$ admits a $1$-weighted transversal.	
\end{proof}

\begin{proof}[Proof of Proposition \ref{prop: number of weighted transversals monotonic}] We first examine the case where $\Gamma$ admits a $1$-weighted transversal $T_1$. Consider the map:
\begin{align*} T_m(\Gamma) & \to T_{m+1}(\Gamma) \\
T & \mapsto T+T_1.	
\end{align*}
This map is clearly injective, which establishes the desired inequality. On the other hand if $\Gamma$ does not admit a $1$-weighted transversal, then by Lemma \ref{lem: no transversals} we have
\[ |T_m(\Gamma)| = |T_{m+1}(\Gamma)| = 0 \]
and so the inequality is trivially satisfied.	
\end{proof}

\begin{example} Fix $r=3, n=7$ so that $k=n-r-1=3$ and consider the input data:
\[ \Ical^\prime = ( \{1,2,3,4,7\},  \{3,4,5,6,7\}, \{1,2,5,6,7\} ). \]
Since $7$ belongs to every subset, we may remove it and produce the $r=2, n=6$ input data:
\[ \Ical = ( \{1,2,3,4\}, \{3,4,5,6\}, \{1,2,5,6\} ).\]
The corresponding configuration count was computed in \cite[Example~1.3]{Silversmith2022}. We have:
\begin{equation} \label{eqn: dimension reduction example count} d_{3,7}(\Ical^\prime) = d_{2,6}(\Ical) = 2.\end{equation}
We now investigate the upper bounds. Taking $S^\prime = \{4,5,6,7\}$ and $S=\{4,5,6\}$ ensures that the pruned configuration graphs agree. We let $\Gamma = \Gamma(\Ical^\prime)\setminus S^\prime = \Gamma(\Ical)\!\setminus\!S$ denote this graph:
\[
\begin{tikzpicture}
	  % Left-hand side
    \draw[fill=black] (0,0) circle[radius=2pt];
    \draw (0,0) node[left]{$1$};

    \draw[fill=black] (0,-1) circle[radius=2pt];
     \draw (0,-1) node[left]{$2$};
     
      \draw[fill=black] (0,-2) circle[radius=2pt];
     \draw (0,-2) node[left]{$3$};

     % Right-hand side
    \draw[fill=black] (3,0) circle[radius=2pt];
    \draw (3,0) node[right]{$I_1$};
    
    \draw[fill=black] (3,-1) circle[radius=2pt];
    \draw (3,-1) node[right]{$I_2$};

    \draw[fill=black] (3,-2) circle[radius=2pt];
    \draw (3,-2) node[right]{$I_3$};

	% Edges
    \draw (0,0) -- (3,0);
    \draw (0,0) to [out=90,in=0] (-0.5,0.5) to [out=180,in=90] (-1.2,-1.5) to [out=270,in=180] (-0.5,-3.4) to [out=0,in=180] (2.5,-3.4) to [out=0,in=270] (3,-2);
    \draw (0,-1) -- (3,0);
    \draw (0,-1) to [out=90,in=0] (-0.25,-0.5) to [out=180,in=90] (-0.7,-1.5) to [out=270,in=180] (-0.2,-2.75) to [out=0, in=225] (3,-2);
    \draw (0,-2) -- (3,0);
    \draw (0,-2) -- (3,-1);
\end{tikzpicture}
\]
By Theorem \ref{thm: upper bound} the configuration count \eqref{eqn: dimension reduction example count} is bounded above by both the number of $1$-weighted transversals and the number of $2$-weighted transversals of $\Gamma$. There are two $1$-weighted transversals
\[
\begin{tikzpicture}
	% FIRST 1-WEIGHTED TRANSVERSAL
	% Left-hand side
    \draw[fill=black] (0,0) circle[radius=2pt];
    \draw (0,0) node[left]{$1$};

    \draw[fill=black] (0,-1) circle[radius=2pt];
     \draw (0,-1) node[left]{$2$};
     
      \draw[fill=black] (0,-2) circle[radius=2pt];
     \draw (0,-2) node[left]{$3$};

     % Right-hand side
    \draw[fill=black] (3,0) circle[radius=2pt];
    \draw (3,0) node[right]{$I_1$};
    
    \draw[fill=black] (3,-1) circle[radius=2pt];
    \draw (3,-1) node[right]{$I_2$};

    \draw[fill=black] (3,-2) circle[radius=2pt];
    \draw (3,-2) node[right]{$I_3$};

	% Edges
    \draw (0,0) -- (3,0);
    \draw (1.5,-0.05) node[above,blue]{\small$1$};

    \draw (0,0) to [out=90,in=0] (-0.5,0.5) to [out=180,in=90] (-1.2,-1.5) to [out=270,in=180] (-0.5,-3.4) to [out=0,in=180] (2.5,-3.4) to [out=0,in=270] (3,-2);
    \draw (1.5,-3.2) node[blue]{\small$0$};
    
    \draw (0,-1) -- (3,0);
    \draw (0.95,-0.5) node[blue]{\small$0$};
    
    \draw (0,-1) to [out=90,in=0] (-0.25,-0.5) to [out=180,in=90] (-0.7,-1.5) to [out=270,in=180] (-0.2,-2.75) to [out=0, in=225] (3,-2);
    \draw (1.5,-2.55) node[blue]{\small$1$};

    \draw (0,-2) -- (3,0);
    \draw (2.05,-0.85) node[blue]{\small$0$};
    
    \draw (0,-2) -- (3,-1);
    \draw (1.5,-1.7) node[blue]{\small$1$};
    
    % SECOND 1-WEIGHTED TRANSVERSAL
	% Left-hand side
    \draw[fill=black] (6,0) circle[radius=2pt];
    \draw (6,0) node[left]{$1$};

    \draw[fill=black] (6,-1) circle[radius=2pt];
     \draw (6,-1) node[left]{$2$};
     
      \draw[fill=black] (6,-2) circle[radius=2pt];
     \draw (6,-2) node[left]{$3$};

     % Right-hand side
    \draw[fill=black] (9,0) circle[radius=2pt];
    \draw (9,0) node[right]{$I_1$};
    
    \draw[fill=black] (9,-1) circle[radius=2pt];
    \draw (9,-1) node[right]{$I_2$};

    \draw[fill=black] (9,-2) circle[radius=2pt];
    \draw (9,-2) node[right]{$I_3$};

	% Edges
    \draw (6,0) -- (9,0);
    \draw (7.5,-0.05) node[above,blue]{\small$0$};

    \draw (6,0) to [out=90,in=0] (5.5,0.5) to [out=180,in=90] (4.8,-1.5) to [out=270,in=180] (5.5,-3.4) to [out=0,in=180] (8.5,-3.4) to [out=0,in=270] (9,-2);
    \draw (7.5,-3.2) node[blue]{\small$1$};
    
    \draw (6,-1) -- (9,0);
    \draw (6.95,-0.5) node[blue]{\small$1$};
    
    \draw (6,-1) to [out=90,in=0] (5.75,-0.5) to [out=180,in=90] (5.3,-1.5) to [out=270,in=180] (5.8,-2.75) to [out=0, in=225] (9,-2);
    \draw (7.5,-2.55) node[blue]{\small$0$};

    \draw (6,-2) -- (9,0);
    \draw (8.05,-0.85) node[blue]{\small$0$};
    
    \draw (6,-2) -- (9,-1);
    \draw (7.5,-1.7) node[blue]{\small$1$};
    
\end{tikzpicture}
\]
whereas there are three $2$-weighted transversals:
\[
\begin{tikzpicture}
	% FIRST 2-WEIGHTED TRANSVERSAL
	% Left-hand side
    \draw[fill=black] (0,0) circle[radius=2pt];
    \draw (0,0) node[left]{$1$};

    \draw[fill=black] (0,-1) circle[radius=2pt];
     \draw (0,-1) node[left]{$2$};
     
      \draw[fill=black] (0,-2) circle[radius=2pt];
     \draw (0,-2) node[left]{$3$};

     % Right-hand side
    \draw[fill=black] (3,0) circle[radius=2pt];
    \draw (3,0) node[right]{$I_1$};
    
    \draw[fill=black] (3,-1) circle[radius=2pt];
    \draw (3,-1) node[right]{$I_2$};

    \draw[fill=black] (3,-2) circle[radius=2pt];
    \draw (3,-2) node[right]{$I_3$};

	% Edges
    \draw (0,0) -- (3,0);
    \draw (1.5,-0.05) node[above,blue]{\small$2$};

    \draw (0,0) to [out=90,in=0] (-0.5,0.5) to [out=180,in=90] (-1.2,-1.5) to [out=270,in=180] (-0.5,-3.4) to [out=0,in=180] (2.5,-3.4) to [out=0,in=270] (3,-2);
    \draw (1.5,-3.2) node[blue]{\small$0$};
    
    \draw (0,-1) -- (3,0);
    \draw (0.95,-0.5) node[blue]{\small$0$};
    
    \draw (0,-1) to [out=90,in=0] (-0.25,-0.5) to [out=180,in=90] (-0.7,-1.5) to [out=270,in=180] (-0.2,-2.75) to [out=0, in=225] (3,-2);
    \draw (1.5,-2.55) node[blue]{\small$2$};

    \draw (0,-2) -- (3,0);
    \draw (2.05,-0.85) node[blue]{\small$0$};
    
    \draw (0,-2) -- (3,-1);
    \draw (1.5,-1.7) node[blue]{\small$2$};
    
    % SECOND 2-WEIGHTED TRANSVERSAL
	% Left-hand side
    \draw[fill=black] (6,0) circle[radius=2pt];
    \draw (6,0) node[left]{$1$};

    \draw[fill=black] (6,-1) circle[radius=2pt];
     \draw (6,-1) node[left]{$2$};
     
      \draw[fill=black] (6,-2) circle[radius=2pt];
     \draw (6,-2) node[left]{$3$};

     % Right-hand side
    \draw[fill=black] (9,0) circle[radius=2pt];
    \draw (9,0) node[right]{$I_1$};
    
    \draw[fill=black] (9,-1) circle[radius=2pt];
    \draw (9,-1) node[right]{$I_2$};

    \draw[fill=black] (9,-2) circle[radius=2pt];
    \draw (9,-2) node[right]{$I_3$};

	% Edges
    \draw (6,0) -- (9,0);
    \draw (7.5,-0.05) node[above,blue]{\small$0$};

    \draw (6,0) to [out=90,in=0] (5.5,0.5) to [out=180,in=90] (4.8,-1.5) to [out=270,in=180] (5.5,-3.4) to [out=0,in=180] (8.5,-3.4) to [out=0,in=270] (9,-2);
    \draw (7.5,-3.2) node[blue]{\small$2$};
    
    \draw (6,-1) -- (9,0);
    \draw (6.95,-0.5) node[blue]{\small$2$};
    
    \draw (6,-1) to [out=90,in=0] (5.75,-0.5) to [out=180,in=90] (5.3,-1.5) to [out=270,in=180] (5.8,-2.75) to [out=0, in=225] (9,-2);
    \draw (7.5,-2.55) node[blue]{\small$0$};

    \draw (6,-2) -- (9,0);
    \draw (8.05,-0.85) node[blue]{\small$0$};
    
    \draw (6,-2) -- (9,-1);
    \draw (7.5,-1.7) node[blue]{\small$2$};
    
    % THIRD 2-WEIGHTED TRANSVERSAL
	% Left-hand side
    \draw[fill=black] (12,0) circle[radius=2pt];
    \draw (12,0) node[left]{$1$};

    \draw[fill=black] (12,-1) circle[radius=2pt];
     \draw (12,-1) node[left]{$2$};
     
      \draw[fill=black] (12,-2) circle[radius=2pt];
     \draw (12,-2) node[left]{$3$};

     % Right-hand side
    \draw[fill=black] (15,0) circle[radius=2pt];
    \draw (15,0) node[right]{$I_1$};
    
    \draw[fill=black] (15,-1) circle[radius=2pt];
    \draw (15,-1) node[right]{$I_2$};

    \draw[fill=black] (15,-2) circle[radius=2pt];
    \draw (15,-2) node[right]{$I_3$};

	% Edges
    \draw (12,0) -- (15,0);
    \draw (13.5,-0.05) node[above,blue]{\small$1$};

    \draw (12,0) to [out=90,in=0] (11.5,0.5) to [out=180,in=90] (10.8,-1.5) to [out=270,in=180] (11.5,-3.4) to [out=0,in=180] (14.5,-3.4) to [out=0,in=270] (15,-2);
    \draw (13.5,-3.2) node[blue]{\small$1$};
    
    \draw (12,-1) -- (15,0);
    \draw (12.95,-0.5) node[blue]{\small$1$};
    
    \draw (12,-1) to [out=90,in=0] (11.75,-0.5) to [out=180,in=90] (11.3,-1.5) to [out=270,in=180] (11.8,-2.75) to [out=0, in=225] (15,-2);
    \draw (13.5,-2.55) node[blue]{\small$1$};

    \draw (12,-2) -- (15,0);
    \draw (14.05,-0.85) node[blue]{\small$0$};
    
    \draw (12,-2) -- (15,-1);
    \draw (13.5,-1.7) node[blue]{\small$2$};
    
\end{tikzpicture}
\]
This demonstrates that the inequality in Proposition \ref{prop: number of weighted transversals monotonic} can be strict. The $1$-weighted transversals give a more efficient (in this case, sharp) upper bound. In fact, \emph{no} choice of $S'$ gives a sharp upper bound in this case (Section \ref{sec: tables}).
\end{example}

\footnotesize
\bibliographystyle{alpha}
\bibliography{ConfigurationCounts.bbl}\medskip

\noindent Alex Fink: Queen Mary University of London, UK. \href{mailto:a.fink@qmul.ac.uk}{a.fink@qmul.ac.uk} \\
\noindent Navid Nabijou: Queen Mary University of London, UK. \href{mailto:n.nabijou@qmul.ac.uk}{n.nabijou@qmul.ac.uk}\\
\noindent Rob Silversmith: Emory University, USA. \href{mailto:rob.silversmith@emory.edu}{rob.silversmith@emory.edu} 

\end{document}